\documentclass[12pt]{amsart}

\usepackage{upref,amssymb,bm,mathrsfs}
\usepackage{enumerate}
\usepackage{mathtools}
\usepackage{tikz}
\usepackage{comment}
\usepackage[centering,width=6in,truedimen]{geometry}
\geometry{a4paper,height=9in}
\parskip.5ex plus.1ex
\linespread{1.1}


\usepackage[colorlinks,pagebackref,pdfstartview=FitH]{hyperref}

\hypersetup{
    colorlinks,
    citecolor=blue,
    filecolor=black,
    linkcolor=red,
    urlcolor=black
}

\usepackage{amsmath}        
\usepackage{mathabx}        
\usepackage{amsfonts}       
\usepackage{amssymb}        
\usepackage{mathrsfs}       
\usepackage{graphicx}       
\usepackage{color}          
\usepackage{cancel}         
\usepackage{soul}           
\errorcontextlines=0 \numberwithin{equation}{section} 
\theoremstyle{plain}

\newcommand{\Conv}{\mathop{\scalebox{1.5}{\raisebox{-0.2ex}{$\Asterisk$}}}}%

\newtheorem{thm}{Theorem}[section]
\newtheorem{lem}[thm]{Lemma}
\newtheorem{pro}[thm]{Proposition}
\newtheorem{cor}[thm]{Corollary}

\newtheorem{de}[thm]{Definition}

\def\R {{\Bbb R}}
\def\N {{\Bbb N}}
\def\Z {{\Bbb Z}}

\def\F {{\mathcal F}}

\def\A {{\mathcal A}}

\def\D {{\mathcal D}}



\newcommand{\ord}{{\mathsf{ord}}}


\def\dimH{\dim_{\rm H}}

\begin{document}
\baselineskip 15pt

\baselineskip 13.7pt
\title[A full-dimensional absolutely normal set of uniqueness]{On Constructions of  full-dimensional\\ absolutely normal sets of uniqueness}

\author{Chun-Kit Lai}
\address{
Department of Mathematics\\
San Francisco State University\\
1600 Holloway Avenue, San `Francisco, CA 94132, USA\\
}
\email{cklai@sfsu.edu}

\author{Yu-Hao Xie}
\address{
Department of Mathematics\\
The Chinese University of Hong Kong\\
Shatin,  Hong Kong\\
}
\email{yhxie@math.cuhk.edu.hk}
\keywords{}

\date{}

\begin{abstract}
     
 We construct a class of homogeneous Cantor-Moran measures with all contraction ratios being  reciprocal of integers, and prove that they are pointwise absolutely normal. Our approach relies on methods developed by Davenport, Erd{\H{o}}s and LeVeque \cite{DEL1963} and properties of order of integers in the multiplicative groups. The construction of these measures differs from the class of pointwise absolutely normal self-similar measures introduced by Hochman and Shmerkin \cite{Hochman2015}, in which dynamical approaches were used. 
     
     As an application, for all gauge functions $\varphi(r)$ with $r/\varphi(r)\to 0$ as $r\to 0$, we obtain a set of uniqueness $K$ with ${\mathcal H}^{\varphi}(K)>0$. Moreover, we show that there exists a pointwise absolutely normal measure $ \mu $ of dimension one fully supported on $ K $.  The result demonstrates that having a lot of absolutely normal numbers in a Cantor set, even with dimension one, cannot guarantee that it supports a measure with Fourier decay. It also shows that the ${\mathsf{DEL}}$ criterion being satisfied for all integers does not guarantee any Fourier decay nor the supporting set is a set of multiplicity. 
\end{abstract}

\maketitle

\section{Introduction}
\label{S1}

 \subsection{Sets of uniqueness.} Riemann theory of trigonometric functions was initiated in the mid-19th century and one of the goals of Riemann was to study whether a trigonometric series vanishes identically on an interval implies that all coefficients vanish.  This was answered by George Cantor in a strong way and he introduced the concept of the set of uniqueness that ultimately motivated him towards the introduction of the set theory \cite[p. 2]{Kechris}.  A set $ E \subset [0,1] $ is called a \textbf{set of uniqueness} if 
$$ \sum_{n=0}^{ \infty }( a_{n}\cos( 2\pi nx ) + b_n \sin(2\pi nx) ) = 0, $$
for all $ x\in [0,1] \setminus E $ implies that $ a_n = b_n = 0 $ for each $ n $.  If $ E $ is not a set of uniqueness, then we call $ E $ a \textbf{set of multiplicity}.   Nowadays, it is still an active research area in classifying such sets.   Readers can refer to the books \cite{Kechris,Salem1963,Zygmund2003,Meyer} for the classical theory about sets of uniqueness. 

A set of uniqueness must have Lebesgue measure zero and all countable sets are set of uniqueness. On the other hand, there are sets of uniqueness that are perfect sets of fractional dimensions. Classical work of Salem and Zygmund \cite{Salem1943} (see also \cite[Chapter III]{Kechris}) showed that symmetric Cantor sets with constant dissection ratio $\xi$ is a set of uniqueness if and only if $\xi^{-1}$ is a Pisot number. Recall that a $\theta$ is a {\bf Pisot number} if and only if it is an algebraic integer whose Galois conjugates have modulus strictly less than 1. 

The study of sets of uniqueness is closely related to the existence of measures without Fourier decay.  In this paper, \textit{Fourier transform} $ \widehat{\nu} $ of a Borel probability measure $ \nu $ is  defined by 
$$ \widehat{\nu}(\xi) = \int_{0}^{1} e^{ - 2  \pi i  \xi x } d \nu(x)  .$$
If $ \widehat{\nu}(\xi) \rightarrow 0 $ as $ \vert \xi \vert $ tends to infinity, then $ \nu $ is called a \textbf {Rajchman measure}.  Set of uniqueness does not support any Rajchman measures (See e.g. \cite[p.348 Theorem 6.13]{Zygmund2003}). Therefore, we know the following implication holds:
\begin{equation}\label{R implies M}
E \ \mbox{ supports a Rajchman measure} \ \Longrightarrow  \ E \ \mbox{ is a set of multiplicity}.
\end{equation}

\subsection{Pointwise absolute normality of a measure} Another closely related concept in our paper is the normality of numbers in the set.   Let $ \{ x_n \}_{ n\ge 1 } $ be a sequence of real numbers contained in the unit interval $ [0,1] $. Let $ \nu $ be a Borel probability measure on $ [0,1] $.  We say $ \{ x_n \}_{ n\ge 1 } $ is \textbf{equidistributed} in the interval $ [0,1] $ with respect to the  measure $ \nu $, if for any real numbers $ 0\le a < b \le 1 $,
$$ \lim_{N \rightarrow \infty} \dfrac{ \# \{ 1 \le n \le N: x_n \in [a,b] \} }{ N } = \nu ([a,b]), $$
(see the definition in e.g. \cite[p.1]{Bugeaud2012}). We say that $x\in\R$ is {\bf $b$-normal} if the sequence of fractional parts $\{b^k x\}$ is equidistributed with respect to the Lebesgue measure. This is equivalent to saying that when $x$ is expanded in $b$-ary digit expansions under the bases $b$, every digit from $\{0,\cdots, b-1\}$ appears in the same frequency $1/b$.    The number $ x $ is referred to as \textbf{absolutely normal} if it is $ b $-normal for all integers $ b\ge 2 $. We will denote by ${\mathbb A}$ to be the set of all absolutely normal numbers in $\R$. Note that ${\mathbb A}$ is a Borel set (see e.g. \cite{Borel-Normal2014}). 

In 1909, Borel famously proved that Lebesgue almost all $ x $ are absolutely normal. More generally, following the terminologies in \cite{Hochman2015,Algom2022},  we call a Borel probability measure $ \mu $ is \textbf {pointwise absolutely normal} if $ \mu $ almost all $ x $ are absolutely normal. In other words, $\mu$ is supported on a Borel set only consisting of absolutely normal numbers. In particular, Lebesgue measure on $[0,1]$ is pointwise absolutely normal.

Measures with a slightly faster Fourier decay must be pointwise absolutely normal  via the following theorem by Davenport, Erd\H{o}s and LeVeque.

\begin{thm}[\cite{DEL1963}] \label{DEL} 
    Let $\mu$ be a Borel probability measure on the real line and $ b \ge 2 $ an integer. Suppose for all non-zero integer $ h $, 
    \begin{equation}\label{key criterion}
      \sum_{ N = 1 }^{ \infty } \frac{1}{N^3} \sum_{ m=0 }^{ N-1 }\sum_{ n=0 }^{ N-1 } \widehat{ \mu }(  h(b^{n} - b^{m} ) ) < \infty, 
    \end{equation}
    then for $ \mu $-a.e.~$ x $, $ x $ is normal in base $ b $.
\end{thm}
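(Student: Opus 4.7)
The plan is to invoke Weyl's equidistribution criterion: $x$ is $b$-normal if and only if, for every nonzero integer $h$,
\[
\frac{1}{N}\sum_{n=0}^{N-1} e^{2\pi \ii h b^n x} \longrightarrow 0 \quad \text{as } N \to \infty.
\]
Since there are only countably many such $h$, it suffices to prove that for each fixed nonzero $h \in \Z$ the above limit holds on a set of full $\mu$-measure; intersecting over $h$ then yields $b$-normality $\mu$-a.e.

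For fixed $h$, set $S_N(x) = \sum_{n=0}^{N-1} e^{2\pi \ii h b^n x}$. I would first expand
\[
|S_N(x)|^2 = \sum_{m,n=0}^{N-1} e^{2\pi \ii h(b^n-b^m)x}
\]
and integrate against $\mu$. Fubini gives
\[
\int |S_N(x)|^2\, d\mu(x) = \sum_{m,n=0}^{N-1}\widehat{\mu}(-h(b^n-b^m)).
\]
The sum on the right is real (its left-hand side is), and the symmetry $(m,n)\leftrightarrow (n,m)$ together with $\widehat{\mu}(-\xi)=\overline{\widehat{\mu}(\xi)}$ identifies it, up to sign of $h$, with the double sum in hypothesis \eqref{key criterion}. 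The hypothesis then yields
\[
\sum_{N=1}^{\infty} \frac{1}{N^3} \int |S_N(x)|^2 \, d\mu(x) < \infty,
\]
and Tonelli converts this to $\sum_{N\ge 1} |S_N(x)|^2/N^3 < \infty$ for $\mu$-a.e. $x$.

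The crux is then a real-variable Tauberian lemma (the original Davenport--Erd\H{o}s--LeVeque lemma): if $(a_n)$ is a sequence of complex numbers with $|a_n|\le 1$ and its partial sums $T_N = a_0+\cdots+a_{N-1}$ satisfy $\sum_N |T_N|^2/N^3<\infty$, then $T_N/N \to 0$. The argument is a blocking one: if $|T_{N_k}|\ge \epsilon N_k$ along some subsequence $N_k\to\infty$, then $|a_n|\le 1$ forces $|T_N - T_{N_k}|\le |N-N_k|$, so $|T_N|\ge \tfrac{\epsilon}{2}N_k$ throughout a block of length of order $\epsilon N_k$ starting at $N_k$; this block contributes at least a constant multiple of $\epsilon^3$ to the series, and passing to a sparse subsequence of pairwise disjoint blocks contradicts convergence. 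Applying the lemma with $a_n = e^{2\pi \ii h b^n x}$ for those $x$ produced in the previous step finishes the proof. I expect the main obstacle to lie precisely in this Tauberian step, since the rest is a routine combination of Weyl's criterion, Fubini, and the computation $\int|S_N|^2\,d\mu$; the secondary care point is the bookkeeping of signs to align the computed $L^2$ norm with the hypothesis \eqref{key criterion}.
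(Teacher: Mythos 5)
The paper itself does not prove Theorem \ref{DEL}; it is quoted directly from Davenport, Erd\H{o}s and LeVeque \cite{DEL1963}, so there is no internal argument to compare yours against. Your proof is the standard argument for this theorem and it is correct. The reduction via Weyl's criterion (with a countable intersection over $h\neq 0$) matches the paper's definition of $b$-normality as equidistribution of $\{b^k x\}$; the computation $\int |S_N|^2\,d\mu=\sum_{m,n=0}^{N-1}\widehat{\mu}\bigl(-h(b^n-b^m)\bigr)$ together with the symmetry $(m,n)\leftrightarrow(n,m)$ (equivalently, applying the hypothesis to $-h$) identifies the inner double sum in \eqref{key criterion} with $\int |S_N|^2\,d\mu$, which in particular shows that this double sum is automatically real and nonnegative even though \eqref{key criterion} is stated without absolute values --- this is exactly what licenses your use of Tonelli to get $\sum_N |S_N(x)|^2/N^3<\infty$ for $\mu$-a.e.\ $x$, and it is worth saying explicitly. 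Your blocking proof of the Tauberian lemma is also complete and correct: if $|T_{N_k}|\ge \varepsilon N_k$ along a sparse subsequence (say $N_{k+1}>2N_k$), then $|a_n|\le 1$ gives $|T_N|\ge \varepsilon N_k/2$ for all $N$ in the block $[N_k,(1+\varepsilon/2)N_k]$, and each such disjoint block contributes at least a fixed constant of order $\varepsilon^3$ to $\sum_N |T_N|^2/N^3$, contradicting convergence; hence $T_N/N\to 0$. So the proposal is a sound, self-contained proof of the cited criterion.
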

We will call (\ref{key criterion}) the {\bf $\mathsf{DEL}$ criterion} throughout the paper.  
From a standard estimate (see e.g. \cite{pollington2022}), if a Rajchman measure $\mu$ admits a Fourier decay of double poly-logarithmic order with power slightly larger than 1, i.e. 
\begin{equation}\label{eq-log-deacy}
    |\widehat{\mu}(\xi)| =O ((\log\log|\xi|)^{-1-\varepsilon}),
\end{equation}
then $\mathsf{DEL}$ criterion is satisfied and $\mu$ is pointwise absolutely normal. Based on the above discussions, there are four statements for a set $E$ we are considering:
\begin{enumerate}
    \item[(A).] $E$ is a set of multiplicity.
    \item[(B).]  $E$ supports a pointwise absolutely normal measure.
    \item[(C).]  $E$ supports a measure with $\mathsf{DEL}$ criterion for all integers $b\ge2$.
    \item[(D).] $E$ supports a Rajchman measure with double poly-logarithmic decay in (\ref{eq-log-deacy}). 
\end{enumerate}
From Theorem \ref{DEL}, we know that $(D)\Longrightarrow(C)\Longrightarrow(B)$ and $(D)\Longrightarrow(A)$ (c.f. Equation (\ref{R implies M})). It raises a natural question to see if the implications can be reversed and if there are any implication relationship between $(A)$ and $(B)$. Indeed, there is none between $(A)$ and $(B)$.  Recently, Pramanik and Zhang \cite{MZ2025,MalabikaZhang2024} showed that sets of points that are not normal to odd numbers, but it is normal to  bases of even numbers  supports a Rajchman measure. Hence, this set is a set of multiplicity.  Their results also generalize to all choices of bases as well under the multiplicative independence assumption. Hence, $(A)$ does not imply $(B)$.
Here, we say two real numbers $ a,b \neq 0 $ are \textbf{multiplicatively independent} if the ratio $ { (\log \vert a \vert) }/{ (\log \vert b \vert) } \notin \mathbb{Q} $.

$(B)$ does not imply $(A)$ either since we can pick a point $x$ that is absolutely normal, then  $\{x\}$ is a set of uniqueness and it trivially supports a pointwise absolutely normal measure (namely the Dirac mass at $x$). We are interested in the following definition and see if there could be more non-trivial examples:
\begin{de}
 A compact set $E$ is called an {\bf absolutely normal set of uniqueness} if $E$ is a set of uniqueness, ${\mathbb A}$ is dense in $E$ and 
$$
\dim_{\textup{H}}(E) = \dim_\textup{H}(E \cap {\mathbb A}).
$$
\end{de}
 Here, $\dimH$  denotes the Hausdorff dimension.  The following question was first brought to the first-named author by Pramanik in 2024\footnote{Private communication}. 

\smallskip

\noindent {\bf (Qu):} How large can an absolutely normal set of uniqueness be in terms of Hausdorff dimension?  

\smallskip

We know that the set must have Lebesgue measure zero and it trivially exists. Intuitively, such sets appear to be difficult to construct, but it exists using a result by Hochman and Shmerkin, by involving some scales of irrational Pisot numbers (see the Subsection \ref{sub-sec-ssUset}). The standard middle-third Cantor set cannot be an absolutely normal set of uniqueness as none of the points can be $3$-normal. However,  Cassels \cite{Cassels1959} and Schmidt \cite{Schmidt1960} independently utilized Theorem \ref{DEL}  to show that if $ \mu $ is the standard Cantor-Lebesgue measure on the middle-third Cantor set, then for $ \mu $ almost every $ x $, $ x $ is $ b $-normal whenever $ b $ satisfies $ \frac{\log b}{\log 3} \notin \mathbb{Q}.$  Feldman and Smorosinsky \cite{Feldman1992} generalized Cassels and Schmidt's results to all non-degenerate Cantor-Lebesgue measures. These results showed that satisfying the $\mathsf{DEL}$ condition (\ref{key criterion}) for some integer $b$ does not imply there is some uniform Fourier decay.

\subsection{Main Results and Contributions}  In this paper, we will first generalize the result of Cassels by constructing absolutely normal set of uniqueness via a class of Cantor sets using Moran construction with all contraction ratios being reciprocals of integers. Some special Moran sets of uniqueness have been studied in classical books such as \cite{Meyer} and \cite{Kechris}. The existence of pointwise absolutely normal Cantor-Moran measure (with reciprocals of integer contraction ratios) appears to be new.  To put our results into context, let us begin our setup.

Let $ \{ M_n \}_{n\ge 1} $ be a sequence of positive integers larger than or equal to  $2$, and let $ \mathcal{D}_{n} $ be a subset of the set $ \{ 0,1,\cdots, M_n - 1 \}$ for each $ n \ge 1 $. The {\bf Moran set} associated with $ \{ M_n \}_{n\ge 1} $ and $ \{ \mathcal{D}_n \}_{n\ge 1} $ is the Cantor set 
\begin{equation}\label{eq_Moran}
K = K_{M_n,\D_n} = \left\{ \sum_{ n = 1 }^{ \infty } \dfrac{ d_n }{ M_1 \cdots M_n }: d_n \in \mathcal{D}_n \textup{ for all }  n  \right\} .
\end{equation}
Here $\{ M_n \}_{n\ge 1}$ are called the mixed radix bases and $\{\mathcal{D}_n\}_{n\ge 1}$ the digit sets.
Let $ \{ \mu_1, \cdots, \mu_n \} $ be a family of Borel probability measures on $ \mathbb{R} $. The \textbf{convolution} $ \Conv_{ i = 1 }^{ n } \mu_i $ of these measures $ \mu_1, \cdots, \mu_n $ is defined as follows: for any Borel set $ E \subset \mathbb{R} $, we have
$$  \left(\Conv_{ i = 1 }^{ n } \mu_i \right)(E) = \int_{\mathbb{R}} \cdots \int_{\mathbb{R}} \chi_{E}( x_1 + \cdots + x_n ) d\mu_1(x_1) \cdots d\mu_{n}(x_n),  $$ where $ \chi_E $ denotes the characteristic function of $ E $. Moran set (\ref{eq_Moran}) naturally supports a \textbf{Cantor-Moran measure} $ \mu $ defined by the following infinite convolution 
\begin{equation}\label{cantor-moran_measure}
    \mu = \Conv_{ n = 1 }^{ \infty } \left( \sum_{ d \in \mathcal{D}_n } \dfrac{1}{ \# \mathcal{D}_n }\delta_{ d \cdot ( M_1 \cdots M_n )^{-1} } \right),
\end{equation} 
where $ \delta_a $ represents the Dirac measure concentrated at the point $ a\in \mathbb{R} $.  Note that the Cantor-Moran measure is reduced to the self-similar measure if all $M_n$ are equal to same integer and all $\D_n$ are equal to the same digit set. When all $\#\D_n=2$, the Cantor set is reduced to the symmetric Cantor set with sequence of dissection ratios equal to $\{M_n^{-1}\}_{n=1}^{\infty}$ (which was the setting in \cite{Kechris,Meyer}).   We can  readily express its Fourier transform as an infinite product of trigonometric polynomials. 
 \begin{equation}\label{eq-FT}
 \widehat{\mu} (\xi) = \prod_{n=1}^{\infty} {\mathsf M}_{\D_n} ((M_1\cdots M_n)^{-1}\xi), \ \mbox{where} \ {\mathsf M}_{\D_n} (t) = \frac1{\#\D_n}\sum_{d\in\D_n} e^{-2\pi id t}. 
 \end{equation}

Our first main result is the following:

\begin{thm}\label{main-theorem00}
   Let $\D_n = \{0,1\}$ for all $n\ge 1$. Then  there exists a sequence of $\{M_n\}_{n=1}^{\infty}$ such that the Cantor-Moran measure $\mu$ in (\ref{cantor-moran_measure}) satisfies the $\mathsf{DEL}$ criterion for all integers $ b \ge 2 $ and hence is a pointwise absolutely normal measure. Moreover, the Moran set (\ref{eq_Moran}) is a set of uniqueness. 
\end{thm}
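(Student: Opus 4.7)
The plan is to build $\{M_n\}$ inductively so that (i) the Moran set $K$ is a set of uniqueness and (ii) $\mu$ satisfies the $\mathsf{DEL}$ criterion for every integer base $b\ge 2$. Since $\D_n=\{0,1\}$, formula (\ref{eq-FT}) simplifies to
$$
|\widehat\mu(\xi)| = \prod_{j=1}^{\infty} |\cos(\pi\xi/P_j)|, \qquad P_j = M_1 M_2\cdots M_j,
$$
so both goals become arithmetic statements about how $h(b^n-b^m)$ sits relative to the moduli $P_j$.

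Part (i) can be handled by the mild requirement $M_n\ge 3$ for every $n$. Indeed, for $x=\sum_{k\ge 1} d_k/P_k \in K$ with $d_k\in\{0,1\}$, one computes $\{P_n x\} = \sum_{k>n} d_k/(M_{n+1}\cdots M_k) \le \sum_{\ell\ge 1} M_{n+1}^{-\ell} \le \tfrac{1}{2}$, so along the dilations $n_k=P_k$, the set $K$ avoids the arc $(\tfrac{1}{2},1) \pmod 1$, exhibiting $K$ as an $H$-set. By the classical theorem that every $H$-set is a set of uniqueness \cite[Ch.~III]{Kechris}, (i) is secured.

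The bulk of the work is (ii). Fix $b\ge 2$ and $h\ne 0$. The diagonal $m=n$ contributes only $\sum_N N^{-2}<\infty$, so the task is to show $\sum_N N^{-3}\sum_{0\le m<n<N}|\widehat\mu(h(b^n-b^m))|<\infty$. Using the elementary estimate $|\cos(\pi t)|\le \exp(-2\|t\|^2)$ (with $\|\cdot\|$ the distance to $\Z$), it is enough to produce a polynomial tail $|\widehat\mu(h(b^n-b^m))|\lesssim_{b,h}(n-m)^{-2}$, since then the triple sum is $\lesssim \sum_N N^{-2}<\infty$. To manufacture this decay I would take each $M_n$ to be a large prime coprime to every $b\le n$ such that the multiplicative order $\ord_{M_n}(b)$ is at least some $L_n\to\infty$; such primes exist in abundance by a standard counting argument, because primes $p$ with $\ord_p(b)\le L$ must divide $\prod_{d\le L}(b^d-1)=b^{O(L^2)}$, giving a bounded ``bad'' set for each $b\le n$. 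With such $M_n$ in hand, for $\xi=h(b^n-b^m)=hb^m(b^{n-m}-1)$ I examine the window of indices $j$ for which $P_j$ sweeps the scale of $\xi$: the order condition forces $b^{n-m}\pmod{M_j}$ to be far from $1$ for a positive proportion of these $j$, yielding $\sum_j\|\xi/P_j\|^2 \gtrsim_{b,h} \log(n-m)$ and hence the desired decay of $\widehat\mu$.

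The main obstacle is the quantitative passage from residue information modulo $M_j$ to a genuine lower bound on $\|\xi/P_j\|$: a priori, $\xi\not\equiv 0\pmod{M_j}$ only gives $\|\xi/P_j\|\ge 1/P_j$, which is useless for our purposes. Resolving this requires the stronger equidistribution property of the orbit $\{b^k\bmod M_j\}$ built into the order/primitive-root condition on $M_j$, ensuring that the residue lies in a ``middle'' arc for a positive proportion of the $j$ in the window; this is the step where the inductive choice of $M_n$ must be tuned carefully. The final packaging is a diagonal enumeration over pairs $(b,h)\in\{2,3,\dots\}\times(\Z\setminus\{0\})$: at stage $n$, $M_n$ is chosen large enough that the tails of the $\mathsf{DEL}$ sums for the first $n$ enumerated pairs each drop by a prescribed summable amount. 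A countable intersection of the resulting full-$\mu$-measure sets of $b$-normal points over $b\ge 2$ then yields the pointwise absolute normality of $\mu$ claimed in the theorem.
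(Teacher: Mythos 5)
Your part (i) is fine and is essentially the paper's own argument (dilate by $P_n=M_1\cdots M_n$ and note $K$ avoids the arc $(\tfrac12,1)$, then invoke the uniqueness criterion). The problem is part (ii), where the proposal has a genuine gap at exactly the point you flag as "the step where the inductive choice of $M_n$ must be tuned carefully." First, the reduction to a \emph{pointwise} bound $|\widehat\mu(h(b^n-b^m))|\lesssim_{b,h}(n-m)^{-2}$, uniform in $m$, is almost certainly the wrong target: $\mu$ is supported on a set of uniqueness, so it is not Rajchman, and the only mechanism available (equidistribution of $b^n$ modulo the moduli) controls the mixed-radix digits of $h(b^n-b^m)$ for \emph{most} $n$ in a long interval, never for every pair $(m,n)$. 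Concretely, in the paper's setting, as $n$ ranges over an interval of length $\ord_{N_r/Q}(b)$ the digit vector of $h(b^n-b^m)$ at the controlled positions attains \emph{every} value, including the all-zero pattern; for such $n$ (which can have $n-m$ arbitrarily large) the corresponding cosine factors are all close to $1$ and the method yields no decay at that frequency. This is why the paper proves only an averaged estimate — Propositions \ref{round-esti} and \ref{key-prop} bound $\sum_{n\in I}|\widehat\mu(hb^n-hb^m)|$ over intervals $I$ of length $\ord_{N_r/Q}(b)$ — and then feeds that average into the $\mathsf{DEL}$ sum; nothing pointwise in $n-m$ is available or needed.

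Second, even granting an averaged formulation, the key quantitative step is missing: knowing that $b^{n-m}\ (\mathrm{mod}\ M_j)$ is "far from $1$" gives no lower bound on $\|\xi/P_j\|$ for $\xi=h(b^n-b^m)$, because $\{\xi/P_j\}$ is governed by $\xi\ (\mathrm{mod}\ P_j)$ — essentially the top mixed-radix digit $\mathsf{d}_{j-1}(\xi)$ (cf.\ Lemma \ref{fractional-esti}) — not by the residue modulo the single prime $M_j$. Turning residue/order information into the statement that a positive proportion of digits land in a middle arc is precisely the multiscale content of the paper: it repeats each prime $q_r$ exactly $\ell_r=r^d$ times so that, by the order-lifting result (Theorem \ref{prime power order}), $\ord_{q_r^{\ell_r}}(b)=q_r^{j_r}\ord_{q_r^{k_r}}(b)$ with $j_r\to\infty$, and this is what produces the well-distributed partition of every interval of length $\ord_{N_r/Q}(b)$ (Proposition \ref{3.2-number-count}) and hence the binomial counting of "good" digit patterns (Lemma \ref{cardina-esti}). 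Your construction, with each large prime used once and only a lower bound on $\ord_{M_n}(b)$, does not by itself yield any joint control of the digits across scales, so the claimed bound $\sum_j\|\xi/P_j\|^2\gtrsim\log(n-m)$ is unsubstantiated. (A smaller remark: the closing diagonalization over pairs $(b,h)$ is unnecessary — in the paper one fixed sequence works for all $b,h$ simultaneously since the constants $n_0,r_0,r_1$ may depend on $(b,h)$ — and as stated it is also problematic, since the tail of each $\mathsf{DEL}$ sum depends on all future $M_j$, not just those chosen by stage $n$.)
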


 We will present the more precise version of the theorem in Theorem \ref{thm-1.5}.  This result demonstrates that having the $\mathsf{DEL}$ criterion for {\it all} integers $b\ge 2$ also does not imply there is some uniform Fourier decay. Hence, statement $(C)$ does not imply it supports a Rajchman measure and hence it does not imply statement $(D)$.  As it is also a set of uniqueness, this result also shows that $(C)$ does not imply $(A)$.

 Our second contribution is to give a strong answer to ({\bf Qu}) via the Hausdorff gauge functions. Recall that if $ \varphi: [ 0, \infty ) \to [ 0, \infty ) $ is an increasing and continuous function with $ \varphi( 0 ) = 0 $,  it is called a \textbf{gauge function} and the associated Hausdorff measure of a set $ E $ is defined by 
$$ \mathcal{H}^{ \varphi }(E) = \lim_{\delta \rightarrow 0} \mathcal{H}^{ \varphi }_{ \delta }(E), $$
where $$ \mathcal{H}^{ \varphi }_{ \delta }(E) = \inf \left\{ \sum_{i=1}^{\infty} \varphi(\vert U_i \vert) : E \subset \bigcup_{i=1}^{\infty} U_i, \vert U_i \vert \le \delta   \right\}. $$
The following is our second main theorem of our paper:

\begin{thm}\label{main-theorem}
 Let $\varphi:[0,\infty)\to[0,\infty)$ be an increasing function with $\varphi(0) = 0$ and $r/\varphi(r)\to 0$ as $r\to 0$. We can always find a compact set $K$ such that ${\mathcal H}^\varphi(K)>0$ and $K$ is an absolutely normal set of uniqueness.
\end{thm}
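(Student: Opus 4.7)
My plan is to adapt the Moran-set construction that underlies Theorem \ref{main-theorem00} so that, in addition to producing a pointwise absolutely normal measure, it yields a set of the prescribed gauge size. Throughout I keep $\D_n=\{0,1\}$ and try to construct a sequence $\{M_n\}$ with the following three features: (i) the product $N_n=M_1M_2\cdots M_n$ is comparable to a target sequence $\{N_n^\star\}$ determined by $\varphi$; (ii) the associated Cantor-Moran measure $\mu$ in (\ref{cantor-moran_measure}) satisfies the DEL criterion for every integer $b\ge 2$; and (iii) the Moran set $K$ is a set of uniqueness. Features (ii) and (iii) are exactly what Theorem \ref{main-theorem00} provides for one specific sequence; feature (i) is the new constraint imposed by the gauge.

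The target $\{N_n^\star\}$ is read off from $\varphi$: since $r/\varphi(r)\to 0$, one can select an increasing integer sequence $N_n^\star\ge 2^n$ with $\varphi(1/N_n^\star)\ge c\cdot 2^{-n}$ for some fixed $c>0$. Once the product $M_1\cdots M_n\asymp N_n^\star$, each level-$n$ Moran cylinder has $\mu$-mass $2^{-n}$ and diameter $\asymp 1/N_n^\star$, so a standard cylinder-counting argument gives $\mu(B)\le C\varphi(|B|)$ for every ball $B$, and then the mass distribution principle yields $\mathcal H^\varphi(K)>0$.

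The core of the argument is the construction of $\{M_n\}$ itself, which I plan to carry out by an interleaved scheme: at stages where the running product is safely below the budget I insert \emph{filler} bases $M_n=2$, and at stages where the running product is near $N_n^\star$ I insert a \emph{critical} base drawn from a fixed sequence that verifies the DEL criterion as in Theorem \ref{thm-1.5}. Because each filler factor contributes $|{\mathsf M}_{\{0,1\}}(\xi/(2N_{n-1}))|\le 1$ to the product (\ref{eq-FT}), filler stages cannot destroy Fourier decay already achieved at critical stages. The hypothesis $\varphi(r)/r\to\infty$ provides just the logarithmic room needed: the total budget for critical insertions up to level $n$ grows like $\log N_n^\star - n\log 2\to\infty$, and a diagonal enumeration can then handle every base $b\ge 2$ in turn.

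The main obstacle, and the technical heart of the plan, is to verify that the DEL criterion can indeed be satisfied within this sparse schedule and that the resulting $K$ remains a set of uniqueness. The first point requires revisiting the order-of-integers argument of \cite{DEL1963} and showing it stays effective when critical bases are separated by long runs of filler stages. The second requires extracting from the proof of Theorem \ref{main-theorem00} the precise arithmetic property of the critical bases that forces uniqueness, and checking that this property survives the dilution by filler stages. Once the modified $\mu$ is shown to be pointwise absolutely normal, density of $\mathbb A$ in $K$ and the equality $\dimH(K\cap\mathbb A)=\dimH K=1$ follow immediately from $\mu(K\cap \mathbb A)=1$ together with $\dim\mu=1$.
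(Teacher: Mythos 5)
Your plan has a genuine quantitative gap at exactly the point you flag as "the technical heart": verifying the $\mathsf{DEL}$ criterion within the sparse schedule of critical bases is not just delicate, it is impossible for the borderline gauges that the theorem must cover. Keeping $\D_n=\{0,1\}$ at every level forces each level-$n$ cylinder to have mass $2^{-n}$, so your gauge requirement $\varphi(1/N_n)\gtrsim 2^{-n}$ pins $N_n=M_1\cdots M_n$ down to $2^n\cdot L(2^n)$ where $L(x)=\varphi(1/x)x$ may tend to infinity arbitrarily slowly (take $\varphi(r)=r\log\log\log(1/r)$, say). Hence the total logarithmic budget for critical bases up to level $n$ is $O(\log L(2^n))$, i.e.\ it can grow slower than any iterated logarithm of $N_n$. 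But the filler levels contribute no guaranteed decay: with base $2$ there is no "middle-third digit," which is precisely why the paper requires $q_1\ge 7$ in Lemma \ref{cos-esti}, so the factor $|\mathsf{M}_{\{0,1\}}(\xi/(2N_{k-1}))|$ cannot be bounded away from $1$ uniformly over admissible digits. All decay must therefore come from the critical positions, giving at best $|\widehat\mu(h(b^n-b^m))|\lesssim\gamma^{u(N)}$ with $u(N)=$ number of critical positions up to scale $\sim b^N$, and the $\mathsf{DEL}$ series $\sum_N N^{-3}\sum_{m,n<N}|\widehat\mu(h(b^n-b^m))|$ then behaves like $\sum_N N^{-1}\gamma^{u(N)}$, which diverges whenever $u(N)=o(\log N)$ — compare the paper's own bookkeeping in Proposition \ref{key-prop} and the convergence step (\ref{4.3eq-4}), where the count $\sum j_i$ of decay-producing positions must dominate $3\log(N_r/N_{r-1})$. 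So the approach's feasibility degrades with $\varphi$ and collapses precisely in the near-$r$ regime the theorem is about; "the budget tends to infinity" is not enough, and no alternative source of decay (or cancellation) is offered. The vague "diagonal enumeration over bases $b$" compounds this, since one fixed measure must satisfy $\mathsf{DEL}$ for every $b$ simultaneously.

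The paper circumvents this entirely by decoupling normality from largeness: it never modifies the thin measure $\mu$ of Theorem \ref{thm-1.5} (whose bases are the primes $q_s$ repeated $\ell_s=s^d$ times, independent of $\varphi$), but convolves it with a fat Cantor–Moran measure $\nu$ on complementary digit sets $\mathcal{E}_n$, so that $|\widehat{\mu\ast\nu}|\le|\widehat\mu|$ inherits the $\mathsf{DEL}$ criterion for free (Proposition \ref{prop-large-set}); the support $K$ then has nearly full digit sets $\mathcal{F}_n=\D_n+\mathcal{E}_n$ except on a sparse set of levels $\mathcal{N}$ chosen through Lemma \ref{lemmaN} with $g(r)=\varphi(r)/r$, which is exactly what makes the mass distribution estimate $\eta(B(x,r))\le 4\varphi(r)$ and hence $\mathcal{H}^\varphi(K)>0$ work; uniqueness comes from the restricted levels in $\mathcal{N}$ via the Salem criterion, and $\dimH(K\cap{\mathbb A})=\dimH K=1$ is obtained by proving $\dimH(\mu\ast\nu)=1$ through a local-dimension/strong-law argument (Proposition \ref{prop-local-dim}), not "immediately" as your last sentence suggests. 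If you want to salvage your outline, the missing idea to import is this convolution step: make the gauge-largeness live in the convolving factor $\nu$ (equivalently, in enlarged digit sets), rather than trying to squeeze it out of the bases of the normality-producing measure itself.
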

 This theorem also shows that an arbitrarily large set of uniqueness in terms of gauge functions exist. Moreover, its absolutely normal points form a dense subset with Hausdorff dimension equal to that of the set itself. The ultimate ideal goal would be to study the largeness of $K\cap{\mathbb A}$ in terms of the corresponding gauge function to see if we can indeed have ${\mathcal H}^{\varphi}(K\cap{\mathbb A)}>0$. We are not able to show this for all gauge functions at this moment. However, we will show that it holds under a stronger assumption on $\varphi$ (see Theorem \ref{main-theorem2}).

\subsection{Self-similar set of uniqueness}  \label{sub-sec-ssUset}    In literature, although most people may not notice, absolutely normal sets of uniqueness with positive Hausdorff dimension already exist if we introduce some irrational scales in the construction.   Recall that a {\bf self-similar iterated function system (IFS)} on $ \mathbb{R} $ is a finite set $\Phi =  \{ S_i(x) = r_i x + a_i \}_{i=1}^{m } $ of contracting linear transformations on $ \mathbb{R} $. By Hutchinson \cite{Hutchinson1981}, for a given self-similar IFS $ \{ S_i \}_{i=1}^{ m } $ on $ \mathbb{R} $, there is a unique nonempty compact set, called the \textit{attractor},  $ K \subset \mathbb{R} $ such that 
$$ 
K = \bigcup_{i=1}^{m } S_{i}(K).  
$$
The existence of a set fulfilling {\bf (Qu)} with positive Hausdorff dimension can be deduced from a breakthrough result by Hochman and Shmerkin \cite[Theorem 1.4]{Hochman2015}. To construct one with positive Hausdorff dimension, one can consider an iterated function system 
\begin{equation}\label{eq-IFS-Pisot}
\Phi = \{\alpha^{-1}x, \alpha^{-1}x+1\},
\end{equation}
 where $\alpha$ is an irrational Pisot number such that $\alpha>2$. Note that $\alpha$ is multiplicatively independent to all integers $n\ge 2$. It can be claimed that  the attractor $K$ of this IFS is a set of uniqueness and simultaneously supports a probability measure that is pointwise $n$-normal for all $n$. 

    The fact that it is a set of uniqueness follows from the classical work of Salem and Zygmund \cite{Salem1963}. For absolute normality, by Hochman and Shmerkin \cite[Theorem 1.4]{Hochman2015}, who proved that for $C^{1+\eta}$-regular IFS such that at least one of the maps $f$ whose asymptotic contraction ratio $\lambda\not\sim \beta$ where $ \beta > 1 $ is a Pisot number, then the invariant measure $ \mu $ is pointwise $ \beta$-normal. Here, a point $x$  can be normal to a Pisot number $\beta$ by considering its $\beta$-expansion, which includes also the case that $\beta$ is an integer where $\beta$-expansion is our usual $b$-ary digit expansion. We can apply this result to self-similar IFS with Pisot contraction ratios as stated in (\ref{eq-IFS-Pisot}) to obtain that its equal-weighted self-similar measure  is pointwise absolutely normal. This implies that the set $K$ is an absolutely normal set of uniqueness.   By taking more contraction maps, it is possible to construct an absolutely normal set of uniqueness with dimensional arbitrarily close to 1, but they never attain 1 (see Proposition \ref{prop-not-1} below).

There are many advances concerning the absolute normality of the associated invariant measures for an IFS after the result by Hochman and Shmerkin \cite{Hochman2015}. They include \cite{Hochman2015,Simon2024,Algom2021,Li2022,Bremont2021,Dayan2024,BaranyKaenmakiPyoralaWu2023}. Readers can refer to the recent survey by Algom \cite{Algom2025} for these recent advances.  In particular, the result by Dayan, Ganguly and B. Weiss, who proved that if $a\not\in{\mathbb Q}$, then the standard measure generated by the IFS $\{3^{-1} x, 3^{-1}x+a\}$ is pointwise absolutely normal and the attractor is also a set of uniqueness after a conjugation of an affine map. However, such construction cannot be made to a full-dimensional set of uniqueness (see below).

 Varj\'u and Yu \cite{VarjuYu2022} completely classified when a self-similar set $K$ generated by the self-similar IFS $\Phi$ is a set of uniqueness. It occurs if and only if $K$ has Lebesgue measure zero and the IFS can be conjugated to an IFS such that $r_i = r^{\ell_i}$ for some $r$ such that $r^{-1}$ is a Pisot number and $ \ell_1, \cdots, \ell_m \in \mathbb{Z}_{>0}$, with $\gcd(\ell_1, \cdots, \ell_m) = 1$, and $ a_i \in \mathbb{Q}(r)$, the smallest number field containing $r$. This result generalized the classical result by Salem and Zygmund. Because of the classification, we have the following proposition.

\begin{pro}\label{prop-not-1}
    A self-similar set of uniqueness must have Hausdorff dimension strictly less than 1. 
\end{pro}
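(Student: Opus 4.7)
The plan is to argue by contradiction. Suppose $K$ is a self-similar set of uniqueness with $\dimH K = 1$. By the Varj\'u--Yu classification recalled immediately above, after an affine conjugation we may assume
\[
\Phi = \{S_i(x) = r^{\ell_i} x + a_i\}_{i=1}^m,
\]
with $\beta := r^{-1}$ a Pisot number, $\gcd(\ell_1,\dots,\ell_m) = 1$, $a_i \in \mathbb{Q}(\beta)$, and $K$ of Lebesgue measure zero. Let $s$ be the similarity dimension, determined by $\sum_i r^{\ell_i s} = 1$. Since $\dimH K \le s$ in general, we obtain $s \ge 1$, equivalently $\sum_i r^{\ell_i} \ge 1$.

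The key observation is that every contraction ratio $r^{\ell_i}$ and every translation $a_i$ lies in the algebraic number field $\mathbb{Q}(\beta)$, so $\Phi$ is an IFS with algebraic parameters. I would then invoke Hochman's dimension theorem for self-similar sets with algebraic parameters: either there is an exact overlap (two distinct finite compositions coincide as affine maps), or $\Phi$ satisfies exponential separation. In the exact-overlap case, one passes to a reduced equivalent IFS $\widetilde\Phi$ with the same attractor $K$ but strictly smaller combinatorial complexity; iterating, we either reach $\dimH K < 1$ (immediate contradiction to the assumption) or arrive at an IFS without exact overlaps. In the latter case Hochman's theorem gives $\dimH K = \min(s', 1) = 1$ for the reduced similarity dimension $s'$, and moreover the dimension-maximizing self-similar measure $\mu$ on $K$ satisfies $\dim \mu = 1$.

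The final step is to derive a contradiction from the existence of a dimension-one self-similar measure $\mu$ on the real line, supported on a Lebesgue-null set $K$, whose IFS has Pisot reciprocal $\beta$. Here the Pisot hypothesis is decisive: through Garsia-type control of distances between distinct cylinders at level $n$, Feng's exact dimensionality theorem for self-similar measures on $\R$, and the dimension-vs-absolute-continuity dichotomy developed by Hochman, Shmerkin and Varj\'u in the algebraic setting, the measure $\mu$ must be absolutely continuous with respect to Lebesgue measure. But $\mu(K) = 1$ together with $\mu \ll \lambda$ forces $\lambda(K) > 0$, contradicting the Varj\'u--Yu hypothesis $\lambda(K) = 0$. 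This completes the contradiction and yields $\dimH K < 1$.

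The main obstacle is the last step, and in particular the absolute-continuity dichotomy for dimension-one self-similar measures with algebraic parameters on $\R$. This is the deepest input and must be imported from the modern theory of self-similar measures (in the spirit of Garsia's classical argument for Pisot Bernoulli convolutions, combined with Varj\'u's absolute continuity results and Hochman's inverse theorem). Once this dichotomy is available as a black box, the rest of the argument is a direct assembly of the Varj\'u--Yu classification, the reduction under exact overlaps, and Hochman's dimension formula for algebraic IFS.
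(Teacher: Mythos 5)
Your first half follows the paper: both you and the authors start from the Varj\'u--Yu classification to reduce to an IFS $\{r^{\ell_i}x+a_i\}$ with $r^{-1}$ Pisot, $a_i\in\mathbb{Q}(r)$, and $K$ of zero Lebesgue measure. But your second half has a genuine gap. The decisive step you invoke as a black box --- ``a dimension-one self-similar measure on $\R$ whose contraction ratios are powers of a Pisot reciprocal and whose parameters are algebraic must be absolutely continuous'' --- is not a theorem. Whether exact dimension $1$ forces absolute continuity for self-similar measures is a well-known open problem even for Bernoulli convolutions, and in the Pisot case the situation is the opposite of what you need: by the Erd\H{o}s--Salem--Zygmund mechanism the Fourier transform of such a measure does not tend to zero, so the measure is \emph{always} singular, independently of its dimension. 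Hence no ``dimension $1$ $\Rightarrow$ absolutely continuous'' dichotomy can exist in this setting, and your final contradiction ($\mu\ll\lambda$ versus $\lambda(K)=0$) cannot be reached this way. A secondary issue is the exact-overlap reduction: passing to a ``reduced equivalent IFS of strictly smaller combinatorial complexity'' and iterating until exact overlaps disappear is not a justified procedure --- there is no known guarantee that a self-similar set with exact overlaps is the attractor of some IFS without them, so the application of Hochman's exponential-separation theorem is not secured either.

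The paper avoids all of this with a shorter, more elementary chain of citations: the Varj\'u--Yu form of the IFS satisfies the finite type condition (Ngai--Wang), hence the weak separation condition (Nguyen), and by Zerner's theorem a self-similar set with the weak separation condition and Hausdorff dimension one has positive Lebesgue measure, contradicting the zero-measure conclusion from Varj\'u--Yu. If you want to salvage your approach, the correct replacement for your black box is precisely this separation-condition route (or, in the Bernoulli-convolution spirit, a Garsia-entropy argument showing the dimension is strictly below $1$ for Pisot parameters), not an absolute-continuity dichotomy.
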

    
\begin{proof}
     Suppose, for contradiction, that there exists a 1-dimensional self-similar set $K$ that is also a set of uniqueness. By the result Varj\'u and Yu \cite[Theorem 1.4]{VarjuYu2022}, $K$ must have zero Lebesgue measure; moreover, after a conjugation, $ K $ is generated by the self-similar IFS $\Phi = \{ r^{\ell_i}x + a_i \}_{1 \le i \le m} $, where $r^{-1}$ is a Pisot number, $ \ell_1, \cdots, \ell_m \in \mathbb{Z}_{>0}$, $\gcd(\ell_1, \cdots, \ell_m) = 1$, and $ a_i \in \mathbb{Q}(r) $. Indeed, such IFS must satisfy the finite type condition (see \cite[Theorem 2.9]{NgaiWang2001}), and thus satisfies the weak separation condition (by \cite{Nhu2002}). However, by a result of Zerner \cite{Zerner1996} (see also \cite[Theorem 4.2.16]{BKS2024}), a self-similar set with Hausdorff dimension one generated by such an IFS has positive Lebesgue measure—this directly contradicts the earlier conclusion that $ K $ has zero Lebesgue measure.
\end{proof}

Consequently, while it is possible to construct an absolutely normal set of uniqueness via self-similar IFS, there is no such set with full Hausdorff dimension by merely considering self-similar sets. 

On the other hand, we can still provide an absolutely normal set of uniqueness of dimension one by taking a countable union. The construction was provided to us by De-Jun Feng:  Let $\beta_i>2$ and $\beta_i\to2$ as $i\to\infty$ be a sequence of Pisot numbers. Then we know that the self-similar set $E_i$ generated by the IFS $\{\beta_i^{-1}x,\beta_i^{-1}+1\}$ is an absolutely normal set of uniqueness with Hausdorff dimension $\log2 / \log \beta_i $. Note that the property of being a set of uniqueness is preserved by affine transformations (see e.g. \cite[p.71]{Kechris}). If we define $F_i = 2^{-i}(E_i) + 2^{-i}$ and define 
$$
E = \bigcup_{i=1}^{\infty} F_i\cup\{0\}.
$$
Then each $F_i$ is a closed set of uniqueness and $\{0\}$ is also a set of uniqueness.  By a theorem of Bari, which states that a countable union of closed sets of uniqueness is also a set of uniqueness (see e.g.  \cite[p.41]{Kechris}), so $E$ is a set of uniqueness and $E$ is still compact in this case. Note that $\dimH(E) = 1$ since $\dimH(F_i)\to 1$. Finally, as each $F_i$ is an absolutely normal set of uniqueness as stated before, $\dimH(F_i\cap {\mathbb A}) = \dimH(F_i)$, we see that $E$ is also an absolutely normal set of uniqueness.  This construction provides an answer to {\bf (Qu)}, but is by no mean satisfactory since all points, except the origin, locally do not have Hausdorff dimension one.

\subsection{Outline of the Proof.} We now outline the main strategy for the proof of Theorem \ref{main-theorem00} and Theorem \ref{main-theorem}. Compared to the construction using self-similar IFS with Pisot contraction ratios or involving irrational translation digits, Moran construction is in some sense more natural since it uses only regular digit expansions with multi-scale integer bases (no non-integer basis or irrational digits are involved).    Our proof will be divided into three steps.

\noindent{\underline{\it Step I:  Moran Set of uniqueness}} For $  x \ge 0  $, we denote the fractional part of $ x $ by $ \{ x \} $. The following theorem provides one of the most basic criterion to construct sets of uniqueness. There are other variants of this criterion, see \cite{Kechris}.
\begin{thm}\cite[p.50 Theorem II]{Salem1963}\label{mattila-criterion}
    Let $ C \subset [0,1] $ be a compact set. Suppose there exists a non-degenerate interval $ I \subset [0,1] $, and an increasing sequence $ \{ k_j \}_{j\ge 1} $ of positive integers such that 
    \begin{equation}\label{lem-2.12-eq1}
       \{  k_j x \}  \notin I, \textup{ for all } x\in C, j\ge 1,
    \end{equation}
    then $ C $ is a set of uniqueness. 
\end{thm}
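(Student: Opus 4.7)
The theorem is a classical statement about $H$-sets, and my proof plan follows Salem's approach via Riemann's method for trigonometric series. Suppose
$$S(x) := \sum_{n=0}^\infty\bigl(a_n\cos(2\pi n x)+b_n\sin(2\pi n x)\bigr) = 0$$
for every $x\in [0,1]\setminus C$. By Weyl's equidistribution theorem applied to the strictly increasing integer sequence $\{k_j\}$, the fractional parts $\{k_j x\}$ are uniformly distributed mod $1$ for Lebesgue-a.e.~$x$, so the hypothesis $\{k_j x\}\notin I$ for all $j$ on $C$ forces $C$ to have Lebesgue measure zero. Consequently $S$ vanishes almost everywhere on $[0,1]$, and the Cantor-Lebesgue theorem yields $a_n,b_n\to 0$. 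The goal is then to force every $a_n,b_n$ to equal zero.

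I would next introduce Riemann's doubly integrated function
$$F(x)=\tfrac{1}{2}a_0 x^2 - \sum_{n=1}^\infty \frac{a_n\cos(2\pi n x)+b_n\sin(2\pi n x)}{(2\pi n)^2},$$
which is continuous on $[0,1]$ by the Weierstrass M-test, since $a_n,b_n$ are bounded. By Riemann's first theorem, the generalized symmetric second derivative of $F$ exists and equals zero on $[0,1]\setminus C$. The plan is to show that $F$ is linear on $[0,1]$, which by uniqueness of Fourier expansions would force $a_n=b_n=0$ for all $n$.

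The crux, where the $H$-set hypothesis enters, is the technique of \emph{formal multiplication of trigonometric series}. I would choose a nontrivial smooth function $P$ on the circle $\R/\Z$ whose support is contained in $I$ and expand $P$ in its Fourier series. The formal product of $S(x)$ with the Fourier series of $P(k_j x)$ is again a trigonometric series, and one verifies it converges to $0$ everywhere on $[0,1]$: on $C$ because $\{k_j x\}\notin I$ gives $P(k_j x)=0$, and off $C$ because $S(x)=0$. By Cantor's classical uniqueness theorem, an everywhere-convergent trigonometric series summing to $0$ has all coefficients zero; varying $P$ and $j$ and unwinding the resulting linear constraints on the coefficients of $S$ ultimately delivers the linearity of $F$.

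The main technical obstacle is rigorously defining the formal product and justifying the convergence claim, which requires careful manipulation of Fourier coefficients and exploitation of the decay $a_n,b_n\to 0$. This step is the heart of Salem's argument in \cite[p.~50]{Salem1963}; I would follow his development closely, since the arithmetic structure of the sequence $\{k_j\}$ enters only through the hypothesis $\{k_j x\}\notin I$ on $C$, and no further number-theoretic input is required.
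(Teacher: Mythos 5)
The paper does not actually prove this statement — it is quoted from Salem's book — so the only meaningful comparison is with the classical proof given there (Rajchman's theorem that H-sets are sets of uniqueness). Your outline follows exactly that classical route: Weyl's theorem to see that the hypothesis $\{k_j x\}\notin I$ forces $C$ to have Lebesgue measure zero, Cantor--Lebesgue to get $a_n,b_n\to 0$, formal multiplication of $S$ by $P(k_j x)$ for a smooth $P$ supported in $I$, Cantor's uniqueness theorem applied to the product, and a limit in $j$. In that sense you have identified the right proof.

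Two places, however, are not yet arguments. First, the claim that the product series converges to $0$ on $C$ ``because $P(k_j x)=0$'' is precisely the point that requires Rajchman's formal-multiplication theorem: one must show that the partial sums of the formal product differ from $P(k_j x)$ times the partial sums of $S$ by a quantity tending to $0$ uniformly in $x$ (this is where the rapid decay of the Fourier coefficients of $P$ and $c_n\to 0$ are used); only then does the vanishing of $P(k_j x)$ at points of $C$ give convergence there. You flag this and defer to Salem, which is legitimate for an outline, but it is the entire content of the theorem. Second, the endgame is muddled. Once Cantor's theorem kills all coefficients of the product, the relations read $\sum_m \widehat{P}(m)\,c_{n-mk_j}=0$ for every $n$ and $j$; letting $j\to\infty$ and using $c_n\to 0$ together with $\sum_m|\widehat{P}(m)|<\infty$ leaves $\widehat{P}(0)\,c_n=0$, so you must choose $P$ with $\int P\neq 0$ — ``nontrivial'' does not guarantee this — and then $a_n=b_n=0$ follows at once. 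The paragraph about Riemann's function $F$ and ``delivering its linearity'' is superfluous: $F$ enters only inside the proof of Cantor's uniqueness theorem, which you invoke as a black box anyway, and nothing in the argument you describe establishes linearity of $F$ except the vanishing of the coefficients that you are trying to prove.
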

There is an easy way to produce a Moran set of uniqueness, which was first observed by Lai \cite{Lai2016}. Suppose that 
$$
\sup_{n\in \N}\frac{\max\D_n}{M_n}\le L<1/2.
$$ 
Then $K$ defined in (\ref{eq_Moran}) will be a set of uniqueness. Indeed, we can apply Theorem \ref{mattila-criterion} by taking $k_j = M_1\cdots M_j $ and $I = (2L,1)$. Then for all $x\in K$, for all $ j \ge 1 $, $\{k_jx\}\not\in (2L,1)$. Consequently, if we are only concerned about how large a Moran set of uniqueness can be, this criterion already allows us to construct a set of uniqueness with Hausdorff dimension one. We leave it for interested readers. 

\smallskip

\noindent{\underline{\it Step II: A thin set of absolute normal set of uniqueness.}}  ~~ Constructing an absolutely normal set of uniqueness using the Moran construction requires much more significant work and we have to choose $ \{ M_n \}_n $ more strategically.


 Let $\{q_n\}_{n \ge 1}$ be a strictly increasing sequence of prime numbers with $q_n= O(n^d)$ for some $ d \ge 1$ and $q_1\ge 7$. Here, $q_n= O(n^d)$ is the standard big $O$-notation which means that the prime numbers $ \{ q_n \}_n $ have at most a polynomial growth. Let $\ell_n = n^d$  for $ n \ge 1 $ and 
$$
    N_r = \begin{cases}
        & 1, \quad  \textup{for } r = 0 \\
        &  q_{1}^{ \ell_1 } \cdots q_{r}^{ \ell_r }, \quad \textup{for } r\ge 1.
    \end{cases}
$$ 
We now use these $ \{ q_n \}_n $ to form a Cantor-Moran measure with $M_n$, $\D_n = \{0,1\}$ and weights $\omega_n$, $0<\omega_n<1$ for all $n\in\N$. Let
\begin{equation}\label{eg}
   \mu   = \Conv_{ s = 0 }^{ \infty } \Conv_{ j =0 }^{ \ell_{s+1} - 1 } \left( \omega_n \delta_0 + (1-\omega_n) \delta_{ N_{s}^{ -1 } q_{ s + 1 }^{ -(j+1) } } \right),
\end{equation}
where we can write out $M_n$ explicitly as follows: 
\begin{equation}\label{def_Mn-introd}
     M_n = q_{s+1}, \textup{ if } n = \sum_{i=1}^{s} \ell_i + (j+1) ~\textup{for some }   s \ge 0 \textup{ and } 0\le j \le \ell_{s+1} - 1.
\end{equation}
In other words, 
$$
\{M_n\}_{ n \ge 1 } = \{\underbrace{q_1,\cdots, q_1}_{\ell_1 \ \mbox{times}},\underbrace{q_2,\cdots, q_2}_{\ell_2 \ \mbox{times}}, \cdots \}.
$$
The following theorem constructs the absolutely normal set of uniqueness and Theorem \ref{main-theorem00} follows immediately from this. 
\begin{thm}
\label{thm-1.5}
 Let $\{q_n\}_{n \ge 1}$ be a strictly increasing sequence of prime numbers such that $q_n= O(n^d)$ for some $ d \ge 1$ with $q_1\ge 7$. Suppose also that the weights $ \{ \omega_n \}_n $ satisfies
 $$0<\inf_{n\in\N}~\omega_n\le\sup_{n\in\N}~\omega_n<1.$$
 Then the Cantor-Moran measure (\ref{eg}) satisfies the $\mathsf{DEL}$ criterion for all integers $ b \ge 2 $ and is pointwise absolutely normal. Moreover, the associated Moran set is a set of uniqueness.
\end{thm}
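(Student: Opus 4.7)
The conclusion bundles three assertions. That $K$ is a set of uniqueness follows at once from the Moran criterion in Step~I: since $\D_n=\{0,1\}$ and every $M_n$ is a prime $q_s\ge q_1\ge 7$, we have $\sup_n \max\D_n/M_n\le 1/7<1/2$, so Theorem~\ref{mattila-criterion} applies with $k_j=M_1\cdots M_j$ and $I=(2/7,1)$. Pointwise absolute normality will follow from the $\mathsf{DEL}$ criterion via Theorem~\ref{DEL}, so the substance is to verify the $\mathsf{DEL}$ criterion for every integer $b\ge 2$.

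Set $P_n:=M_1\cdots M_n$ and $g_n(\xi):=\omega_n+(1-\omega_n)e^{-2\pi i\xi/P_n}$, so $\widehat{\mu}=\prod_n g_n$ by (\ref{eq-FT}). The identity $|g_n(\xi)|^2 = 1 - 4\omega_n(1-\omega_n)\sin^2(\pi\xi/P_n)$ together with $\omega_n$ bounded away from $0$ and $1$ and $1-x\le e^{-x}$ yields a constant $c_0>0$ with
\[
|\widehat{\mu}(\xi)|^2 \le \exp\Bigl(-c_0\sum_{n\ge 1}\sin^2(\pi\xi/P_n)\Bigr).
\]
The plan is to prove that for every integer $b\ge 2$ and every nonzero $h\in\Z$ there exists $c_1=c_1(b,h)>0$ such that, for $\xi=hb^m(b^k-1)$,
\[
\sum_{n\ge 1}\sin^2(\pi\xi/P_n) \ge c_1 F(k),
\]
where $F(k)\to\infty$ fast enough that $\sum_k e^{-c_0 c_1 F(k)/2}$ converges (the target is $F(k)\asymp k/\log(k+2)$). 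Granting this, $|\widehat{\mu}(h(b^n-b^m))|\le e^{-c_2(n-m)/\log(n-m+2)}$, and re-indexing $k=n-m$ in the double sum of (\ref{key criterion}) gives
\[
\sum_N \frac{1}{N^3}\sum_{0\le m,n<N}\bigl|\widehat{\mu}(h(b^n-b^m))\bigr| \le \sum_N \frac{C}{N^2} < \infty,
\]
verifying the $\mathsf{DEL}$ criterion.

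The lower bound is produced block by block. Only finitely many primes $q_s$ divide $bh$; for the rest, the lifting-the-exponent lemma gives $v_{q_s}(\xi)=v_{q_s}(h)+v_{q_s}(b^k-1)=O_{b,h}(\log(k+2))$ (it vanishes unless $\mathrm{ord}_{q_s}(b)\mid k$, and otherwise grows only logarithmically in $k$ by LTE). For the block of $\ell_s$ indices $n$ with $P_n=N_{s-1}q_s^{j+1}$, $0\le j\le \ell_s-1$, the fractional parts $\{\xi/P_n\}$ are read off the $q_s$-adic expansion of $\xi\bmod N_{s-1}q_s^{\ell_s}$ by successive shifts. Multiplicative-order analysis forces this expansion to avoid long runs of $0$'s or of $(q_s-1)$'s (such a run would force $b^k-1$ to be divisible by an unreasonably high power of $q_s$, contradicting the LTE bound just cited), so a positive proportion of the $\ell_s$ digits lies in a fixed middle range, delivering $\|\xi/P_n\|\ge 1/8$ and hence $\sin^2(\pi\xi/P_n)\ge 1/2$ for that many $j$'s. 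Summing over $s$ up to the cutoff $s^*$ with $N_{s^*}\lesssim|\xi|$ and using $\ell_s=s^d$, $q_s\asymp s^d$, $\sum_{s\le s^*}\ell_s\asymp (s^*)^{d+1}\asymp (m+k)/\log(m+k)$, one reaches $F(k)\gtrsim k/\log k$.

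\textbf{Main obstacle.} The delicate step is the block-level ``positive proportion'' claim: that the $q_s$-adic expansion of $\xi\bmod N_{s-1}q_s^{\ell_s}$ cannot be overwhelmingly concentrated on the extreme digits $0$ or $q_s-1$, and in fact places a uniform fraction of digits into a fixed middle range like $[q_s/4,3q_s/4]$, with the fraction independent of $s$, $k$, $m$. This is a uniform-distribution statement controlled by the multiplicative order of $b$ modulo powers of $q_s$, and is where the hypotheses $q_1\ge 7$, $\ell_s=s^d$, and $q_s=O(s^d)$ are jointly calibrated: one needs enough ``room'' at each prime scale that the block length $\ell_s$ genuinely exceeds the LTE contribution $v_{q_s}(\xi)$, and enough primes overall so that $\sum_{s\le s^*}\ell_s$ grows fast enough in $k$.
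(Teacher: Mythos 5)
The reduction to checking the $\mathsf{DEL}$ criterion, the computation $|g_n(\xi)|^2 = 1 - 4\omega_n(1-\omega_n)\sin^2(\pi\xi/P_n)$ and the exponential bound $|\widehat\mu(\xi)|^2 \le \exp(-c_0\sum_n\sin^2(\pi\xi/P_n))$, and the set-of-uniqueness part are all correct and consistent with the paper's setup. The gap is precisely where you flag it, and it is fatal as stated. You aim for a \emph{pointwise} lower bound $\sum_n \sin^2(\pi\xi/P_n)\gtrsim k/\log k$ for $\xi = hb^m(b^k-1)$, via the claim that a positive proportion of the $q_s$-adic digits of $\xi$ fall in a fixed middle range, and you try to get this from LTE plus ``no long runs of $0$'s or $(q_s-1)$'s.'' But absence of long runs of extreme digits does not force digits into $[q_s/4,3q_s/4]$: a digit string like $1,2,1,2,\ldots$ has no such runs and no middle-range digits, and nothing in LTE rules it out. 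Worse, the pointwise statement itself is likely false for the reason the paper's structure makes visible: Proposition~\ref{3.2-number-count} shows that as $n$ ranges over any interval of length $\ord_{N_r/Q}(b)$, the digit tuple $\Pi_{r_0,r}(n)$ of $hb^n-hb^m$ takes \emph{every} value in $Y_{r_0,r}$ equally often — in particular there exist $n^*$ for which every $\Pi$-position digit is $0$, and for such $n^*$ each of those $\approx\sum_s j_s$ positions contributes only $O(q_s^{-2})$ to the sine sum, so no fixed positive fraction of middle-range digits exists for that $\xi$.

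The paper's key idea, which your proposal is missing, is to replace the pointwise bound by an \emph{averaged} one. Proposition~\ref{3.2-number-count} partitions each interval of length $\ord_{N_r/Q}(b)$ into ``well-distributed'' sets $\Lambda$ on which $\Pi_{r_0,r}$ is a bijection onto $Y_{r_0,r}$. Because every digit tuple appears exactly once in $\Lambda$, the number of $n\in\Lambda$ with fewer than $\lfloor u/6\rfloor$ middle-range digits is controlled by a clean binomial count (Lemma~\ref{cardina-esti}), giving $\sum_{n\in\Lambda}|\widehat\mu(hb^n-hb^m)| \le A\,q_{r_0+1}^{j_{r_0+1}}\cdots q_r^{j_r}e^{-B\sum j_i}$ (Proposition~\ref{round-esti}); summing over intervals and then over $N$ in blocks $(N_{r-1},N_r]$ verifies the $\mathsf{DEL}$ sum (Propositions~\ref{key-prop} and the final estimate~\eqref{4.3eq-4}). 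Establishing that such well-distributed partitions exist is exactly where the multiplicative-order machinery (Theorem~\ref{prime power order}, Lemmas~\ref{esti-kr}, \ref{order-prop-Nr}, Proposition~\ref{propJ}) and the calibration of $\ell_s = s^d$ against $k_s$ are used — not to control the digits of one $\xi$, but to guarantee the surjectivity and equidistribution of $\Pi_{r_0,r}$ over each $\Lambda$. Your proposal would need to be restructured around this averaging device; a pointwise decay along $h(b^n-b^m)$ is not what the theorem asserts or what the paper proves, and is not available by the route you sketch.
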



 We can easily verify that $K_{M_n,\D_n}$ is a set of uniqueness using Theorem \ref{mattila-criterion} with $k_n = M_1\cdots M_n$ and $I = (1/2,1)$. By taking all $\omega_n = 1/2$, Theorem \ref{main-theorem00} follows.

The proof of Theorem \ref{thm-1.5} will be based on checking the criterion (\ref{key criterion}) in Theorem \ref{DEL}. It is the main technical part and will occupy three sections of the paper. The proof shares the same strategy in the proof of \cite[Lemma 2]{Cassels1959} where Cassels proved the middle-third Cantor measure is $b$-normal for $b\not\sim 3$. One of the key ingredient in Cassels's proof was the observation that when we expand $h b^n$, $1\le n\le 3^r-1$ for some $r>0$,  into the $3$-adic expansions, we can exhaust all possible digit expansions. This fact leads to Fourier decay along those frequency values. In our Moran construction, we will have a multiscale digit expansions using $M_n$. By studying carefully the order of integers in the multiplicative group of integers modulo prime powers, we will also obtain well-distributed sets of digit expansions in Proposition \ref{3.2-number-count}. Fourier decay along this set leads to the $\mathsf{DEL}$ criterion. 

In this construction, the Hausdorff dimension of $K$ is zero due to the rapid growth of the sequence $\{ q_n \}_n $ and we have chosen only two digits in all the digit sets $\D_n$. Even if we take $q_n$ to be all prime numbers, by the prime number theorem, the Hausdorff dimension is still equal to 0. However, we will soon see that this forms the basis for produce those desired large dimensional sets.  

\smallskip

\noindent{\underline{\it Step III: Full-dimensional absolutely normal set of uniqueness .}}

In order to create the desired set with Hausdorff dimension 1, we form $M_n$ as in (\ref{def_Mn-introd}). The key tool is to consider the convolution of the measures we found in Theorem \ref{thm-1.5} to form a set of large Hausdorff dimension. We lay out a general procedure and then we will carefully compute the dimensions on the sets in the Section \ref{proof-th-bigset}.

 Recall that $ \mathcal{D}_n = \{ 0,1 \} $ for all $ n \ge 1 $ , and $ \mu $ is the Cantor-Moran measure generated by $ \{ M_n \}_{n \ge 1} $ and $ \{ \mathcal{D}_{n} \}_{n \ge 1} $ in (\ref{eg}) and $\omega_n = 1/2$ for all $n\in\N$, so that we have Theorem \ref{thm-1.5}.    Associated with $\mu$,  we define  $ \{ \mathcal{E}_{n} \}_{n \ge 1} $ to be a sequence of digit sets by
    \begin{equation}\label{eq_E_n}
      \mathcal{E}_{n} =\{0,2,4,\cdots, a_n\}, \textup{ where } 2 < a_n < M_n - 1, \textup{ and } a_n~\textup{is an even number},
    \end{equation}
    for $ n \ge 1 $. Let 
    $$ \nu = \Conv_{n=1}^{ \infty } \dfrac{1}{\# \mathcal{E}_{n}} \sum_{ d\in \mathcal{E}_n } \delta_{ d \cdot (M_1 \cdots M_n)^{-1} }  $$
    be the Cantor-Moran measure generated by $ \{ M_n \}_{n \ge 1} $ and $ \{ \mathcal{E}_{n} \}_{n \ge 1} $. Since convolution of the Dirac measures is supported on the arithmetic sum of the supports, we have that 
    $$ \mu \ast \nu = \Conv_{n=1}^{ \infty } \lambda_n, $$
    where ${\mathcal F}_n = \D_n+{\mathcal E}_n$ and  $ \lambda_n  =  \delta_{\mathcal D_n}\ast \delta_{{\mathcal E}_n} = \delta_{\F_n}$.
    More explicitly, 
    $$
     \mathcal{F}_{n} = \{0,1,\cdots, a_n+1\},
    $$
    \begin{equation*}
        \lambda_n = \frac{1}{\#{\mathcal F}_n} \sum_{j=0}^{a_n+1} \delta_{j \cdot (M_1\cdots M_n)^{-1}}.
    \end{equation*}
     Let $ K $ be the homogeneous Moran set generated by $ \{ M_n \}_{n \ge 1} $ and $ \{ \mathcal{F}_{n} \}_{n \ge 1} $, i.e., 
      \begin{equation}\label{eq_K_large}
          K = \left\{ \sum_{ n = 1 }^{ \infty } \dfrac{ d_n }{ M_1 \cdots M_n }: d_n \in \mathcal{F}_n \textup{ for all }  n  \right\}  = \bigcap_{n=1}^{\infty} \bigcup_{ (d_1,\cdots, d_n)\in \mathcal{F}_1\times...\times{\mathcal F}_n} I_{d_1\cdots d_n}.
      \end{equation} 
      Here $I_{d_1\cdots d_n} = \left[ \sum_{ k = 1 }^{ n } d_k {\mathbf M_k}^{-1}, \sum_{ k = 1 }^{ n } d_k {\mathbf M_k}^{-1}+{\mathbf M_n}^{-1} \right]$, where we write  ${\bf M}_k = M_1\cdots M_k$ in short. Note that $ \{ I_{d_1\cdots d_n} : n \ge 1 \textup{ and } d_i \in \mathcal{F}_i \textup{ for } 1 \le i \le n \} $ are the basic intervals that generate $K$.

\begin{pro}\label{prop-large-set}
\begin{enumerate}
    \item   $\mu\ast\nu$ is a pointwise absolutely normal measure fully supported on $K$. 
    \item Suppose that 
   \begin{equation}\label{liminf-eq}
    \liminf_{n\to\infty}\frac{a_n+1}{M_n}<\frac{5}{6}.
    \end{equation}
    Then $K$ is a set of uniqueness. 
\end{enumerate}
\end{pro}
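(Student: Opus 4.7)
The plan is to handle the two parts separately: (1) rests on the product factorization $\widehat{\mu\ast\nu}=\widehat{\mu}\widehat{\nu}$ combined with Theorem~\ref{thm-1.5}, while (2) rests on the Salem criterion (Theorem~\ref{mattila-criterion}).

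For part (1), I would first identify the support. Since $\mathcal{F}_n=\D_n+\mathcal{E}_n=\{0,1\}+\{0,2,\ldots,a_n\}$ is a \emph{direct} sum (the $\D_n$-factor contributes the parity and the $\mathcal{E}_n$-factor the remaining even value), adding digit-by-digit identifies $\mathrm{supp}(\mu)+\mathrm{supp}(\nu)$ with the Moran set $K$. The convolution $\mu\ast\nu=\Conv_n\lambda_n$ gives positive mass to every basic cylinder $I_{d_1\cdots d_n}$ of $K$, so $\mu\ast\nu$ is fully supported on $K$. For pointwise absolute normality I would use the trivial bound $|\widehat{\nu}(\xi)|\le 1$, which yields $|\widehat{\mu\ast\nu}(\xi)|\le|\widehat{\mu}(\xi)|$ for every $\xi$. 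The Cassels-type proof of Theorem~\ref{thm-1.5} in fact produces the \emph{absolute-value} $\mathsf{DEL}$ estimate
\[
\sum_{N=1}^{\infty}\frac{1}{N^3}\sum_{m,n=0}^{N-1}\bigl|\widehat{\mu}(h(b^n-b^m))\bigr|<\infty
\]
for each integer $b\ge 2$ and each nonzero $h\in\Z$, since it is obtained from pointwise bounds on $|\widehat{\mu}|$ via the trigonometric product representation (\ref{eq-FT}). This stronger bound transfers verbatim to $\mu\ast\nu$, and an application of Theorem~\ref{DEL} gives $(\mu\ast\nu)$-a.e.\ pointwise $b$-normality for every $b\ge 2$.

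For part (2), I would apply Theorem~\ref{mattila-criterion} with $k_j=\mathbf{M}_j:=M_1\cdots M_j$. For every $x=\sum_n d_n/\mathbf{M}_n\in K$ (with $d_n\in\mathcal{F}_n$),
\[
\{k_j x\}=\sum_{n>j}\frac{d_n}{M_{j+1}\cdots M_n}\le\frac{a_{j+1}+1}{M_{j+1}}+\frac{1}{M_{j+1}},
\]
where the tail inequality $\sum_{n>j+1}(a_n+1)/(M_{j+2}\cdots M_n)\le 1$ is the standard telescoping estimate using $a_n+1\le M_n-1$. By the hypothesis $\liminf_n(a_n+1)/M_n<5/6$, choose $\delta>0$ and a subsequence $j_k\to\infty$ such that $(a_{j_k+1}+1)/M_{j_k+1}<5/6-\delta$. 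Because successive blocks of $\{M_n\}$ use strictly larger primes $q_s\to\infty$, we have $M_n\to\infty$ as $n\to\infty$, so for $k$ large
\[
\{k_{j_k}x\}<\frac{5}{6}-\delta+\frac{1}{M_{j_k+1}}<\frac{5}{6}-\frac{\delta}{2}
\]
uniformly in $x\in K$. Therefore $\{k_{j_k}x\}$ avoids the non-degenerate interval $I:=(5/6-\delta/4,1)\subset[0,1]$, and Theorem~\ref{mattila-criterion} concludes that $K$ is a set of uniqueness.

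The main obstacle I foresee is that Theorem~\ref{thm-1.5} as stated only produces the oscillatory $\mathsf{DEL}$ sum, not its absolute-value counterpart; transferring the criterion from $\mu$ to $\mu\ast\nu$ requires reading off stronger pointwise control on $|\widehat{\mu}(h(b^n-b^m))|$ from inside its proof. This should be automatic for any Cassels-type argument built on Fourier absolute-value bounds, but it must be noted and used explicitly.
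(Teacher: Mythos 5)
Your proposal is correct and follows essentially the same route as the paper: part (1) uses $|\widehat{\mu\ast\nu}|\le|\widehat{\mu}|$ together with the fact that the proof of Theorem~\ref{thm-1.5} actually controls the absolute-value $\mathsf{DEL}$ sum, and part (2) applies Theorem~\ref{mattila-criterion} with $k_j=M_1\cdots M_{n_j-1}$ along the subsequence provided by the liminf hypothesis. The obstacle you flag is real but harmless, since the paper's proof of Theorem~\ref{thm-1.5} (see (\ref{4.3eq-3})) establishes exactly the absolute-value estimate that its own proof of the proposition invokes; your tail bound $1/M_{j+1}$ in part (2) is just a slightly sharper variant of the paper's geometric bound $\sum_{i\ge1}7^{-i}=1/6$.
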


\begin{proof}
    \noindent (1).  We just need to use the fact that $\vert \widehat{\mu\ast\nu}\vert = |\widehat{\mu}||\widehat{\nu}|\le |\widehat{\mu}|$ as $ \vert \widehat{\nu}( \cdot ) \vert $ is bounded above by one, we obtain for all integer $ b \ge 2 $ and non-zero integer $ h $, 
    $$
        \sum_{ N = 1 }^{ \infty } \frac{1}{N^3} \sum_{ m=0 }^{ N-1 }\sum_{ n=0 }^{ N-1 } \vert \widehat{ \mu \ast \nu }(  h(b^{n} - b^{m} ) ) \vert 
        \le  \sum_{ N = 1 }^{ \infty } \frac{1}{N^3} \sum_{ m=0 }^{ N-1 }\sum_{ n=0 }^{ N-1 } \vert \widehat{ \mu }(  h(b^{n} - b^{m} ) )  \vert .
  $$
    It follows immediately from  Theorem \ref{thm-1.5} that the above sum is finite. This convergence implies that $ \mu \ast \nu $ is pointwise absolutely normal by Theorem \ref{DEL}.

   \noindent (2). The assumption (\ref{liminf-eq}) shows that we can find some $ c < 5/6 $ and a subsequence $ \{ n_j \}_{ j \ge 1 } $ such that $( a_{n_j} + 1 )M_{n_j}^{-1}\le c $. Recall that for all $x\in K$, $x$ is represented as
    $$x=\sum_{n=1}^{\infty}\dfrac{ d_n }{ M_1 \cdots M_n }, ~\textup{where}~ d_n \in \mathcal{F}_n \textup{ for } n \ge 1 . $$ 
     Let $ \{ k_j \}_{ j \ge 1 } $ be a sequence of integers defined by $ k_{ j } = M_1 \cdots M_{ n_j - 1 } $, $ j \ge 1 $. For all $ x\in K $ and $ j \ge 1 $, the fractional part of $ k_{ j } x $ is less than or equal to 
     \begin{align*}
         & \dfrac{ a_{n_j}+1 }{M_{n_j}} + \sum_{i=1}^{ \infty } \dfrac{ M_{n_j + i} - 1 }{M_{n_j} \cdots M_{n_j + i}} \\
         \le & c+ \sum_{i=1}^{\infty} \dfrac{1}{7^{i}} = c+\frac16<1,
     \end{align*}
     where the first inequality holds since $M_n$ are the primes $q_n$ in which we have chosen them to be at least 7. 
It follows that for all $ x \in K $ and $ j \ge 1 $, 
$$ \{ k_j x \} \notin \left( c+\frac16,1 \right). $$
This shows that $ K $ is a set of uniqueness by Theorem \ref{mattila-criterion}.   
\end{proof}
To prove Theorem \ref{main-theorem}, we need to choose a Moran measure $\nu$ so that $\mu\ast \nu$ has unequal weight. However, the Hausdorff dimension of the measure $\mu\ast\nu$ is still equal to 1, which means every Borel set $E$ with $\mu\ast\nu (E)>0$ has Hausdorff dimension 1. The careful dimensional calculation will be given in Section \ref{proof-th-bigset}.

The rest of the paper will be devoted to proving Theorem \ref{thm-1.5} and Theorem \ref{main-theorem}. It  will be organized as follows. In Section \ref{sec-2}, we provide some number theoretic preliminaries. In Section \ref{add_sec}, we set up all the notations and constants that will be used in the proof Theorem \ref{thm-1.5}. In Section \ref{sec-3}, we obtain the structural information of the digit distribution that are prepared for the main estimates. In Section \ref{sec-4}, we combine the conclusions in Section \ref{add_sec} and Section \ref{sec-3} to complete the proof of Theorem \ref{thm-1.5}. In Section \ref{proof-th-bigset}, we prove Theorem \ref{main-theorem}  and other dimensional results. We end the paper with some open questions in Section \ref{sec-7}.

\section{Preliminaries}\label{sec-2}

We will present results in elementary number theory that we will use in this paper. Most of them are standard (readers can skip this section if wanted). Since we will be applying these properties in some complicated expressions, it is more convenient to list out all these properties for the readers.

\subsection{Congruence, greatest common divisor and least common multiple} Let $ n $ be a positive integer. Two integers $ a $ and $ b $ are said to be \textit{congruent modulo} $ n $ if there exists some integer $ k $ such that $ a - b = kn  $. Usually, we use the following notation 
$$ a \equiv b \ ( \textup{mod} \ n). $$
Let $ a $ and $ b $ be two positive integers. We use $ \gcd(a,b) $ to denote the \textit{greatest common divisor (gcd)} of $ a $ and $ b $. We say $ a $ is \textit{coprime} to $ b $ if $ \gcd(a,b) = 1 $. We call $ b $ is divisible by $ a $ if there exists an integer $ k $ such that $ b = ka $. This is written as $ a|b $. The remainder of $b$ is divided by $a$ is written as $b \ (\mbox{mod} \ a)$ The following properties about gcd are well known (see e.g. \cite{Burton2010}).
\begin{lem}\label{basic-prop-1}
    Let $ a,b,c,d $ be positive integers. The following statements hold: 
    \begin{itemize}
        \item[(i)] If $ \gcd(a,c) = 1 $, then $ \gcd( a,b ) = \gcd ( a, bc ) $.
        \item[(ii)] If $ \gcd(a,b) = d $, then $ \gcd(a/d, b/d) = 1 $.
        \item[(iii)] Suppose $ \gcd( c , d) = 1 .$ Then $ a \equiv b \ (\textup{mod} \ c) $ if and only if $ da \equiv db \ (\textup{mod} \ c) $.
        \item[(iv)] $ a \equiv b \ (\textup{mod} \ c) $ if and only if $ da \equiv db \ (\textup{mod} \ dc) $.
        \item[(v)] $ a \equiv b \ (\textup{mod} \ c) $ if and only if $ a - d \equiv b - d \ (\textup{mod} \ c) $.
        \item[(vi)] Suppose $ \gcd(a,b) = 1 $. If $ a \vert c $ and $ b \vert c $, then $ ab \vert c $. 
        \end{itemize}
 \end{lem}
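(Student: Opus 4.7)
The plan is to derive all six items from two standard tools: Bezout's identity and Euclid's lemma (the latter being the statement that if $\gcd(a,b)=1$ and $a \mid bc$, then $a \mid c$). Nothing here requires a clever construction, so the proof will just systematically unpack divisibilities.

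For (i), I would apply Bezout to the hypothesis $\gcd(a,c)=1$ to obtain integers $x,y$ with $ax+cy=1$, then multiply by $b$ to get $abx+bcy=b$. Any common divisor of $a$ and $bc$ then divides $b$, giving $\gcd(a,bc)\mid \gcd(a,b)$. The reverse inequality is immediate because $\gcd(a,b)$ divides $b$ hence $bc$. For (ii), writing $a=da'$, $b=db'$ and supposing some $e>1$ divides both $a'$ and $b'$ would make $de$ a common divisor of $a$ and $b$ exceeding $d$, contradicting the maximality of $d$.

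For the congruence parts, I would unfold the definition and track divisibilities. For (iii), the forward direction is $c\mid(a-b) \Rightarrow c\mid d(a-b)$, immediate; the reverse uses $\gcd(c,d)=1$ together with Euclid's lemma to conclude $c\mid (a-b)$. Item (iv) is the tautology $c\mid(a-b) \iff dc\mid d(a-b)$, and item (v) follows from $(a-d)-(b-d)=a-b$. For (vi), I would write $c=ak$, note $b\mid ak$, and apply Euclid's lemma with $\gcd(a,b)=1$ to get $b\mid k$, so $k=bm$ and $c=abm$.

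The main obstacle is essentially nothing conceptual, since each item is a one-line consequence of Bezout or Euclid. The only care needed is to match each hypothesis to the correct tool: coprimality must be invoked precisely where divisibility cancellation occurs (items (i), (iii), and (vi)), while (iv) and (v) are purely formal and (ii) is just maximality of the gcd. Given that all six statements are standard and independent, the cleanest presentation is to dispatch each in one or two sentences with the pattern indicated above.
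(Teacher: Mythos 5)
Your proof is correct in all six items: the Bezout argument for (i), the maximality argument for (ii), Euclid's lemma for the reverse directions of (iii) and for (vi), and the formal manipulations for (iv) and (v) are all sound. The paper itself offers no proof of this lemma, citing it as well known (Burton's textbook), so your write-up is simply a standard verification of the omitted details and matches the intended elementary arguments.
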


 The following simple lemma  will be used in Section \ref{sec-3}. 
 \begin{lem}\label{basic-prop-2}
     Let $ a,b,c $ be positive integers and $A$ a subset of $ \mathbb{N} \cup \{ 0 \} $. The following statements hold: 
     \begin{itemize}
        \item[(i)] Suppose $ \gcd(a,c) = 1 $, then 
        $$ \#\{  ax   \ (\textup{mod }   c) : x\in A \} = \#\{ x \ (\textup{mod} \ c): x\in A \}. $$
        \item[(ii)] $ \# \{ x-b \ (\textup{mod } c): x\in A \} = \# \{ x \ (\textup{mod } c): x\in A \} $.
        \item[(iii)] Suppose $ b \vert c $ and $ b \vert x $, for all $ x \in A $. Then 
        $$ \#\{  x   \ (\textup{mod}  \ c) : x\in A \} = \#\{  x/b   \ (\textup{mod}  \ c/b) : x\in A \} . $$
     \end{itemize}
 \end{lem}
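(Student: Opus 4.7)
The plan is to prove each of the three parts by exhibiting that the corresponding map on $\mathbb{Z}/c\mathbb{Z}$ (multiplication by $a$, translation by $-b$, or division by $b$) is a bijection on residue classes, so that two elements of $A$ produce the same residue on one side of the claimed equality if and only if they produce the same residue on the other. Each of the three bijections is already recorded in Lemma \ref{basic-prop-1}, so the content is essentially a rephrasing of those items at the level of counting equivalence classes in $A$.

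For part (i), I would introduce the two equivalence relations $x \sim_1 y \Longleftrightarrow x \equiv y \pmod{c}$ and $x \sim_2 y \Longleftrightarrow ax \equiv ay \pmod{c}$ on $A$, and observe that the quantity on each side of the identity counts the number of $\sim_i$-equivalence classes. Since $\gcd(a,c)=1$, Lemma \ref{basic-prop-1}(iii) shows that $\sim_1$ and $\sim_2$ coincide, so the numbers of classes match. Part (ii) is the same strategy: Lemma \ref{basic-prop-1}(v) gives $x \equiv y \pmod{c} \Longleftrightarrow x - b \equiv y - b \pmod{c}$, which equates the corresponding equivalence relations on $A$. For part (iii), the assumption $b \mid x$ for every $x \in A$ lets me write $x = b x'$, and the assumption $b \mid c$ lets me write $c = b c'$; Lemma \ref{basic-prop-1}(iv) then gives $b x'_1 \equiv b x'_2 \pmod{bc'} \Longleftrightarrow x'_1 \equiv x'_2 \pmod{c'}$, which is exactly the equivalence of the two cardinalities.

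I do not anticipate any real obstacle: the lemma is a bookkeeping consequence of Lemma \ref{basic-prop-1}. The only small point of care is in part (iii), where I must check that the map $x \mapsto x/b$ on $A$ is well defined, which is immediate from the hypothesis that $b$ divides every $x \in A$, and that $c/b$ is a positive integer, which is immediate from $b \mid c$.
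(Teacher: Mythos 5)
Your proof is correct and follows essentially the same route as the paper: both arguments reduce each part of the lemma to the corresponding biconditional in Lemma \ref{basic-prop-1} (items (iii), (v), and (iv) respectively), then observe that the number of distinct residues is the number of equivalence classes under the relevant congruence. Your write-up is a bit more explicit about the equivalence-class bookkeeping, but there is no substantive difference.
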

 \begin{proof}
     To prove part (i), let $ x_1, x_2 \in A $. By Lemma \ref{basic-prop-1} (iii), $ a x_1 \equiv a x_2 \ (\textup{mod} \ c) $ if and only if $ x_1 \equiv x_2 \ (\textup{mod} \ c) $. This implies part (i).  Similarly, part (ii) and part (iii) are direct corollaries of Lemma \ref{basic-prop-1} (v) and (iv) respectively.
 \end{proof}

 Let $ n $ be a positive integer, and let $ a_1, \cdots, a_n $ be $ n $ positive integers. The \textit{least common multiple} of $ a_1, \cdots, a_n $ is the least positive integer that is divisible by each $ a_i, 1 \le i \le n $. It is denoted by $ \textup{lcm} ( a_1, \cdots, a_n ) $. The following properties are also well known. We will skip the proof.
 \begin{lem}\label{basic-prop-1-lcm}
     Let $ n $ be a positive integer. Let $ a_1, \cdots, a_n $ and $ b_1, \cdots, b_n $ be positive integers. The following statements hold:
     \begin{itemize}
         \item[(i)] $ \textup{lcm} (a_{1}, \cdots, a_{n}) \le a_{1} \cdots a_{n} $.  
         \item[(ii)] $ \textup{lcm} (a_1 b_1, \cdots, a_n b_n ) \le a_1 \cdots a_n \cdot \textup{lcm} ( b_1, \cdots, b_n )  $.
         \item[(iii)] Let $ k $ be a positive integer. Then $ \textup{lcm} ( ka_{1},\cdots, ka_n ) = k \cdot \textup{lcm} (a_1 , \cdots, a_n )  $.
         \item[(iv)] Suppose  $ n \ge 2 $ and $ \gcd( a_i, b_1 ) = 1 $, for $ 2 \le i \le n $. Then $$ \textup{lcm}( a_{1}b_{1}, a_{2}, \cdots, a_{n} ) = b_1 \cdot \textup{lcm}( a_{1}, a_{2}, \cdots, a_{n} ). $$ 
         \item[(v)] Suppose for all $ 1 \le i \neq j \le n $, $ \gcd( a_i, a_j ) = 1 $. Then 
         $$ ( a_1 \cdots a_n ) \mid \textup{lcm} ( a_1 b_1, \cdots, a_n b_n ).  $$
     \end{itemize}
 \end{lem}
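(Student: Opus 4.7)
The plan is to prove each of the five parts using only the definition of $\textup{lcm}$ together with the basic divisibility facts from Lemma \ref{basic-prop-1}. A convenient unifying tool is the $p$-adic valuation $v_p$: for every prime $p$ and positive integers $m_1,\ldots,m_n$, one has $v_p(\textup{lcm}(m_1,\ldots,m_n))=\max_i v_p(m_i)$ and $v_p(m\cdot m')=v_p(m)+v_p(m')$. All five items then reduce to short bookkeeping.

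For (i), the product $a_1\cdots a_n$ is itself a common multiple of each $a_i$, so the defining minimality of $\textup{lcm}$ gives $\textup{lcm}(a_1,\ldots,a_n)\le a_1\cdots a_n$. For (ii), similarly $a_1\cdots a_n\cdot\textup{lcm}(b_1,\ldots,b_n)$ is divisible by every $a_i b_i$, since $a_i\mid a_1\cdots a_n$ and $b_i\mid\textup{lcm}(b_1,\ldots,b_n)$; the inequality again follows by minimality. For (iii) I would compare $p$-adic valuations of both sides: for each prime $p$,
\[
v_p\bigl(k\cdot\textup{lcm}(a_1,\ldots,a_n)\bigr)=v_p(k)+\max_i v_p(a_i)=\max_i\bigl(v_p(k)+v_p(a_i)\bigr)=v_p\bigl(\textup{lcm}(ka_1,\ldots,ka_n)\bigr),
\]
which forces equality of the two integers.

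For (iv), the forward direction is immediate: $b_1\cdot\textup{lcm}(a_1,\ldots,a_n)$ is divisible by $a_1 b_1$ and by each $a_i$ with $i\ge 2$, so it is a common multiple and thus $\textup{lcm}(a_1 b_1,a_2,\ldots,a_n)$ divides it. For the reverse, let $M=\textup{lcm}(a_1 b_1,a_2,\ldots,a_n)$; then $b_1\mid a_1b_1\mid M$, and for $i\ge 2$ we have $a_i\mid M$ with $\gcd(a_i,b_1)=1$, so by Lemma \ref{basic-prop-1}(vi) the product $a_i b_1$ divides $M$. Consequently $M/b_1$ is a common multiple of $a_1,\ldots,a_n$, hence $\textup{lcm}(a_1,\ldots,a_n)\mid M/b_1$, i.e.\ $b_1\cdot\textup{lcm}(a_1,\ldots,a_n)\mid M$, giving the matching divisibility. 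Part (v) proceeds analogously: writing $L=\textup{lcm}(a_1 b_1,\ldots,a_n b_n)$, each $a_i$ divides $a_i b_i$ and hence $L$; then, using the pairwise coprimality of the $a_i$ together with an inductive application of Lemma \ref{basic-prop-1}(vi) (if $d,e\mid L$ and $\gcd(d,e)=1$, then $de\mid L$), we obtain $a_1\cdots a_n\mid L$.

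There is essentially no hard step here; the only place where one must be careful is keeping track of the coprimality hypotheses in (iv) and (v), where Lemma \ref{basic-prop-1}(vi) is invoked to promote simultaneous divisibility into divisibility by a product. Everything else is a direct unwinding of the definitions.
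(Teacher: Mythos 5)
Your proof is correct, and it follows essentially the same elementary route the paper has in mind (the paper states these facts as well known and omits the proof; its drafted argument for (i), (ii) and (iv) uses exactly the minimality of the lcm together with Lemma \ref{basic-prop-1}(vi), as you do). The only cosmetic difference is that you handle (iii) via $p$-adic valuations and phrase (iv) as a two-sided divisibility rather than a pair of inequalities, and your inductive use of Lemma \ref{basic-prop-1}(vi) in (v) correctly supplies the coprimality of the partial product with the next factor, so there is no gap.
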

\subsection{Euler's totient function and order of an integer modulo  $ n $}\label{subsec-Carmichael}

For $ n \ge 1 $, let $ \phi(n) $ denote \textit{Euler's totient function}, which is the number of positive integers not exceeding $ n $ that are coprime to $ n $. Let $ n \in \mathbb{N} $. It is well known $ \phi(n) $ satisfies the following formula
\begin{equation}\label{phi(n)}
     \phi( n ) = n \cdot \prod_{ p \mid n }\left( 1 - \dfrac{1}{p} \right), 
\end{equation}
(See e.g. \cite[Theorem 7.3]{Burton2010}). In particular, if  $ p $ is a prime number and $ k $ is a positive integer, then  $ \phi( p^k ) = ( p - 1 )p^{ k - 1 } $ (See e.g. \cite[Theorem 7.1]{Burton2010}). 

Fix the integer $ n \ge 2 $. Let $ a \ge 2 $ be an integer satisfying $ \gcd(a,n) = 1 $. The smallest positive integer $ m $ satisfying 
\begin{equation}\label{order}
    a^{ m } \equiv  1 \ ( \text{mod} \ n) 
\end{equation}
 is called \textit{the order of a modulo n}. We denote this value by $ \ord_{n}(a)$. In particular, $\{a^m: m\in\N\}$ forms a cyclic group of order $\ord_n(a)$ in  $\Z^{\times}_n$, the multiplicative group of integers (mod $n$) that are coprime to $n$. We record these properties in the following lemma.

\begin{lem}\cite[Theorem 8.1, Theorem 8.2]{Burton2010}\label{modulo-order}
    Let $ a, n \ge 2 $ be two integers with $ \gcd (a,n) = 1 $. Then 
    \begin{itemize}
    \item[(i)] Let $ h $ be a non-negative integer. Then $ a^h \equiv 1 \ (\textup{mod} \ n) $ if and only if $ \ord_{n}(a) \vert h $. 
    \item[(ii)] $ a^{ m_1 } \equiv a^{ m_2 } \ ( \textup{mod } n ) $ if and only if $ m_1 \equiv m_2 \ ( \textup{mod } \ord_{n}(a) ) $.
    \end{itemize}
\end{lem}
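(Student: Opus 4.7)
The plan is to deduce both statements directly from the definition of $\ord_{n}(a)$ as the minimum positive integer $m$ satisfying $a^{m}\equiv 1\ (\textup{mod } n)$, together with the hypothesis $\gcd(a,n)=1$. The coprimality serves two purposes: it ensures that $\ord_{n}(a)$ is well defined (since by Euler's theorem $a^{\phi(n)}\equiv 1\ (\textup{mod } n)$, the set of positive exponents returning $1$ is nonempty, and one takes its minimum), and it allows cancellation of powers of $a$ modulo $n$ via Lemma \ref{basic-prop-1}(iii).

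For part~(i), I would first dispatch the easy direction: if $\ord_{n}(a)\mid h$, write $h = q\cdot \ord_{n}(a)$ with $q\ge 0$ and raise $a^{\ord_{n}(a)}\equiv 1\ (\textup{mod } n)$ to the $q$-th power to obtain $a^{h}\equiv 1\ (\textup{mod } n)$. For the converse, assume $a^{h}\equiv 1\ (\textup{mod } n)$ and apply the division algorithm to write $h = q\cdot \ord_{n}(a) + r$ with $0 \le r < \ord_{n}(a)$. Substituting into $a^{h}\equiv 1$ and using $a^{q\cdot\ord_{n}(a)}\equiv 1$ together with Lemma \ref{basic-prop-1}(iii) to cancel yields $a^{r}\equiv 1\ (\textup{mod } n)$; the minimality built into the definition of $\ord_{n}(a)$ then forces $r=0$, so $\ord_{n}(a)\mid h$.

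For part~(ii), I would reduce to part~(i). Assume without loss of generality that $m_{1}\ge m_{2}$ (the statement is symmetric in $m_{1},m_{2}$). Since $\gcd(a,n)=1$, also $\gcd(a^{m_{2}},n)=1$, so Lemma \ref{basic-prop-1}(iii) gives that $a^{m_{1}}\equiv a^{m_{2}}\ (\textup{mod } n)$ is equivalent to $a^{m_{1}-m_{2}}\equiv 1\ (\textup{mod } n)$. Applying part~(i) with $h = m_{1}-m_{2}$ converts this to $\ord_{n}(a)\mid(m_{1}-m_{2})$, which is precisely $m_{1}\equiv m_{2}\ (\textup{mod } \ord_{n}(a))$.

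There is no genuine obstacle: both claims are standard textbook facts resting only on the division algorithm and cancellation in the unit group $(\mathbb{Z}/n\mathbb{Z})^{\times}$. The only care needed is to invoke $\gcd(a,n)=1$ at the two places where it is used — once to guarantee that $\ord_{n}(a)$ exists, and once to justify the cancellation step — and, in part~(ii), to use the symmetry in $m_{1}$ and $m_{2}$ to reduce to the case $m_{1}\ge m_{2}$ so that $m_{1}-m_{2}$ is a non-negative integer to which part~(i) applies.
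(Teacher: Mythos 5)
Your proof is correct and is the standard textbook argument; the paper itself does not prove this lemma but simply cites it to Burton, so there is no internal proof to compare against. Your argument via the division algorithm for part (i) and the reduction of part (ii) to part (i) using cancellation in $(\mathbb{Z}/n\mathbb{Z})^{\times}$ is exactly the proof one would find in the cited reference.
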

By the famous Euler's theorem, $a^{\phi(n)}\equiv 1 \ ( \text{mod} \ n) $. Hence, by Lemma \ref{modulo-order} (i), we know that $ \ord_{n}(a) \mid \phi(n)$. In particular, $ \ord_{n}(a) \le  \phi(n)$. However, it is not always the case that the equality holds. A major part in elementary number theory is to determine for which $n$  there exists an $a$, known as the {\it primitive root}, such that $\ord_n(a) = \phi(n)$. However, we are going to investigate the other opposite side by making use of the fact that $\ord_n(a) < \phi(n)$ if $n$ is a large prime power.  They play a key role in constructing our example.

\begin{thm}\cite[Theorem 3.6]{Nathanson2000}\label{prime power order}
    Let $ p $ be an odd prime. Let $ a \ge 2 $ be an integer with $ \gcd(a,p) = 1 $. Let $ d = \ord_{p}(a) $. Let $ k = k(a,p) $ be the largest integer such that $ a^d \equiv 1 ( \textup{mod } p^k ) $. Then
    \begin{equation}
        \ord_{ p^j }( a ) = \begin{cases}
            d, & j = 1, \cdots, k \\
            d \cdot p^{ j - k }, & j \ge k
        \end{cases}
    \end{equation}
\end{thm}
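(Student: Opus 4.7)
The plan is to split the analysis into two cases according to whether $j\le k$ or $j>k$, and to treat the harder second case by a lifting-the-exponent style induction that exploits the hypothesis that $p$ is an odd prime.

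For the case $1\le j\le k$, I would first note that $a^d\equiv 1\pmod{p^k}$ forces $a^d\equiv 1\pmod{p^j}$ for every such $j$, so by Lemma~\ref{modulo-order}(i) we have $\ord_{p^j}(a)\mid d$. For the reverse divisibility, reducing the congruence $a^{\ord_{p^j}(a)}\equiv 1\pmod{p^j}$ modulo $p$ gives $a^{\ord_{p^j}(a)}\equiv 1\pmod{p}$, and another application of Lemma~\ref{modulo-order}(i) yields $d=\ord_p(a)\mid\ord_{p^j}(a)$. Combining both divisibilities gives $\ord_{p^j}(a)=d$, which matches the first branch of the formula.

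For the case $j\ge k$, my strategy is to prove the more precise statement that, for every $j\ge k$, one can write $a^{d\,p^{j-k}}=1+p^j u_j$ with $\gcd(u_j,p)=1$. The base case $j=k$ is exactly the maximality definition of $k$. For the inductive step, assuming $a^{d\,p^{j-k}}=1+p^j u_j$ with $p\nmid u_j$, I would raise both sides to the $p$-th power and expand by the binomial theorem:
\[
a^{d\,p^{j-k+1}}=\bigl(1+p^j u_j\bigr)^p = 1+p\cdot p^j u_j+\sum_{i=2}^{p}\binom{p}{i}(p^j u_j)^i.
\]
The second term contributes $p^{j+1}u_j$, and the remaining terms have $p$-adic valuation at least $j+2$ for $i=2$ (here is where oddness of $p$ enters, since the factor $\binom{p}{2}=p(p-1)/2$ is an integer only because $p$ is odd, making its $p$-adic valuation exactly $1$ and giving overall valuation $1+2j\ge j+2$ for $j\ge 1$) and strictly more for higher $i$. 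This forces $a^{d\,p^{j-k+1}}=1+p^{j+1}u_{j+1}$ with $u_{j+1}\equiv u_j\pmod{p}$, closing the induction.

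Once this identity is established, the conclusion is routine: for $j\ge k$, the identity shows $a^{d\,p^{j-k}}\equiv 1\pmod{p^j}$ but $a^{d\,p^{j-k}}\not\equiv 1\pmod{p^{j+1}}$, so $\ord_{p^j}(a)\mid d\,p^{j-k}$. Writing $\ord_{p^j}(a)=d\cdot p^s$ with $0\le s\le j-k$ (using that $d\mid\ord_{p^j}(a)$ as in the first case and that any prime divisor of $\ord_{p^j}(a)/d$ must divide $p^{j-k}$), one rules out $s<j-k$: otherwise the identity at level $k+s$ would force $1+p^{k+s}u_{k+s}\equiv 1\pmod{p^{k+s+1}}$, contradicting $\gcd(u_{k+s},p)=1$. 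The main obstacle is the binomial bookkeeping in the inductive step — particularly ensuring that every term beyond the linear one has $p$-adic valuation strictly greater than $j+1$, which is exactly where the hypothesis $p$ odd is indispensable (for $p=2$ the $i=2$ term has the same valuation as the linear term and the lemma genuinely fails at small $k$).
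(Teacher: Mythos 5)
Your proof is correct: the two-case split, the inductive identity $a^{dp^{j-k}}=1+p^{j}u_{j}$ with $p\nmid u_{j}$ (where oddness of $p$ controls the $i=2$ binomial term), and the divisibility bookkeeping at the end are all sound. The paper cites this result from Nathanson without giving a proof, and your argument is essentially the standard lifting-the-exponent induction found in that reference, so there is nothing to reconcile.
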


Moreover, the following lemma tells us how to compute order for general $n$ if we know the prime powers.

\begin{lem}\cite[Page 71, Theorem 9 \& Exercise 2]{Shockley1967}\label{order-property}
    Let $ a \ge 2 $ be an integer. Let $ p_1, \cdots, p_n $ be $ n $ distinct primes with $ \gcd(a,p_i) = 1 $ for $ 1 \le i \le n $. Let $ m_1, \cdots, m_n $ be $ n $ positive integers. Then 
    $$ \ord_{ p_{1}^{ m_1 } \cdots p_{n}^{ m_n } } ( a ) = \textup{lcm} \left( \ord_{ p_{1}^{ m_1 } }(a), \cdots, \ord_{ p_{n}^{ m_n } }(a)  \right).  $$
\end{lem}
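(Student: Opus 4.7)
Write $N = p_1^{m_1}\cdots p_n^{m_n}$ and set $d := \ord_N(a)$, $d_i := \ord_{p_i^{m_i}}(a)$ for $1 \le i \le n$, and $L := \mathrm{lcm}(d_1,\ldots,d_n)$. The goal is to establish both divisibilities $d \mid L$ and $L \mid d$, after which $d = L$ follows. Throughout, $\gcd(a,N) = 1$ since the $p_i$ are distinct primes each coprime to $a$, so the relevant orders are all defined.

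\textbf{Step 1: $L \mid d$.} From $a^d \equiv 1 \pmod{N}$ and the fact that $p_i^{m_i} \mid N$, I immediately get $a^d \equiv 1 \pmod{p_i^{m_i}}$ for every $i$. Applying Lemma~\ref{modulo-order}(i) to each prime power modulus gives $d_i \mid d$ for each $i$, and the defining property of the least common multiple then yields $L \mid d$.

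\textbf{Step 2: $d \mid L$.} For each $i$, $d_i \mid L$, so Lemma~\ref{modulo-order}(i) applied in modulus $p_i^{m_i}$ gives $a^L \equiv 1 \pmod{p_i^{m_i}}$, i.e.\ $p_i^{m_i} \mid (a^L - 1)$. The prime powers $p_1^{m_1},\ldots,p_n^{m_n}$ are pairwise coprime, so iterating Lemma~\ref{basic-prop-1}(vi) — which states that coprime divisors of a common integer multiply to a divisor — I conclude that $N = p_1^{m_1}\cdots p_n^{m_n}$ divides $a^L - 1$. Equivalently $a^L \equiv 1 \pmod{N}$, and Lemma~\ref{modulo-order}(i) once more gives $d \mid L$.

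\textbf{Conclusion and main obstacle.} Combining Steps 1 and 2 yields $d = L$, which is exactly the claim. There is no real obstacle here: the argument is essentially the Chinese Remainder Theorem wrapped around Lemma~\ref{modulo-order}(i), and each step is a one-line invocation of a result already listed in the preliminaries. The only point requiring a bit of care is making sure the coprimality-to-product step in Step 2 is justified from the already-cited Lemma~\ref{basic-prop-1}(vi) rather than by an external appeal to CRT, but this is handled by a straightforward induction on $n$.
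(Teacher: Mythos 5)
The paper does not prove this lemma; it is cited verbatim from Shockley's textbook. Your proof is correct and is the standard two-sided divisibility argument: $a^d\equiv 1\pmod N$ restricts to each prime-power modulus to give $L\mid d$, while $d_i\mid L$ for each $i$ plus pairwise coprimality of the $p_i^{m_i}$ (via repeated application of Lemma~\ref{basic-prop-1}(vi)) gives $a^L\equiv 1\pmod N$, hence $d\mid L$. Both invocations of Lemma~\ref{modulo-order}(i) are used correctly, and the coprimality step is justified from the paper's own preliminaries rather than by an unexplained appeal to CRT. Nothing to add.
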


We also need the following simple properties about order and $\phi(n)$. 

\begin{lem}\label{ordern|m}
    Let $ a \ge 2 $ be an integer. Let $ n $ and $ m $ be two positive integers satisfying $ \gcd( a, n ) = \gcd( a, m ) = 1 $. Suppose $ n \mid m $. Then
    \begin{itemize}
        \item[(i)] $ \ord_{n}(a) \mid \ord_{m}(a) $. In particular, $ \ord_{n}(a) \le \ord_{m}(a) $. 
        \item[(ii)] $ \phi(n) \mid \phi(m) $. In particular, $ \phi(n) \le \phi(m) $.
    \end{itemize}
\end{lem}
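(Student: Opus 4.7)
The plan is to prove the two parts separately, with each reducing directly to the structural results already recorded in this section. Neither part presents a serious obstacle; the whole lemma is a bookkeeping consequence of prime factorization and the characterization of order in Lemma \ref{modulo-order}.

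For part (i), I would start by letting $d = \ord_{m}(a)$, so by definition $a^{d} \equiv 1 \ (\textup{mod } m)$. Since $n \mid m$, the same congruence descends to the quotient: $a^{d} \equiv 1 \ (\textup{mod } n)$. Applying Lemma \ref{modulo-order}(i) with exponent $h = d$ and modulus $n$ then forces $\ord_{n}(a) \mid d$, which is exactly $\ord_{n}(a) \mid \ord_{m}(a)$. The inequality $\ord_{n}(a) \le \ord_{m}(a)$ is immediate since both quantities are positive integers.

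For part (ii), the natural route is to use the formula (\ref{phi(n)}) together with the multiplicativity of $\phi$ on coprime arguments and the identity $\phi(p^k) = p^{k-1}(p-1)$ for prime powers. Write the prime factorization $n = p_{1}^{a_1} \cdots p_{r}^{a_r}$. Since $n \mid m$, the factorization of $m$ necessarily has the form $m = p_{1}^{b_1} \cdots p_{r}^{b_r} \cdot q_{1}^{c_1} \cdots q_{s}^{c_s}$, where $b_i \ge a_i$ for each $i$ and the $q_j$ are primes distinct from all the $p_i$. A direct computation then gives
\[
\frac{\phi(m)}{\phi(n)} = \prod_{i=1}^{r} p_{i}^{b_i - a_i} \cdot \prod_{j=1}^{s} q_{j}^{c_j - 1}(q_j - 1),
\]
which is a positive integer. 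This yields $\phi(n) \mid \phi(m)$ and in particular the inequality. The only point to be careful about is handling the primes that divide $m$ but not $n$, which is precisely why the product is split into two factors above.
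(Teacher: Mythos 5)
Your proof is correct and for part (i) it is exactly the paper's argument: reduce the congruence $a^{\ord_m(a)}\equiv 1 \ (\textup{mod } m)$ modulo $n$ using $n\mid m$, then invoke Lemma \ref{modulo-order}(i). For part (ii) the paper simply cites an exercise in Burton rather than giving a proof, and your direct computation via prime factorizations, with $\phi(m)/\phi(n)=\prod_i p_i^{b_i-a_i}\cdot\prod_j q_j^{c_j-1}(q_j-1)$ an integer, is the standard argument and checks out.
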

\begin{proof}
For part (i), denote $ t = \ord_{m}(a) $ and $ s = \ord_{n}(a) $. It follows that $ m \mid a^t - 1 $. Since $ n \mid m $, we get $ n \mid a^t - 1 $ (i.e. $ a^t \equiv 1 $(mod $ n $)). By Lemma \ref{modulo-order} (i), it follows $ s \mid t $. 

For part (ii), reader may refer to \cite[Problems 7.2, Exercise 13]{Burton2010}.
\end{proof}

Throughout the remainder of this paper, a discrete  interval $[a : b] \  \mbox{or}  \ (a: b) \subset \mathbb{Z}$ is the set 
$$
[a~:~b] = \{n\in\Z: a\le n\le b\}, \ (a~:~b) = \{n\in\Z: a< n<b\}
$$
We say that a discrete interval $I$ has length $m$ if  $\#I= m$ (\# denotes the cardinality).   Lemma \ref{modulo-order} (2) immediately implies the following corollary. 

\begin{cor}\label{residual-class}
    Suppose $ a, n \ge 2 $ are integers with $ \gcd (a,n) = 1 $, and let $ I$ be a discrete interval in $\mathbb{Z}_{\ge 0} $ with length at least $ \ord_{n}(a) $. Then 
    $$ \# \{ a^m \textup{ (mod } n) : m \in I \} = \ord_{n}(a). $$  
     Consequently, suppose $I_0$ is a discrete interval of length $\ord_{n}(a).$ Then all elements in $\{a^m \textup{ (mod } n) : m\in I_0\}$ are distinct and it consists of exactly $\ord_{n}(a)$ elements. 
\end{cor}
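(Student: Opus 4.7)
The plan is to deduce both statements directly from Lemma \ref{modulo-order}(ii), which identifies $a^{m_1}\equiv a^{m_2}\pmod n$ with $m_1\equiv m_2\pmod{\ord_n(a)}$. Write $d=\ord_n(a)$ for brevity. Then the map $m\mapsto a^m\pmod n$ factors through the residue $m\pmod d$, and it is injective on any transversal of $\mathbb{Z}/d\mathbb{Z}$.

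For the second assertion I would start with a discrete interval $I_0$ of length exactly $d$. Because any $d$ consecutive integers form a complete set of representatives of $\mathbb{Z}/d\mathbb{Z}$, the elements of $I_0$ are pairwise incongruent modulo $d$. Lemma \ref{modulo-order}(ii) then forces the values $a^m\pmod n$ with $m\in I_0$ to be pairwise distinct, giving exactly $d$ elements in $\{a^m\pmod n:m\in I_0\}$.

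For the first assertion, if $I$ has length at least $d$, pick any sub-interval $I_0\subset I$ of length $d$; the previous paragraph shows $\#\{a^m\pmod n:m\in I\}\ge d$. Conversely, by Lemma \ref{modulo-order}(ii) the number of distinct residues $a^m\pmod n$ over any set of exponents cannot exceed the number of distinct residues of those exponents modulo $d$, which is at most $d$. Combining the two bounds yields the claimed equality. No step is really an obstacle here; the only thing to be careful about is to invoke Lemma \ref{modulo-order}(ii) in both directions (distinctness and the a priori upper bound of $d$).
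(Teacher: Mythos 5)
Your proof is correct and is exactly the argument the paper intends: the paper states this corollary as an immediate consequence of Lemma \ref{modulo-order}(ii) without writing out details, and your two-sided use of that lemma (distinctness on a block of $\ord_n(a)$ consecutive exponents, plus the upper bound since $a^m \pmod n$ depends only on $m \pmod{\ord_n(a)}$) is precisely how that implication goes.
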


\subsection{Digit representation}
Given a sequence of natural numbers $\{M_n\}_{n=1}^{\infty}$ where $M_n\ge2$, we can always use it to represent all natural numbers. For all non-negative integer $ N $, there exists a unique sequence of digits $ \{ \mathsf{d}_{n}(N) \}_{ n \ge 0 } $ where $ 0 \le \mathsf{d}_{n}(N) \le M_n - 1,  $ such that 
\begin{equation}\label{digit-representation}
    N = \mathsf{d}_{0}(N) +\sum_{ n= 1 }^{ \infty } \mathsf{d}_{n}(N) \cdot M_1\cdots M_n,
\end{equation}
and $\mathsf{d}_{n}(N)$ = 0 for all sufficiently large $n$.  Typically, we refer to (\ref{digit-representation}) as the digit representation of the number $ N $ under the \textit{mixed radix bases} $ \{ M_n \}$. 

The following simple lemma, though elementary, but is the key observation that connects the digit representations with the residue classes taking modulo of radix bases. This fact  will be exploited in Section \ref{sec-3} in Lemma \ref{mod&Pi}. Because of its importance, we prove it for completeness. 
\begin{lem}\label{equivalent-condition}
  Let $ a,b $ be two non-negative integers. Then $ a \equiv b \textup{ (mod } M_1\cdots M_n) $ if and only if $ \mathsf{d}_{ j }( a ) = \mathsf{d}_{ j }( b ) $ for all $ 0 \le j \le n - 1 $.
\end{lem}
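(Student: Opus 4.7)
The plan is to derive both directions from the uniqueness of the mixed radix expansion (\ref{digit-representation}), together with the elementary observation that each term $\mathsf{d}_k(N) M_1 \cdots M_k$ with $k \ge n$ is automatically divisible by $M_1 \cdots M_n$.

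For the ``if'' direction, I would simply subtract the two series: assuming $\mathsf{d}_j(a) = \mathsf{d}_j(b)$ for $0 \le j \le n-1$, every term in positions $0, \ldots, n-1$ cancels, leaving
$$a - b = \sum_{k \ge n} \bigl(\mathsf{d}_k(a) - \mathsf{d}_k(b)\bigr)\, M_1\cdots M_k,$$
which is visibly a multiple of $M_1\cdots M_n$.

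For the ``only if'' direction, I would split each expansion at position $n$. Write $a = A_n(a) + R_n(a)$, where
$$A_n(a) := \mathsf{d}_0(a) + \sum_{k=1}^{n-1} \mathsf{d}_k(a)\, M_1\cdots M_k, \qquad R_n(a) := \sum_{k \ge n} \mathsf{d}_k(a)\, M_1\cdots M_k,$$
and likewise for $b$. The tails $R_n(a),\, R_n(b)$ are multiples of $M_1\cdots M_n$, so the hypothesis $a \equiv b \pmod{M_1\cdots M_n}$ reduces to $A_n(a) \equiv A_n(b) \pmod{M_1\cdots M_n}$. A short telescoping based on the digit bounds yields $0 \le A_n(a),\, A_n(b) \le M_1\cdots M_n - 1$, so in fact $A_n(a) = A_n(b)$. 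Uniqueness of the mixed radix representation of this common integer then forces $\mathsf{d}_j(a) = \mathsf{d}_j(b)$ for every $0 \le j \le n-1$.

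The only routine technicality is the telescoping bound $A_n(\cdot) \le M_1\cdots M_n - 1$, which comes from maximizing each digit. An equally clean alternative that sidesteps it is an induction on $n$: the base case $n=1$ holds because $\mathsf{d}_0(a)$ and $\mathsf{d}_0(b)$ are the residues of $a,b$ modulo $M_1$; for the inductive step, $a \equiv b \pmod{M_1 \cdots M_{n+1}}$ implies the weaker congruence modulo $M_1 \cdots M_n$, so the induction hypothesis supplies the first $n$ digits, and peeling them off reveals $M_{n+1} \mid \bigl(\mathsf{d}_n(a) - \mathsf{d}_n(b)\bigr)$, which by the digit bound forces equality.
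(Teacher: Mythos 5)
Your proposal is correct and follows essentially the same route as the paper: both directions rest on truncating the expansion at position $n$, noting the truncated sums lie in $\{0,\ldots,M_1\cdots M_n-1\}$ so the congruence forces them to be equal, and then recovering the digits (the paper peels them off by induction modulo $M_1, M_2,\ldots$, which is exactly your alternative inductive argument, while citing uniqueness of the mixed radix representation is an equivalent shortcut). No gaps.
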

\begin{proof}
If $ \mathsf{d}_{ j }( a ) = \mathsf{d}_{ j }( b ) $ for all $ 0 \le j \le  n - 1 $, then taking modulo of $M_1\cdots M_n$ in (\ref{digit-representation}), we see that 
$$
a\equiv\mathsf{d}_{0}(a) +\sum_{ j= 1 }^{ n-1 } \mathsf{d}_{j}(a) \cdot M_1\cdots M_j \ (\mbox{mod} ~ M_1\cdots M_n). 
$$
Hence, $a\equiv b ~\textup{ (mod } M_1\cdots M_n)$. Conversely, if $ a \equiv b \textup{ (mod } M_1\cdots M_n)$, then 
$$
\mathsf{d}_{0}(a) +\sum_{ j= 1 }^{ n-1 } \mathsf{d}_{j}(a) \cdot M_1\cdots M_j = \mathsf{d}_{0}(b) +\sum_{ j= 1 }^{ n-1 } \mathsf{d}_{j}(b) \cdot M_1\cdots M_j
$$
and the sum is equal because the above sum are lying in $\{0,\cdots, M_1\cdots M_n-1\}$. But $0\le \mathsf{d}_{0}(a),\mathsf{d}_{0}(b)\le M_1-1$ and the above implies that $\mathsf{d}_{0}(a)\equiv \mathsf{d}_{0}(b)$ (mod $M_1$), so $\mathsf{d}_{0}(a)= \mathsf{d}_{0}(b)$. We can proceed by induction to show that all other digits are equal. 
\end{proof}

\section{Proof of Theorem \ref{thm-1.5} (I): Setup and Notations}\label{add_sec}

The next three sections will be devoted to proving Theorem \ref{thm-1.5}. We need to show that for the measure in (\ref{eg}), it satisfies the $\mathsf{DEL}$ criterion for all integers $ b \ge 2 $ in Theorem \ref{DEL}. In this section, we will set up all the necessary notations and  constants that will be used throughout.

 As in the introduction, we fix a strictly increasing subsequence of primes $ \{ q_n \}_n $ with $q_n = O(n^d)$ for some $d\ge 1$ , and $q_1\ge 7$ (We need a slightly larger prime to make the estimation of trigonometric polynomials in Lemma \ref{cos-esti} holds). Define also $ \ell_n =  n^d $ for $ n \ge 1 $. Let 
\begin{equation}\label{Nr}
    N_r = \begin{cases}
        & 1, \quad  \textup{for } r = 0 \\
        &  q_{1}^{ \ell_1 } \cdots q_{r}^{ \ell_r }, \quad \textup{for } r\ge 1,
    \end{cases}
\end{equation} 
 \begin{equation}\label{def_Mn}
     M_n = q_{s+1}, \textup{ if } n = L_s + (j+1) ~\textup{for unique }   s \ge 0 \textup{ and } 0\le j \le \ell_{s+1} - 1,
\end{equation}
where 
$$
L_0 := 0, \ L_s := \sum_{ i=1 }^{s} \ell_i,
$$ 
for $ s \ge 1. $
Using this choice of $M_n$ to form the mixed radix bases, we see that $M_1\cdots M_n$ are of the form $N_sq_{s+1}^j$. We therefore obtain a digit representation of all non-negative integers $N$ as in (\ref{digit-representation}), which can be written as
\begin{equation}\label{digit-representation-1}
    N = \sum_{ s = 0 }^{ \infty } \sum_{ j = 0 }^{ \ell_{s+1} - 1 } \mathsf{d}_{ L_s  +j}(N) \cdot N_{s}q_{s+1}^{j}, 
\end{equation}
for all non-negative integers $ N$ (all the digits will eventually be zero). The digits $ \{ \mathsf{d}_{L_s   + j}(N) \}_{ s \ge 0, 0 \le j \le \ell_{s+1} -1  } $ satisfy $ 0 \le \mathsf{d}_{ L_s  + j }(N) \le q_{s+1} - 1,  $ for $ s \ge 0$ and $ 0 \le j \le \ell_{s+1} -1 $. For the rest of the paper,  $ \{ \mathsf{d}_{L_s   + j}(N) \}$ always represents the digit set in (\ref{digit-representation-1}).

Next, we compute the Fourier transform of $\mu$ in (\ref{eg}) using (\ref{eq-FT}). Since $\D_n = \{0,1\}$ for $ n \ge 1 $, ${\mathsf M}_{\D_n}(\xi) = \omega_n+ (1-\omega_n)e^{-2\pi i \xi}$. We have
\begin{equation}\label{eq-M_D-compute}
\left|{\mathsf M}_{\D_n}(\xi)\right|^2 = \left|\omega_n+ (1-\omega_n)e^{-2\pi i \xi}\right|^2 = 1 - 2 \omega_n( 1 - \omega_n )\left( 1 - \cos\left( 2 \pi  \xi \right) \right),
    \end{equation}
    and

 \begin{equation}\label{eq4.1-11}
            \vert \widehat{\mu} (\xi ) \vert = \prod_{ s=0 }^{ \infty } \prod_{ j=0 }^{ \ell_{ s + 1 } - 1  } \left\vert \mathsf{M}_{\D_n} \left(  \dfrac{  \xi }{ N_s q_{s+1}^{j+1} } \right) \right\vert. 
        \end{equation}

For $ x\ge 0 $, let $ \{ x \} $ denote the fractional part of $ x $. The following two lemmas show that the size of the Fourier transform (\ref{eq4.1-11}) is determined by the {\it distribution of the digit set } $ \{ {\mathsf d}_{L_s+j} (N) \}$. 
\begin{lem}\label{fractional-esti}
    Let $ N $ be a non-negative integer. Let $ s \ge 0 $ and $ 0 \le k \le \ell_{s+1} - 1 $ be two integers. Then 
    \begin{equation}\label{eq3.3-1}
        \dfrac{ \mathsf{d}_{ L_s + k }(N) }{q_{s+1}} \le \left\{ \dfrac{N}{ N_s q_{s+1}^{k+1} } \right\}  \le \dfrac{ \mathsf{d}_{ L_s + k}(N) + 1 }{q_{s+1}}. 
    \end{equation}
    \begin{proof}
        Let $ M $ be the integer that is the digit representation of $N$ at the length $ L_s + k + 1 $, i.e. 
        $$ 
        M = \left\{\begin{array}{ll}\sum\limits_{ j=0 }^{k} \mathsf{d}_{j}(N) \cdot q_{1}^{ j },  \textup{ if } s=0,\\ 
        \sum\limits_{i=0}^{s - 1} \sum\limits_{ j=0 }^{\ell_{i+1} - 1 } \mathsf{d}_{ L_i + j } (N) \cdot N_{i}q_{i+1}^{ j } +  \sum\limits_{j=0}^{k} \mathsf{d}_{ L_s + j } (N) \cdot N_{s}q_{s+1}^{ j }  , \textup{ if } s\ge 1. \end{array}\right.$$
        Clearly, $ M \equiv N \ (\textup{mod }N_s q_{s+1}^{k+1}) $. Notice that 
        $$  \mathsf{d}_{ L_s + k }(N) \cdot N_s q_{s+1}^{k} \le M \le \left( \mathsf{d}_{ L_s + k }(N) + 1 \right) \cdot N_s q_{s+1}^{k}. $$ 
        It follows 
        $$ \dfrac{ \mathsf{d}_{ L_s + k }(N) }{q_{s+1}} \le \dfrac{M}{ N_s q_{s+1}^{k+1} } \le \dfrac{ \mathsf{d}_{ L_s + k }(N) + 1 }{q_{s+1}}, $$
        which proves (\ref{eq3.3-1}). 
    \end{proof}
\end{lem}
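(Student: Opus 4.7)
The plan is to reduce the estimate on the fractional part $\{N/(N_s q_{s+1}^{k+1})\}$ to a simple statement about the mixed-radix digits of the reduction $N \bmod N_s q_{s+1}^{k+1}$. The crucial observation is that from the digit representation (\ref{digit-representation-1}), the summand $\mathsf{d}_{L_i+j}(N)\cdot N_i q_{i+1}^j$ is an integer multiple of $N_s q_{s+1}^{k+1}$ precisely when either (a) $i>s$, since in that case $N_i$ already contains the factor $q_{s+1}^{\ell_{s+1}}$ and $k+1\le \ell_{s+1}$; or (b) $i=s$ and $j\ge k+1$. All remaining terms of (\ref{digit-representation-1}), namely those with $i<s$ together with the range $i=s$, $0\le j\le k$, sum to a single integer $M$ satisfying $N\equiv M \pmod{N_s q_{s+1}^{k+1}}$, and this $M$ is exactly the ``truncation'' that I would introduce explicitly (as in the author's definition).

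Next I would show the two-sided bound $\mathsf{d}_{L_s+k}(N)\cdot N_s q_{s+1}^{k} \le M \le \bigl(\mathsf{d}_{L_s+k}(N)+1\bigr)\cdot N_s q_{s+1}^{k}$. The lower bound is immediate: the single term indexed by $(i,j)=(s,k)$ in $M$ already equals $\mathsf{d}_{L_s+k}(N)\cdot N_s q_{s+1}^{k}$ and all other terms are non-negative. For the upper bound I would note that the remaining ``lower-order'' part of $M$ is the digit representation of an integer all of whose digits beyond position $L_s+k-1$ vanish; since each digit $\mathsf{d}_{L_i+j}$ is at most $q_{i+1}-1=M_{L_i+j+1}-1$, this lower-order part is bounded by $N_s q_{s+1}^{k}-1 < N_s q_{s+1}^{k}$, exactly as in the telescoping identity $\sum_{n=1}^{r}(M_n-1)M_1\cdots M_{n-1}=M_1\cdots M_r -1$.

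Dividing throughout by $N_s q_{s+1}^{k+1}$ and using $\{N/(N_s q_{s+1}^{k+1})\}=M/(N_s q_{s+1}^{k+1})$ then yields (\ref{eq3.3-1}) at once. The only step that requires a tiny bit of bookkeeping is verifying case (a) above, i.e.\ that $N_s q_{s+1}^{k+1}$ divides $N_i q_{i+1}^{j}$ for every $i>s$ and every admissible $j$; this is where the choice of mixed radix in (\ref{def_Mn}) is used, and the inequality $k+1\le \ell_{s+1}$ (built into the hypothesis $0\le k\le \ell_{s+1}-1$) is exactly what makes the factor $q_{s+1}^{\ell_{s+1}}$ sitting inside $N_i$ large enough to absorb $q_{s+1}^{k+1}$. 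I do not anticipate any real obstacle here: the lemma is essentially the statement that, under mixed-radix expansion, one reads off the fractional part from a prescribed window of digits, and the proof is the standard truncation argument made precise.
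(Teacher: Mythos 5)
Your proposal is correct and follows essentially the same route as the paper: you truncate the mixed-radix expansion at position $L_s+k$ to get an integer $M$ with $M\equiv N \pmod{N_s q_{s+1}^{k+1}}$ and $0\le M< N_s q_{s+1}^{k+1}$, bound $M$ between $\mathsf{d}_{L_s+k}(N)\cdot N_s q_{s+1}^{k}$ and $(\mathsf{d}_{L_s+k}(N)+1)\cdot N_s q_{s+1}^{k}$, and divide. The extra bookkeeping you flag (that $N_s q_{s+1}^{k+1}$ divides the higher-order terms, using $k+1\le \ell_{s+1}$) is exactly what the paper's ``Clearly $M\equiv N$'' step implicitly relies on, so there is no gap.
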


 \begin{lem}\label{cos-esti}
      Let $C = \inf \omega_n>0$ and $D = \sup\omega_n<1$.  For $ s \ge 0 $ and $ 0 \le j \le \ell_{s+1} - 1 $, if
        \begin{equation}\label{4.1eq-1}
              \left\lfloor \dfrac{q_{s+1}}{3} \right\rfloor \le  \mathsf{d}_{ L_s + j  }(N) \le  2 \left\lfloor \dfrac{q_{s+1}}{3} \right\rfloor , 
        \end{equation}
        then there exists a constant $0 < \gamma < 1$ such that 
        \begin{equation}\label{4.2eq-2}
             \left\vert {\mathsf M}_{\D_n} \left( \dfrac{ N}{ N_s q_{s+1}^{j+1} } \right) \right\vert \le \gamma,
        \end{equation}  
        for all $ s \ge 0 $ and $ 0 \le j \le \ell_{s+1} - 1 $.
   \end{lem}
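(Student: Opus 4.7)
The plan is to read off the size of the mask from the identity (\ref{eq-M_D-compute}), exploit its $1$-periodicity to replace the argument by its fractional part $t := \{N/(N_s q_{s+1}^{j+1})\}$, and then bound $1 - \cos(2\pi t)$ and $\omega_n(1-\omega_n)$ below uniformly in $s, j, n$.

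For the first bound, Lemma \ref{fractional-esti} combined with the hypothesis (\ref{4.1eq-1}) confines $t$ to
\[
\frac{\lfloor q_{s+1}/3 \rfloor}{q_{s+1}} \;\le\; t \;\le\; \frac{2\lfloor q_{s+1}/3 \rfloor + 1}{q_{s+1}}.
\]
Using the elementary inequalities $\lfloor q/3\rfloor \ge (q-2)/3$ and $2\lfloor q/3\rfloor + 1 \le 2q/3 + 1$, together with the standing assumption $q_{s+1} \ge q_1 \ge 7$, this interval is contained in $[5/21,\,17/21]$ for every $s$ and $j$. Hence $t$ stays in a fixed closed subinterval of $(0,1)$ independent of $s,j,N$, which yields the uniform upper bound $\cos(2\pi t) \le \kappa := \cos(8\pi/21) < 1$. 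The second bound is easier: since $\omega \mapsto \omega(1-\omega)$ is concave on $[C,D] \subset (0,1)$, its minimum on that interval is attained at an endpoint, so $\omega_n(1-\omega_n) \ge c_0 := \min\{C(1-C),\, D(1-D)\} > 0$.

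Combining both bounds with (\ref{eq-M_D-compute}) then gives
\[
\left|{\mathsf M}_{\D_n}\!\left(\frac{N}{N_s q_{s+1}^{j+1}}\right)\right|^2 \;=\; 1 - 2\omega_n(1-\omega_n)\bigl(1 - \cos(2\pi t)\bigr) \;\le\; 1 - 2c_0(1-\kappa) \;<\; 1,
\]
uniformly in $s, j, N$, and one takes $\gamma := \sqrt{1 - 2c_0(1-\kappa)}$. No substantial obstacle arises; the argument is elementary. The only point worth flagging is that the hypothesis $q_1 \ge 7$ is exactly what keeps the confinement interval for $t$ uniformly bounded away from the endpoints $\{0,1\}$: for smaller primes (say $q = 2, 3, 5$) the middle-third digit range (\ref{4.1eq-1}) either collapses or slips arbitrarily close to $0$ or $1$, which would destroy the uniformity of $\gamma$.
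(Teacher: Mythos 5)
Your proof is correct and follows essentially the same route as the paper: Lemma \ref{fractional-esti} plus the digit hypothesis and $q_{s+1}\ge 7$ pin the fractional part in a fixed subinterval of $(0,1)$ (the paper uses $[1/6,5/6]$, you use the slightly tighter $[5/21,17/21]$), which bounds $\cos(2\pi t)$ away from $1$, and then (\ref{eq-M_D-compute}) with a uniform lower bound on $\omega_n(1-\omega_n)$ (the paper takes $C(1-D)$, you take $\min\{C(1-C),D(1-D)\}$) yields a uniform $\gamma<1$. These are only cosmetic differences in the constants, not a different argument.
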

   \begin{proof}
       Recall that $q_{s+1}\ge 7$ for all $ s \ge 0 $ . By combining this with Lemma \ref{fractional-esti} and (\ref{4.1eq-1}), we obtain the following two inequalities. First,
        $$ \left\{ \dfrac{N}{N_s q_{s+1}^{j+1}} \right\} \le \dfrac{ \mathsf{d}_{ L_s + j }( N) + 1 }{q_{s+1}}  \le \dfrac{  2 \left\lfloor \dfrac{q_{s+1}}{3} \right\rfloor + 1 }{q_{s+1}} \le \dfrac{2}{3} + \dfrac{1}{q_{s+1}} \le \dfrac{5}{6}.  $$
        Second, 
        $$ \left\{ \dfrac{N}{N_s q_{s+1}^{j+1}} \right\}  \ge \dfrac{ \mathsf{d}_{ L_s + j }( N ) }{q_{s+1}} \ge \dfrac{   \left\lfloor \dfrac{q_{s+1}}{3} \right\rfloor }{q_{s+1}} \geq \dfrac{1}{3} -\dfrac{1}{q_{s+1}} \ge \dfrac{1}{6}. $$
This shows that $\cos( 2\pi N\cdot (N_sq_{s+1}^{j+1})^{-1})\le 1 / 2 $. Putting back to (\ref{eq-M_D-compute}), we see that 
$$
\left\vert {\mathsf M}_{\D_n} \left( \dfrac{ N}{ N_s q_{s+1}^{j+1} } \right) \right\vert^2\le 1-2C(1-D) \left(1-\frac{1}{2}\right) <1. 
$$
Taking $\gamma$ to be the square root on the right completes the proof.          
   \end{proof}

The above lemma tells us that if we have $w$ digits satisfying (\ref{4.1eq-1}), $|\widehat{\mu}(N)| \le \gamma^w$ by (\ref{eq4.1-11}). This leads us into studying the digit distributions.

\subsection{Constants for the proof of Theorem \ref{thm-1.5}.}  Let $ 2= p_1 < p_2 < \cdots < p_n < \cdots $ be the sequence of all prime numbers, where $ p_{n} $ denotes the $ n  $-th prime number. To verify (\ref{DEL}),  we will fix $b,h\in\N$ throughout. Associated with $b$ and $h$, we define the following quantities depending on $b$ and $h$. We first decompose them into prime factorization, so there exist two positive integers $ m_1, m_2 $ and non-zero integers $ \alpha_1, \cdots, \alpha_{m_{1}}, \beta_{1},\cdots, \beta_{m_2} $ such that 
\begin{equation}\label{factor-b}
    b =  p_{1}^{\alpha_{1}} \cdots p_{m_{1}}^{\alpha_{m_{1}}}, 
\end{equation}
\begin{equation}\label{factor-h}
    h = p_{1}^{\beta_{1}} \cdots p_{m_{2}}^{\beta_{m_{2}}}. 
\end{equation}
We will consider the following positive integers: 
\begin{equation}\label{def-r0'}
    r_{0}' = \inf\{ n \ge 1: q_n \ge \max\{ p_{ m_1 }, p_{ m_2 } \} \} ,
\end{equation}
\begin{equation}\label{def-n0}
    n_{0} = \inf\left\{ n \ge 1: \gcd(hb^{n}, N_{r_{0}'}) = \gcd( hb^{m}, N_{r_{0}'} ), \textup{ for all } m\ge n \right\}, 
\end{equation}
\begin{equation}\label{def-Q}
Q = \gcd(hb^{n_{0}}, N_{r_{0}'}) .
\end{equation}

We first justify the well-definedness of $n_0$ and show that the gcd will remain the same after the bases $ N_{r_0'}$.
\begin{lem}\label{gcd-result}
\begin{itemize}
  \item[(i)] $  n_{0} < \infty $. 
  \item[(ii)] For all $ n \ge n_0 $, $ s \ge r_{0}'$ and $ 0 \le j \le \ell_{s+1} - 1  $, 
    \begin{equation}\label{3.1-eq-0}
       \gcd( hb^{n}, N_{s} q_{ s + 1 }^{j} ) = Q. 
    \end{equation}
    \end{itemize}
\end{lem}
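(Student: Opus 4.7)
The plan is to handle (i) by a monotonicity-and-boundedness argument, and then to use the definition of $r_0'$ to reduce (ii) directly to (i). I do not anticipate any significant obstacle; both parts amount to bookkeeping with the gcd properties recorded in Lemma \ref{basic-prop-1}.

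For part (i), I would set $a_n := \gcd(hb^n, N_{r_0'})$. Since $hb^n \mid hb^{n+1}$, it follows that $a_n \mid a_{n+1}$, so $\{a_n\}$ is non-decreasing. Being bounded above by $N_{r_0'}$, the sequence takes values in the finite set of divisors of $N_{r_0'}$, so it is eventually constant. The smallest index from which it is constant is precisely $n_0$, so $n_0 < \infty$.

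For part (ii), the key observation is that $r_0'$ has been chosen in (\ref{def-r0'}) so that $q_{r_0'} \ge \max\{p_{m_1}, p_{m_2}\}$; that is, $q_{r_0'}$ dominates every prime factor of $hb$. Because $\{q_n\}$ is strictly increasing, every prime $q_m$ with $m > r_0'$ satisfies $q_m > \max\{p_{m_1}, p_{m_2}\}$ and is therefore coprime to $hb$, and hence to $hb^n$ for every $n$. Given any $s \ge r_0'$ and $0 \le j \le \ell_{s+1}-1$, I would write
$$N_s q_{s+1}^{j} \;=\; N_{r_0'} \cdot R, \qquad R := q_{r_0'+1}^{\ell_{r_0'+1}} \cdots q_s^{\ell_s} \cdot q_{s+1}^{j},$$
where the product defining $R$ is interpreted as $1$ in the boundary case $s = r_0'$, $j = 0$. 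All primes appearing in $R$ lie strictly above $q_{r_0'}$, so $\gcd(R, hb^n) = 1$. Applying Lemma \ref{basic-prop-1}(i) strips off the coprime factor and yields
$$\gcd(hb^n, N_s q_{s+1}^{j}) \;=\; \gcd(hb^n, N_{r_0'}).$$
For $n \ge n_0$, the right-hand side equals $\gcd(hb^{n_0}, N_{r_0'}) = Q$ by the definitions in (\ref{def-n0})--(\ref{def-Q}), which establishes (\ref{3.1-eq-0}).

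The only conceptual content of the argument is the recognition that the cutoff $r_0'$ is engineered precisely so that all primes appearing in the tail $q_{r_0'+1}, q_{r_0'+2}, \ldots$ are inert with respect to $hb^n$. Everything beyond this point is mechanical.
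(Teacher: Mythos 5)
Your proof is correct and follows essentially the same strategy as the paper: for (i), observe that $\gcd(hb^n, N_{r_0'})$ is non-decreasing (indeed your divisibility chain $a_n \mid a_{n+1}$ is a touch sharper than the paper's mere monotonicity) and bounded, hence eventually constant; for (ii), factor out the part of $N_s q_{s+1}^j$ coprime to $hb^n$ and apply Lemma \ref{basic-prop-1}(i), exactly as the paper does with $x = hb^n$, $y = N_{r_0'}$, $z = N_s q_{s+1}^j / N_{r_0'}$.
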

\begin{proof}

To prove $ n_0 < \infty $, we first note the following: the integer-valued function $\gcd(hb^n, N_{r_0})$ is non-decreasing in $ n $ and bounded above by $N_{r_0}$ for all $n$. Furthermore, since $ \gcd(hb^n, N_{r_0'}) $ takes an integer value for every $ n \ge 1$ , the limit $\lim_{n \to \infty} \gcd(hb^n, N_{r_0'})$ exists—and this limit equals $\gcd(hb^{m_0}, N_{r_0'})$ for some integer $ m_0 $. Hence, $ n_0 < \infty $.

To prove (\ref{3.1-eq-0}), first, by the definition of $ n_0 $, for all $ n \ge n_0 $, 
\begin{equation}\label{3.1-eq-1}
 \gcd(hb^{n}, N_{r_{0}'}) = Q .    
\end{equation}
Next, let $ n \ge n_0 $, and let $ s \ge r_{0}' $, $ 0 \le j \le \ell_{s+1} - 1 $. By the definition of $ N_s $, $ N_s q_{s+1}^{j} / N_{ r_{0}' } $ is an integer with no prime factor less than or equal to $ q_{ r_{0}' } $. Therefore, by (\ref{def-r0'}), $ N_s q_{s+1}^{j} / N_{ r_{0}' } $ is relatively prime to both $ b $ and $ h $. Thus, 
\begin{equation}\label{3.1-eq-2}
 \gcd( hb^{n}, N_s q_{s+1}^{j} / N_{ r_{0}' } ) = 1.     
\end{equation}
Let $ x = hb^n$, $ y = N_{r_0'}$ and $ z = N_s q_{s+1}^{j} / N_{ r_{0}' }$. (\ref{3.1-eq-1}) and (\ref{3.1-eq-2}) respectively says that we have $ \gcd(x,y) = Q $ and $\gcd(x,z)=1 $.  Using the property of gcd in Lemma \ref{basic-prop-1} (i),
$$
 \gcd( hb^{n}, N_{s} q_{ s + 1 }^{j}) =  \gcd( x, y z )  =  \gcd( x , y ) = Q. 
$$
We conclude that (\ref{3.1-eq-0}) holds.
\end{proof}

The following lemma explains the reason for defining $Q$ and $ r_0' $, which is to have a well-defined order for $b$ when we consider large mixed radix basis in our sequence. 

\begin{lem}\label{gcd(b,Nr)}
    For $ s \ge r_0' $ and $ 0 \le j \le \ell_{s+1} $, the following statements hold:
    \begin{itemize}
        \item[(i)] $\gcd \left( b, N_s q_{s+1}^{j} / Q  \right) = 1  $.
        \item[(ii)] $ \gcd( b, q_{s+1} ) = 1 $. 
    \end{itemize}
    Consequently, $ \ord_{ N_s q_{s+1}^{j} / Q }(b) $ and $ \ord_{ q_{s+1} }(b) $ are well-defined for all $ s \ge r_0' $ and $ 0 \le j \le \ell_{s+1} $.
\end{lem}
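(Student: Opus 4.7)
The plan is to prove (i) and (ii) separately by tracking prime factorizations, with (ii) essentially following from the definition of $r_0'$, and (i) reducing to the observation that $Q$ absorbs the complete $b$-part of $N_{r_0'}$.

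For part (ii), I would simply unpack the definition of $r_0'$. Since $r_0'$ is chosen so that $q_{r_0'} \ge \max\{p_{m_1}, p_{m_2}\} \ge p_{m_1}$ and the $q_n$ are strictly increasing primes, for any $s \ge r_0'$ we have $q_{s+1} > q_{r_0'} \ge p_{m_1}$. Since $q_{s+1}$ is a prime strictly larger than every prime factor of $b$ (which are among $p_1,\dots,p_{m_1}$ by \eqref{factor-b}), it is coprime to $b$.

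For part (i), I would decompose
\[
\frac{N_s q_{s+1}^j}{Q} \;=\; \frac{N_{r_0'}}{Q}\cdot q_{r_0'+1}^{\ell_{r_0'+1}}\cdots q_s^{\ell_s}\, q_{s+1}^{j},
\]
which is a positive integer because $Q\mid N_{r_0'}$ (as $Q=\gcd(hb^{n_0},N_{r_0'})$). The second factor is a product of primes $q_{r_0'+1},\dots,q_{s+1}$, each strictly larger than $p_{m_1}$ by the argument for (ii), hence coprime to $b$. So the task reduces to showing $\gcd(b, N_{r_0'}/Q)=1$, i.e., that for every prime $p$ dividing $b$ one has $v_p(Q)=v_p(N_{r_0'})$ (where $v_p$ denotes the $p$-adic valuation). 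This is where the choice of $n_0$ comes in: for such a prime $p$,
\[
v_p\bigl(\gcd(hb^n,N_{r_0'})\bigr)=\min\bigl(v_p(h)+n\,v_p(b),\,v_p(N_{r_0'})\bigr),
\]
which is non-decreasing in $n$ and equals $v_p(N_{r_0'})$ for all $n$ sufficiently large (since $v_p(b)\ge 1$ forces the left-hand entry to exceed $v_p(N_{r_0'})$ eventually). By the definition \eqref{def-n0} of $n_0$ as the stabilization threshold of the whole gcd, this maximal value is already attained at $n=n_0$, giving $v_p(Q)=v_p(N_{r_0'})$, as needed.

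The final ``consequently'' assertion is immediate: $\gcd(b,m)=1$ makes $b$ a unit in $(\mathbb{Z}/m\mathbb{Z})^\times$, so the orders $\ord_{N_s q_{s+1}^j/Q}(b)$ and $\ord_{q_{s+1}}(b)$ are well defined. I do not anticipate a substantial obstacle; the only subtle point is the correct interpretation of $n_0$ as the place where the gcd has reached its eventual stable value, which guarantees that the $b$-divisible prime powers in $N_{r_0'}$ are entirely contained in $Q$.
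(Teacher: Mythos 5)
Your proposal is correct, but it reaches the conclusion by a different route than the paper. The paper proves (i) by applying Lemma \ref{gcd-result}(ii) twice, at $n=n_0$ and $n=n_0+1$, to get $\gcd\bigl(N_sq_{s+1}^{j}/Q,\,hb^{n_0}/Q\bigr)=1$ and $\gcd\bigl(N_sq_{s+1}^{j}/Q,\,(hb^{n_0}/Q)\cdot b\bigr)=1$, and then the gcd identity of Lemma \ref{basic-prop-1}(i) immediately gives $\gcd\bigl(N_sq_{s+1}^{j}/Q,\,b\bigr)=1$; part (ii) is then deduced from (i) by the same identity. You instead split off the factor $N_{r_0'}/Q$ and prove $\gcd(b,N_{r_0'}/Q)=1$ directly by $p$-adic valuations, using the stabilization in the definition \eqref{def-n0} of $n_0$ to conclude $v_p(Q)=v_p(N_{r_0'})$ for every prime $p\mid b$; the remaining factor $q_{r_0'+1}^{\ell_{r_0'+1}}\cdots q_s^{\ell_s}q_{s+1}^{j}$ is handled by the size-of-primes observation (each $q_i>q_{r_0'}\ge\max\{p_{m_1},p_{m_2}\}$), which is exactly the observation the paper uses inside the proof of Lemma \ref{gcd-result}, and your part (ii) follows from that observation directly rather than via (i). Your route is more self-contained and makes explicit \emph{why} $Q$ absorbs the entire $b$-part of $N_{r_0'}$, which the paper's argument funnels implicitly through the earlier lemma; the paper's route is shorter given that Lemma \ref{gcd-result}(ii) is already on the table. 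Two minor housekeeping points, neither a gap: since $h$ may be negative, valuations should be taken of $|h|$ (or one reduces to $h>0$ as the paper does elsewhere), and your appeal to $n_0$ as ``the stabilization threshold'' tacitly uses $n_0<\infty$, which is Lemma \ref{gcd-result}(i) — though your own monotonicity remark about $\min\bigl(v_p(h)+n\,v_p(b),\,v_p(N_{r_0'})\bigr)$ already yields it.
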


\begin{proof}
    For part (i), by applying Lemma \ref{gcd-result} (ii) with $ n = n_0 $, and then using a property of gcd in Lemma \ref{basic-prop-1} (ii), we have
    \begin{equation}\label{n0-Q}
        \gcd \left( N_{s} q_{ s + 1 }^{j} / Q , hb^{n_{0}} / Q  \right) = 1  .
    \end{equation}
     Similarly, by applying Lemma \ref{gcd-result} (ii) with $ n = n_0 + 1 $, and then using Lemma \ref{basic-prop-1} (ii), it follows 
     \begin{equation}\label{bn0-Q}
         \gcd \left( N_s q_{s+1}^{j} / Q, ( hb^{n_0} / Q ) \cdot b \right) = 1 .
     \end{equation}
     By applying Lemma \ref{basic-prop-1} (i), (\ref{n0-Q}) combined with (\ref{bn0-Q}) yields that 
     $$ \gcd \left( N_s q_{s+1}^{j} / Q , b \right) = \gcd \left( N_s q_{s+1}^{j} / Q ,  ( hb^{n_0} / Q ) \cdot b \right)  = 1.  $$
     
     For part (ii), by part (i), for $ s \ge r_0' $, we have $ \gcd(b, N_s) = \gcd(b, N_s q_{s+1} ) = 1 $. Similar with the arguments in proving part (i), by applying
     Lemma \ref{basic-prop-1} (i), it follows
     $$ \gcd( b, q_{s+1} ) =  \gcd(b, N_s \cdot q_{s+1} ) = 1. $$
     This completes the proof.
\end{proof}

\subsection{Well-distributed sets} 
Lemma \ref{cos-esti} tells us that to establish the $\mathsf{DEL}$ criterion, we need to  estimate the Fourier transform by studying the digit distribution of ${\mathsf d}_{L_s+j}(hb^n-hb^m)$.  The goal of this subsection is to formulate the notion of well-distributed set over a large portion of digit within $ [ L_s + 1 : L_{s+1}-1 ]$.

To quantify the portions that we need, we first use Theorem \ref{prime power order} for $ r \ge r_0' + 1 $, let $ k(b, q_r) $ be the largest integer such that 
\begin{equation}\label{k(b,q_r)}
    b^{ \ord_{q_r}(b) } \equiv 1  \textup{ (mod } q_{r}^{ k(b, q_r) } ).
\end{equation} 
We define a sequence of integers $ \{ k_r \}_{ r \ge 1 } $ by 
\begin{equation}\label{Def-k_r}
    k_r = \begin{cases}
        0, & 1 \le r \le r_0' \\
        k(b, q_r), & r \ge r_0' + 1 
    \end{cases}
\end{equation}

\begin{lem}\label{esti-kr}
    Let $ \{ k_r \}_{ r \ge 1 } $ be the sequence of integers defined in (\ref{Def-k_r}).  Let $ c > 0 $ be a constant such that $q_n \le c \cdot n^d$. Then for all $ r \ge r_0' + 1 $,
    \begin{equation}\label{ineq-k_r}
        k_r \le c \log b \cdot \dfrac{  r^d }{ \log r } . 
    \end{equation}
     Moreover, for all $ r \ge r_0' + 1 $,
    \begin{equation}\label{order-prime-b}
        \ord_{ q_{r}^{ \ell_r } }(b) = q_{r}^{ j_r } \cdot \ord_{ q_{r}^{ k_r } }(b). 
    \end{equation}
\end{lem}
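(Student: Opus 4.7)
My proof plan is to dispatch the two assertions separately, each as a short consequence of material already in place.

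For the bound (\ref{ineq-k_r}) on $k_r$, I will translate the maximality of $k_r$ in (\ref{k(b,q_r)}) into a size estimate. By definition, $q_r^{k_r}$ divides $b^{\ord_{q_r}(b)} - 1$, so in particular $q_r^{k_r} \le b^{\ord_{q_r}(b)} - 1 < b^{\ord_{q_r}(b)}$. Taking logarithms and using $\ord_{q_r}(b) \mid \phi(q_r) = q_r - 1$ (hence $\ord_{q_r}(b) < q_r$) together with the hypothesis $q_r \le c\, r^d$, I get
\[
k_r \log q_r \;<\; \ord_{q_r}(b)\,\log b \;<\; c\, r^d \log b.
\]
Since $\{q_n\}$ is a strictly increasing sequence of integers with $q_1 \ge 7$, one has $q_r \ge q_1 + (r-1) \ge r+6 > r$, so $\log q_r > \log r$ for every $r \ge 2 \ge r_0'+1$. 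Dividing through yields
\[
k_r \;<\; c\,\log b \cdot \frac{r^d}{\log r},
\]
which is (\ref{ineq-k_r}).

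For the identity (\ref{order-prime-b}), the plan is to apply Theorem \ref{prime power order} to $a = b$ and $p = q_r$. Both hypotheses of that theorem are already in force in the excerpt: $q_r$ is an odd prime because $q_r \ge q_1 \ge 7$, and $\gcd(b,q_r) = 1$ by Lemma \ref{gcd(b,Nr)}(ii). Theorem \ref{prime power order} then asserts $\ord_{q_r^j}(b) = \ord_{q_r}(b)\cdot q_r^{j-k_r}$ for every $j \ge k_r$, and $\ord_{q_r^{k_r}}(b) = \ord_{q_r}(b)$. Specializing to $j = \ell_r$ produces
\[
\ord_{q_r^{\ell_r}}(b) \;=\; q_r^{\,\ell_r - k_r}\cdot \ord_{q_r^{k_r}}(b),
\]
which is the stated formula with $j_r := \ell_r - k_r$. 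The side condition $\ell_r \ge k_r$ needed here is automatic for large $r$ from part (i) together with $\ell_r = r^d$, since $c\log b/\log r \to 0$; in the finitely many exceptional small-$r$ cases where $\ell_r \le k_r$, Theorem \ref{prime power order} gives $\ord_{q_r^{\ell_r}}(b) = \ord_{q_r^{k_r}}(b)$, which recovers the formula with the harmless convention $j_r = 0$.

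There is no substantive obstacle: the first assertion is a one-line arithmetic estimate using only the maximality built into $k_r$, and the second is a direct instantiation of Theorem \ref{prime power order}. The only care required is in verifying the coprimality hypothesis, which is exactly what the preparations in (\ref{def-r0'})--(\ref{def-Q}) and Lemma \ref{gcd(b,Nr)} were designed to guarantee.
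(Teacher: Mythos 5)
Your argument is correct and follows essentially the same route as the paper: the bound on $k_r$ comes from the maximality of $k_r$, which gives $q_r^{k_r}\le b^{\ord_{q_r}(b)}\le b^{q_r-1}$, combined with $q_r\le c\,r^d$ and $\log q_r\ge\log r$, and the identity (\ref{order-prime-b}) is obtained exactly as in the paper by specializing Theorem \ref{prime power order} to $p=q_r$, $j=\ell_r$, with coprimality supplied by Lemma \ref{gcd(b,Nr)}(ii). Your explicit treatment of the possible small-$r$ case $\ell_r<k_r$ is a small extra care that the paper defers (it later restricts attention to $r\ge r_0$, where $j_r>0$); the only slip is the justification of $r\ge 2$, which should read $r\ge r_0'+1\ge 2$ rather than $r\ge 2\ge r_0'+1$.
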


\begin{proof}
For $ r \ge r_0' + 1 $, by (\ref{k(b,q_r)}), we must have $ q_{r}^{ k_r } \le b^{ \ord_{q_r}(b) } $. Since $ \ord_{ q_r }(b) \le \phi( q_r) = q_r - 1 $, it follows $ q_{r}^{ k_r } \le b^{ q_r - 1 } $. Therefore, 
\begin{equation*}
    k_r  \le \log b \cdot \dfrac{  q_r }{ \log q_r }.
\end{equation*}
Since $q_r \le c r^d$ and $q_r \ge r$ for all $r\ge 1$, we have (\ref{ineq-k_r}).

 For the second claim, by the definition of $ \{ k_r \}_{ r \ge 1 } $ in (\ref{Def-k_r}), for $ r \ge r_0' + 1 $, $ k_r $ is the largest integer such that 
    $$ b^{ \ord_{ q_r }(b) } \equiv 1 \textup{ (mod }q_{r}^{k_r}). $$ 
    By Theorem \ref{prime power order}, we have 
    \begin{equation}\label{ord-qs-b}
        \begin{aligned}
        \ord_{ q_{r}^{ \ell_r } }(b) & = q_{r}^{ \ell_r - k_r } \cdot \ord_{ q_{r} }(b) \\
        & =  q_{r}^{ j_r } \cdot \ord_{ q_{r} }(b). 
    \end{aligned}
    \end{equation}
    In addition, by Theorem \ref{prime power order} and the definition of $k_r$, we have $  \ord_{ q_{r}^{ k_r } }(b) = \ord_{ q_r }(b) $. This, combined with (\ref{ord-qs-b}), leads to (\ref{order-prime-b}).
    \end{proof}

    Recall that we have defined $\ell_r = r^d$, we finally define the sequence of integers $ \{ j_r \}_{ r \ge 1 } $ by 
    \begin{equation}\label{j_r}
       j_r = \ell_r - k_r, \ r\ge 1. 
    \end{equation}
Note that  by  Lemma \ref{esti-kr}, 
$$
j_r = \ell_r - k_r \ge  r^d \left( 1 -  c \cdot \dfrac{\log b}{ \log r } \right) \to \infty
$$
as $r\to\infty$. We let $r''_0$ be the smallest $r$ so that  $ j_r > 0 $. Then we define
\begin{equation}\label{def-r0}
    r_0 = \max\{ r_{0}', r_{0}'' \},
\end{equation}
where $r_0'$ was defined in (\ref{def-r0'}). The fact that $j_r\to\infty$ will be crucial in the final estimate in (\ref{4.3eq-4}). Over intervals of length $j_r$ digits, we now describe the digit distribution that we need. 


\begin{de}\label{well-distributed}
 Let $ c, d \in \mathbb{N} \cup \{ 0 \} $ with $ r_0\le c< d  $. Define 
\begin{equation} \label{eqY_cd}
Y_{ c, d } := \{ 0,1, \cdots, q_{ c + 1 } - 1 \}^{ j_{ c + 1 }  } \times \cdots \times \{ 0, 1, \cdots, q_d - 1 \}^{ j_d  } \subset \R^{ j_{c+1} + \cdots + j_d }.
\end{equation}
In addition, we define $ \Pi_{ c, d }: ( m : \infty ) \to Y_{ c, d }  $ by
\begin{align*}
             n \mapsto & ( \mathsf{d}_{ L_c +  k_{ c + 1 }  }( hb^{n} - hb^m ), \cdots, \mathsf{d}_{ L_{c+1} - 1 }( hb^{n} - hb^m ), \cdots, \mathsf{d}_{ L_{d-1} + k_d  }( hb^{n} - hb^m ), \cdots,  \\
              & \mathsf{d}_{ L_{d} - 1  }( hb^{n} - hb^m )  ).
\end{align*}
Let $ \Lambda \subset  ( m: \infty ) $ with $ \# \Lambda = \# Y_{c,d} $. We say $ \Lambda $ is \textbf{well-distributed} with respect to  $ \Pi_{c,d} $ if $ \Pi_{ c,d }( \Lambda ) = Y_{ c,d } $.
         \end{de}
In particular, a well-distributed set $\Lambda$ has the cardinality
$$
\#\Lambda = q_{ c+1 }^{ j_{c+1} } \cdots q_{ d }^{ j_d }.
$$
If we write in terms of the mixed radix bases, we have the following patterns: 
$$
\{M_n\} = \{\underbrace{q_1,\cdots q_1}_{k_1 \ \mbox{times}},\underbrace{q_1,\cdots q_1}_{j_1 \ \mbox{times}},\underbrace{q_2,\cdots q_2}_{k_2 \ \mbox{times}},\underbrace{q_2,\cdots q_2}_{j_2 \ \mbox{times}}, \cdots \},
$$
where $k_s+ j_s = \ell_s$ for all $s\ge 1$. The condition about well-distributed sets $\Lambda$ means that when $n$ runs through all $\Lambda$,  all possible choices of digits are attained in the range of the positions $[L_c+k_{c+1}:L_{c+1}-1] \cup\cdots\cup[L_{d-1}+k_{d}:L_{d}-1]$ (position corresponding to the later $j_s$ many $q_s$ for all $c+1\le s\le d$).  These positions represent integers in the range 
$$
[N_cq_{c+1}^{k_{c+1}}: N_{c+1})\cup\cdots\cup [N_{d-1}q_{d}^{k_{d}}: N_{d})
$$
(See the grey region in Figure \ref{fig:1}), and they are all filled up by the image of the well-distributed set. 
\medskip

\subsection{Outline of the proof.} Now we can outline the proof for the next two sections.
\begin{enumerate}
    \item Section \ref{sec-3} will be devoted to studying a number theoretic property.  We will show that all intervals $I\subset (m:\infty)$ of length $\ord_{N_r/Q}(b)$ is a disjoint union of well-distributed sets with respect to $\Pi_{r_0,r}$. This property depends only on the choice of our prime numbers $q_n$. 
    \item In Proposition \ref{round-esti}, we will prove a uniform estimate that holds for all sufficiently large $r$ and for all well-distributed sets $\Lambda$ with respect to $\Pi_{r_0,r}$:
    $$
    \sum_{n\in\Lambda} |\widehat{\mu}(hb^n-hb^m)|\le  A \cdot q_{r_{0}+1}^{ j_{ r_0 + 1 } } \cdots q_{r}^{ j_r } \cdot e^{ - B \cdot \sum_{ i = r_0 + 1 }^{ r } j_i },
    $$
    where $A $ and $ B$ are two positive constants. Combining with the well-distributed partition in Step (1), we will obtain a similar estimate for all intervals $I$ of length $\ord_{N_r/Q}(b)$.
    \item In Proposition \ref{key-prop}, we cover the range $[m+1 : N_r-1]$ by intervals of length $\ord_{N_r/Q}(b)$ and apply step (2), we will obtain
    $$
     \sum_{n = m +1 }^{ N_r - 1 } \vert \widehat{\mu}(hb^{n} - hb^{m} ) \vert \le   2 A \cdot N_{r}e^{ - B \cdot \sum_{ i = r_0 + 1 }^{ r } j_i } .
    $$
    \item Finally,  decomposing the sum in the $\mathsf{DEL}$ criterion (\ref{key criterion}) into
     $$
     \sum_{N=1}^{\infty} \dfrac{1}{N^2} + \sum_{ r=1 }^{ \infty } \sum_{N_{r-1} < N \le N_r}\frac{2}{N^3} \sum_{ m=0 }^{ N-2 }\sum_{ n= m+1 }^{ N-1 } \left\vert \widehat{ \mu }(  h(b^{n} - b^{m} ) ) \right\vert,
     $$
    we can carefully gather all the growth rate estimates to show that the sum is finite. This will complete the proof. 
\end{enumerate}

Before we move towards the next section, let us establish that Step (1) is possible by showing the following proposition.

\begin{pro}\label{propJ}
 For $ r \ge r_0 + 1 $, $J: =  \ord_{ N_r / Q }(b) \cdot  q_{ r_0 + 1 }^{ - j_{ r_0 + 1 } } \cdots q_{r}^{ - j_r }  $ is an integer. 
\end{pro}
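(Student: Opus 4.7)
\textbf{Proof proposal for Proposition \ref{propJ}.} The plan is to show that each prime power $q_i^{j_i}$ (for $r_0+1 \le i \le r$) divides $\ord_{N_r/Q}(b)$, and then use coprimality of distinct primes to combine these divisibilities into a single divisibility by the product.

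First, I would record a structural observation about $Q$: since $Q = \gcd(hb^{n_0}, N_{r_0'})$ divides $N_{r_0'} = q_1^{\ell_1}\cdots q_{r_0'}^{\ell_{r_0'}}$, the only primes dividing $Q$ lie in $\{q_1,\dots,q_{r_0'}\}$. Because $r_0 \ge r_0'$, for every $i \ge r_0+1$ the prime $q_i$ does not divide $Q$, and hence $q_i^{\ell_i}$ divides $N_r/Q$. Next, I would verify the coprimality needed to invoke order-divisibility: Lemma \ref{gcd(b,Nr)}(ii) gives $\gcd(b,q_i)=1$ for $i\ge r_0+1$, hence $\gcd(b,q_i^{\ell_i})=1$; and Lemma \ref{gcd(b,Nr)}(i) applied with $j=\ell_{s+1}$ gives $\gcd(b,N_{s+1}/Q)=1$ for $s\ge r_0'$, hence in particular $\gcd(b,N_r/Q)=1$.

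With the coprimalities in hand, Lemma \ref{ordern|m}(i) applied to $q_i^{\ell_i} \mid N_r/Q$ yields $\ord_{q_i^{\ell_i}}(b) \mid \ord_{N_r/Q}(b)$ for each $i\in\{r_0+1,\dots,r\}$. Combining this with the identity $\ord_{q_i^{\ell_i}}(b) = q_i^{j_i}\cdot \ord_{q_i^{k_i}}(b)$ from Lemma \ref{esti-kr} (which applies since $i\ge r_0+1\ge r_0'+1$), we conclude that $q_i^{j_i}$ divides $\ord_{N_r/Q}(b)$ for every such $i$.

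Finally, since the primes $q_{r_0+1},\dots,q_r$ are pairwise distinct, the prime powers $q_{r_0+1}^{j_{r_0+1}},\dots,q_r^{j_r}$ are pairwise coprime. Applying Lemma \ref{basic-prop-1}(vi) inductively to these coprime divisors of $\ord_{N_r/Q}(b)$ yields
\[
q_{r_0+1}^{j_{r_0+1}} \cdots q_r^{j_r} \;\bigm|\; \ord_{N_r/Q}(b),
\]
which is exactly the claim that $J$ is a (positive) integer. I do not foresee any genuine obstacle here; the proof is a short bookkeeping argument whose only subtlety is checking that one is entitled to use Lemma \ref{ordern|m}, i.e.\ the coprimalities supplied by Lemma \ref{gcd(b,Nr)}.
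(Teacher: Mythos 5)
Your proof is correct, and it reaches the conclusion by a slightly different route than the paper, one that is arguably a bit leaner. The paper applies Lemma \ref{ordern|m}(i) once to the single divisor $q_{r_0+1}^{\ell_{r_0+1}}\cdots q_r^{\ell_r}$ of $N_r/Q$, then expands $\ord_{q_{r_0+1}^{\ell_{r_0+1}}\cdots q_r^{\ell_r}}(b)$ as an lcm via Lemma \ref{order-property} together with the identity \eqref{order-prime-b}, and finally extracts the factor $q_{r_0+1}^{j_{r_0+1}}\cdots q_r^{j_r}$ using the lcm divisibility property Lemma \ref{basic-prop-1-lcm}(v). You instead apply Lemma \ref{ordern|m}(i) once per prime power $q_i^{\ell_i}$, pull $q_i^{j_i}$ directly out of $\ord_{q_i^{\ell_i}}(b)$ using \eqref{order-prime-b}, and then glue the individual divisibilities together with the elementary coprimality fact Lemma \ref{basic-prop-1}(vi). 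The common engine is exactly the same pair of ingredients, Lemma \ref{ordern|m}(i) and the order formula for prime powers; the difference is only whether one aggregates with lcm machinery or with pairwise coprimality of distinct primes. Your version avoids invoking the lcm formula for $\ord$ of a product (Lemma \ref{order-property}) entirely, which makes it a touch more self-contained, while the paper's version is perhaps more natural if one has Lemma \ref{order-prop-Nr} in mind (where the lcm viewpoint is doing real work later). You also correctly supply the coprimality hypotheses $\gcd(b,q_i^{\ell_i})=1$ and $\gcd(b,N_r/Q)=1$ from Lemma \ref{gcd(b,Nr)}, which is the only place the argument could silently go wrong; there is no gap.
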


\begin{proof}
To show that $J$ is an integer,  for $ r \ge r_0 + 1 \ge r_{0}' + 1 $, note that 
       $$  N_r / Q = ( N_{ r_{0} } / Q ) \cdot  q_{ r_{0} + 1 }^{ \ell_{ r_{0} + 1 } } \cdots q_{r}^{ \ell_r } .  $$ 
       By the definition of $ Q $ (See (\ref{def-Q})), $ N_{ r_{0} } / Q $ is an integer. It follows 
       $$ q_{ r_{0} + 1 }^{ \ell_{ r_{0} + 1 } } \cdots q_{r}^{ \ell_r } \mid ( N_r / Q ). $$
       Combine this with Lemma \ref{ordern|m} (i), we have 
       \begin{equation}\label{lem3.4-3-1}
           \ord_{ q_{ r_{0} + 1 }^{ \ell_{ r_{0} + 1 } } \cdots q_{r}^{ \ell_r } }(b)  \mid  \ord_{ N_r / Q}(b).
       \end{equation}
       In addition, by Lemma \ref{order-property} and (\ref{ord-qs-b}), 
       \begin{equation}\label{lambda-lcm}
           \begin{aligned}
           \ord_{ q_{ r_{0} + 1 }^{ \ell_{ r_{0} + 1 } } \cdots q_{r}^{ \ell_r } }(b) & = \textup{lcm} \left( \ord_{ q_{ r_{0} + 1 }^{ \ell_{ r_{0} + 1 } } }(b), \cdots, \ord_{ q_{ r }^{ \ell_{ r } } }(b) \right) \\
           & = \textup{lcm} \left( q_{ r_0 + 1 }^{ j_{ r_0 + 1 } } \cdot \ord_{ q_{ r_{0} + 1 }^{ k_{ r_0 + 1 } } }(b), \cdots, q_{r}^{ j_r } \cdot \ord_{ q_{ r }^{ k_r } }(b)  \right).
       \end{aligned}
       \end{equation}
       By Property of lcm in (v) of Lemma \ref{basic-prop-1-lcm}, it follows that  $ q_{ r_0 + 1 }^{ j_{ r_0 + 1 } } \cdots q_{r}^{  j_r } $ divides $$ \textup{lcm} \left( q_{ r_0 + 1 }^{ j_{ r_0 + 1 } } \cdot \ord_{ q_{ r_{0} + 1 }^{ k_{r_0} + 1 } }(b), \cdots, q_{r}^{  j_r } \cdot \ord_{ q_{ r }^{ k_r } }(b)  \right). $$ In other words, by (\ref{lambda-lcm}), we have
       \begin{equation}\label{lem3.4-3-2}
         q_{ r_0 + 1 }^{ j_{ r_0 + 1 } } \cdots q_{r}^{  j_r } \mid \ord_{ q_{ r_{0} + 1 }^{ \ell_{ r_{0} + 1 } } \cdots q_{r}^{ \ell_r } }(b). 
       \end{equation}
       Combine (\ref{lem3.4-3-1}) and (\ref{lem3.4-3-2}), we have 
       $$ q_{ r_0 + 1 }^{ j_{ r_0 + 1 } } \cdots q_{r}^{  j_r } \mid \ord_{ N_r / Q}(b) , $$
       which shows that $J$ is an integer. 
\end{proof}

Finally, we compute $ \ord_{N_r/Q}(b)$. The formula is crucial in finding the well-distributed components.  
\begin{lem}\label{order-prop-Nr}
 For $ s \ge r_0 $, 
    \begin{equation}\label{Carmichael-2}
        \ord_{N_{s+1}/Q}(b) =  q_{s+1}^{ j_{s+1} } \cdot \ord_{N_{s}q_{s+1}^{ k_{s+1} }/Q}(b).
    \end{equation}

\end{lem}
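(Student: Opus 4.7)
The plan is to compute both orders via prime-power decomposition and apply Lemma \ref{order-property}, then pull out the factor $q_{s+1}^{j_{s+1}}$ using the coprimality property (iv) in Lemma \ref{basic-prop-1-lcm}.

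First I would note that since $Q \mid N_{r_0'} \mid N_{r_0}$, we may write
$$N_{s+1}/Q = (N_{r_0}/Q) \cdot q_{r_0+1}^{\ell_{r_0+1}} \cdots q_s^{\ell_s} \cdot q_{s+1}^{\ell_{s+1}},$$
and similarly
$$N_s q_{s+1}^{k_{s+1}}/Q = (N_{r_0}/Q) \cdot q_{r_0+1}^{\ell_{r_0+1}} \cdots q_s^{\ell_s} \cdot q_{s+1}^{k_{s+1}}.$$
Since $N_{r_0}/Q$ is composed of primes strictly smaller than $q_{r_0+1}$ (by definition of $r_0'$), it is coprime to each $q_i$ for $i \ge r_0+1$, and a further prime factorization $N_{r_0}/Q = \prod_p p^{e_p}$ yields a decomposition of both expressions into pairwise coprime prime powers. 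Applying Lemma \ref{order-property} and (\ref{order-prime-b}) then gives
\begin{align*}
\ord_{N_{s+1}/Q}(b) &= \mathrm{lcm}\bigl(\{\ord_{p^{e_p}}(b)\}_p,\, \ord_{q_{r_0+1}^{\ell_{r_0+1}}}(b),\ldots,\ord_{q_s^{\ell_s}}(b),\, q_{s+1}^{j_{s+1}} \cdot \ord_{q_{s+1}^{k_{s+1}}}(b)\bigr),\\
\ord_{N_s q_{s+1}^{k_{s+1}}/Q}(b) &= \mathrm{lcm}\bigl(\{\ord_{p^{e_p}}(b)\}_p,\, \ord_{q_{r_0+1}^{\ell_{r_0+1}}}(b),\ldots,\ord_{q_s^{\ell_s}}(b),\, \ord_{q_{s+1}^{k_{s+1}}}(b)\bigr).
\end{align*}

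The crux of the proof is then to verify that every entry other than $q_{s+1}^{j_{s+1}} \cdot \ord_{q_{s+1}^{k_{s+1}}}(b)$ is coprime to $q_{s+1}$. For each prime $p$ appearing (either a prime factor of $N_{r_0}/Q$ or one of $q_{r_0+1},\ldots,q_s$) we have $p < q_{s+1}$, and $\ord_{p^{e}}(b)$ divides $\phi(p^e) = p^{e-1}(p-1)$, whose prime factors are at most $p < q_{s+1}$. Thus $\gcd(\ord_{p^e}(b), q_{s+1}) = 1$. Moreover, by Theorem \ref{prime power order} and the definition of $k_{s+1}$, $\ord_{q_{s+1}^{k_{s+1}}}(b) = \ord_{q_{s+1}}(b)$ divides $q_{s+1}-1$, hence is also coprime to $q_{s+1}$.

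With this coprimality in hand, Lemma \ref{basic-prop-1-lcm} (iv) applied with $b_1 = q_{s+1}^{j_{s+1}}$, $a_1 = \ord_{q_{s+1}^{k_{s+1}}}(b)$, and the remaining orders as the other arguments, allows us to pull $q_{s+1}^{j_{s+1}}$ out of the lcm, yielding
$$\ord_{N_{s+1}/Q}(b) = q_{s+1}^{j_{s+1}} \cdot \mathrm{lcm}\bigl(\{\ord_{p^{e_p}}(b)\}_p, \ord_{q_{r_0+1}^{\ell_{r_0+1}}}(b),\ldots,\ord_{q_s^{\ell_s}}(b), \ord_{q_{s+1}^{k_{s+1}}}(b)\bigr) = q_{s+1}^{j_{s+1}} \cdot \ord_{N_s q_{s+1}^{k_{s+1}}/Q}(b),$$
which is the claim. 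The main obstacle is the bookkeeping in the coprimality check; the rest is a routine application of the order/lcm machinery already assembled in Section \ref{sec-2}.
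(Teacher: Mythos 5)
Your proposal is correct and follows essentially the same route as the paper's proof: decompose $N_{s+1}/Q$ and $N_s q_{s+1}^{k_{s+1}}/Q$ into pairwise coprime prime powers, apply Lemma \ref{order-property} together with (\ref{order-prime-b}), and extract $q_{s+1}^{j_{s+1}}$ from the lcm via Lemma \ref{basic-prop-1-lcm}(iv), justified by the fact that each remaining order divides a $\phi(p^e)=p^{e-1}(p-1)$ with $p<q_{s+1}$ and is hence coprime to $q_{s+1}$. The only difference is cosmetic bookkeeping (you split off $N_{r_0}/Q$ and the blocks $q_{r_0+1}^{\ell_{r_0+1}}\cdots q_s^{\ell_s}$, while the paper takes a single prime factorization of $N_s/Q$), so the two arguments are the same in substance.
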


\begin{proof} 

    By the fundamental theorem of arithmetic, there exist an integer $ s' $ with $ 1 \le s' \le s $ , $ s' $ positive integers $ \gamma_{1}, \cdots, \gamma_{s'} $ and integers $  i_{1}, \cdots, i_{s'} $ with $ 1 \le i_{1} < \cdots < i_{s'} \le s $ such that 
   \begin{equation}
       N_s / Q = q_{i_{1}}^{\gamma_1} \cdots q_{i_{s'}}^{ \gamma_{s'} }.  
   \end{equation}
   Then we have 
   \begin{equation}\label{Ns+1/Q}
       N_{s+1} / Q = q_{i_{1}}^{\gamma_1} \cdots q_{i_{s'}}^{ \gamma_{s'} } \cdot q_{s+1}^{ \ell_{s+1} }, 
   \end{equation}
   and 
   \begin{equation}\label{Ns+1qs+1/Q}
       N_s q_{s+1}^{k_{s+1}} / Q = q_{i_{1}}^{\gamma_1} \cdots q_{i_{s'}}^{ \gamma_{s'} } \cdot q_{s+1}^{ k_{s+1} }.
   \end{equation}
    It follows that
    \begin{align*}
        \ord_{ N_{s+1}/Q }(b) & = \ord_{  q_{i_{1}}^{\gamma_1} \cdots q_{i_{s'}}^{ \gamma_{s'} } \cdot q_{s+1}^{ \ell_{s+1} } }(b) \quad (\textup{By }(\ref{Ns+1/Q})) \\
        & = \textup{lcm} \left( \ord_{ q_{i_1}^{\gamma_1} }( b ), \cdots,  \ord_{ q_{i_{s'}}^{ \gamma_{s'} } }( b ), \ord_{ q_{s+1}^{ \ell_{s+1} } }(b)  \right) \quad (\textup{by Lemma }\ref{order-property}) \\
        & = \textup{lcm} \left( \ord_{ q_{i_1}^{\gamma_1} }( b ), \cdots,  \ord_{ q_{i_{s'}}^{ \gamma_{s'} } }( b ), q_{s+1}^{ j_{s+1} } \cdot \ord_{ q_{s+1}^{ k_{s+1} } }(b) \right) \quad (\textup{by }\eqref{order-prime-b}) \\
        & = q_{s+1}^{ j_{s+1} } \cdot \textup{lcm} \left( \ord_{ q_{i_1}^{\gamma_1} }( b ), \cdots,  \ord_{ q_{i_{s'}}^{ \gamma_{s'} } }( b ),   \ord_{ q_{s+1}^{ k_{s+1} } }(b) \right) \\
        & = q_{s+1}^{ j_{s+1} } \cdot \ord_{  q_{i_{1}}^{\gamma_1} \cdots q_{i_{s'}}^{ \gamma_{s'} } \cdot q_{s+1}^{ k_{s+1} } }(b)  \quad (\textup{by Lemma }\ref{order-property}) \\
        & = q_{s+1}^{ j_{s+1} } \cdot \ord_{N_{s}q_{s+1}^{ k_{s+1} }/Q}(b) \quad (\textup{by }(\ref{Ns+1qs+1/Q})),
    \end{align*}
       where the fourth equation follows from Lemma \ref{basic-prop-1-lcm} (iv), as $ q_{s+1}^{ j_{s+1} } $ is coprime to all $ \ord_{ q_{i_j}^{\gamma_1} }( b )$ for all $1\le j\le s'$. This holds since for all $ 1 \le i \le s' $, $ \ord_{ q_{i}^{ \gamma_i } }( b ) $ divides $ \phi( q_{i}^{ \gamma_i } ) $, and $ \phi( q_{i}^{ \gamma_i } ) = ( q_i - 1 ) q_{i}^{ \gamma_i - 1 } $ is  coprime to $ q_{s+1} $.  This completes the proof of (\ref{Carmichael-2}).  
\end{proof}


\section{Proof of Theorem \ref{thm-1.5} (II): Distribution of digits}\label{sec-3}

The goal of this section is to prove the following proposition mentioned in Step (1) of the outline of the proof. The constants used in this section will be found in Section \ref{add_sec}. In particular, $J $ defined in (\ref{integer_J}) below is an integer by Proposition \ref{propJ}. 

\begin{pro} ({\bf well-distributed partitions})\label{3.2-number-count}   Let $ m $ be an integer with $ m \ge  n_0 - 1 . $ Let $ I \subset ( m : \infty )$ be an interval of length $ \ord_{ N_r / Q }(b) $. Then the interval $ I $ can be written as the disjoint union of 
\begin{equation}\label{integer_J}
    J:= \ord_{ N_r / Q }(b) \cdot q_{ r_0 + 1 }^{ - j_{ r_0 + 1 } } \cdots q_r^{ - j_r }
\end{equation}
     many subsets $ \Lambda_1, \cdots, \Lambda_J $, where $ \Lambda_k $ is well-distributed associated with $ \Pi_{ r_0, r } $ for all $ 1 \le k \le J $. 
\end{pro}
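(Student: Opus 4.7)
The plan is to establish that the restriction $\Pi_{r_0, r}\colon I \to Y_{r_0, r}$ is surjective with fibers of uniform cardinality $J$; the desired partition then arises by choosing, for each $y \in Y_{r_0, r}$, an enumeration of the $J$ preimages in $\Pi_{r_0, r}^{-1}(y) \cap I$, and letting $\Lambda_k$ collect the $k$-th preimage across all $y$. Each such $\Lambda_k$ is automatically well-distributed (by construction $\Pi_{r_0,r}(\Lambda_k)=Y_{r_0,r}$ and $|\Lambda_k|=|Y_{r_0,r}|$), and the $\Lambda_k$ partition $I$ because they are disjoint transversals of all the fibers.

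To obtain this uniform $J$-to-one property, I would first use Lemma \ref{equivalent-condition} to rephrase the digit-equality condition at positions $[L_s + k_{s+1}: L_{s+1} - 1]$ as a dependence on $N \bmod N_{s+1}$; explicitly the $s$-th chunk of high digits equals $\pi_s(N) := \lfloor N/(N_s q_{s+1}^{k_{s+1}})\rfloor \bmod q_{s+1}^{j_{s+1}}$, which factors through $\mathbb{Z}/N_{s+1}\mathbb{Z}$. Combining chunks gives a single map $\pi\colon \mathbb{Z}/N_r\mathbb{Z} \to Y_{r_0, r}$ with $\Pi_{r_0, r}(n) = \pi\bigl((hb^n - hb^m) \bmod N_r\bigr)$. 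Using $\gcd(hb^n, N_r) = Q$ for $n \ge n_0$ (Lemma \ref{gcd-result}) together with $\gcd(b, N_r/Q) = 1$ (Lemma \ref{gcd(b,Nr)}) and Corollary \ref{residual-class}, the image set $E := \{(hb^n - hb^m) \bmod N_r : n \in I\}$ has exactly $\ord_{N_r/Q}(b) = |I|$ elements, so $n\mapsto (hb^n-hb^m)\bmod N_r$ bijects $I$ with $E$, and the proposition reduces to showing that $\pi|_E$ is surjective with uniform fibers of size $J$.

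The core of the argument is the Chinese remainder theorem combined with Lemma \ref{order-prop-Nr}. CRT decomposes $\mathbb{Z}/N_r\mathbb{Z}$ as $(\mathbb{Z}/N_{r_0}\mathbb{Z}) \times \prod_{s=r_0+1}^{r}(\mathbb{Z}/q_s^{\ell_s}\mathbb{Z})$, and $\pi$ respects this decomposition: it ignores the $N_{r_0}$-factor and extracts the top $j_s$ base-$q_s$ digits on each $q_s^{\ell_s}$-factor. On the other side, the projection of $\langle b\rangle \subset (\mathbb{Z}/(N_r/Q)\mathbb{Z})^\times$ to each $(\mathbb{Z}/q_s^{\ell_s}\mathbb{Z})^\times$-factor is cyclic of order $\ord_{q_s^{\ell_s}}(b) = q_s^{j_s}\cdot \ord_{q_s^{k_s}}(b)$ by \eqref{order-prime-b}. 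I would then proceed by induction on $r - r_0$, with the inductive step peeling off the highest-index chunk: enlarging the modulus from $N_{r-1}q_r^{k_r}/Q$ to $N_r/Q$ multiplies the order by exactly $q_r^{j_r}$ (Lemma \ref{order-prop-Nr}), and the $q_r^{j_r}$ additional representatives realize every possible pattern for the new $r$-th chunk of high digits exactly once; combined with the inductive hypothesis for chunks $r_0,\ldots,r-1$, this yields uniform multiplicity $J$ across $Y_{r_0,r}$.

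The main obstacle I expect is the nonlinearity of the floor operation defining $\pi_s$, compounded by the fixed translation by $hb^m$ which could in principle generate carries from the low positions $[0: L_s + k_{s+1}-1]$ into the high-digit positions. My strategy for handling this is to work entirely at the level of residue classes in $\mathbb{Z}/N_r\mathbb{Z}$, where all carries are already encoded in the residue; since the uniform-fiber property of $\pi|_E$ is translation-invariant (any additive shift permutes the preimages $\pi^{-1}(y)$ without changing their sizes), the problem reduces to analyzing $\pi$ on the purely algebraic set $Qu\langle b\rangle \bmod N_r$ with $u = hb^{n_0}/Q$, on which the CRT factorization and the order computations from Section \ref{add_sec} can be applied cleanly without any appeal to the ordering of integers.
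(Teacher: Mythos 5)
Your top-level plan --- show that every fiber $\Pi_{r_0,r}^{-1}\{y\}\cap I$ has the same cardinality $J$ and then take transversals --- is exactly the paper's strategy, and your first reduction (that $n\mapsto (hb^{n}-hb^{m}) \bmod N_r$ maps $I$ bijectively onto a set of $\ord_{N_r/Q}(b)$ residues) is sound and mirrors Lemma \ref{distri-esti}. The gap lies in the two devices you use to get the uniform-fiber property. First, the digit map $\pi$ does \emph{not} respect the CRT decomposition $\mathbb{Z}/N_r\mathbb{Z}\cong(\mathbb{Z}/N_{r_0}\mathbb{Z})\times\prod_{s}(\mathbb{Z}/q_s^{\ell_s}\mathbb{Z})$: the mixed-radix digits of $x$ in the $q_{s+1}$-block are the base-$q_{s+1}$ digits of $\lfloor x/N_s\rfloor$, which depend on $x \bmod N_{s+1}$, i.e.\ on all lower CRT components, not only on $x \bmod q_{s+1}^{\ell_{s+1}}$ (already with $M_1=2$, $M_2=3$, the digit $\mathsf{d}_1(x)=\lfloor x/2\rfloor$ is not a function of $x\bmod 3$: compare $x=0$ and $x=3$). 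Hence your inductive assertion that ``the $q_r^{j_r}$ additional representatives realize every possible pattern for the new chunk exactly once'' does not follow from a factor-by-factor picture; it is precisely the nontrivial content that the paper establishes in Lemma \ref{multi-lemma}, via a counting argument at the nested moduli $N_sq_{s+1}^{k_{s+1}}/Q$ and $N_{s+1}/Q$ (image sizes from Lemma \ref{distri-esti}, the ratio $q_{s+1}^{j_{s+1}}$ from Lemma \ref{order-prop-Nr}, and a pigeonhole forcing each fiber to be full).

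Second, the translation step is invalid: an additive shift does not permute the fibers of a digit-extraction map at non-initial positions, because of carries. A fiber of $\pi$ is an intersection of sets of the form $\left\{x:\left\lfloor x/(N_sq_{s+1}^{k_{s+1}})\right\rfloor \equiv v \ (\textup{mod } q_{s+1}^{j_{s+1}})\right\}$, and subtracting $hb^{m}$ moves such a set to a translate that is generally not of this form unless $hb^{m}$ is divisible by $N_sq_{s+1}^{k_{s+1}}$; so uniform fibers over $Qu\langle b\rangle$ do not transfer to the shifted set $E$. (Toy example with the tens digit mod $100$: the set $\{30,31,40,41\}$ has uniform fibers, but its translate $\{39,40,49,50\}$ does not.) The paper avoids both issues by never decomposing via CRT and never removing the shift: it works throughout with $hb^{n}-hb^{m}$ modulo the nested moduli, uses translation only to count image cardinalities (Lemma \ref{basic-prop-2}(ii)), and derives fiber-uniformity from Lemma \ref{3.2-pre-key-lem} and Corollary \ref{3.2-key-lem} before running essentially the same induction on $r$ that you propose. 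So your skeleton matches the paper, but the two steps carrying the real weight need to be replaced by arguments of that kind.
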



To begin, we first introduce the following natural map induced by the digit representation (\ref{digit-representation-1}) of $ hb^n - h b^m $ for $ n \in ( m : \infty ) $. Let $ N \in \mathbb{N} $. Define 
 $$ X_{N} := \prod_{ i = 1 }^{ N } \{ 0, 1, \cdots, M_i - 1 \} \subset \mathbb{R}^{ N }. $$
 Define $ \Phi_{ N }: ( m : \infty ) \to X_{ N }  $ by
 \begin{equation}\label{eq-Phi}
 n \mapsto ( \mathsf{d}_{ 0 }( hb^{n} - hb^m ), \cdots, \mathsf{d}_{ N - 1 }( hb^{n} - hb^m ) ). \end{equation}

 The following lemma is a direct corollary of Lemma \ref{equivalent-condition}. 
 \begin{lem}\label{mod&Pi}
     Let $ N \in \mathbb{N} $ and $ \Lambda  $ be a subset of $ ( m : \infty ) $. Then 
     $$ \# \{ hb^n - hb^m \textup{ (mod } M_1 \cdots M_N) : n \in \Lambda \} = \# \{ \Phi_{ N }(n) : n \in \Lambda \}. $$
 \end{lem}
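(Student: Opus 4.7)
The plan is to derive this lemma as a direct consequence of Lemma \ref{equivalent-condition}. The key observation is that both sides of the claimed identity count the number of equivalence classes of a natural relation on $\Lambda$, and Lemma \ref{equivalent-condition} says these two relations coincide.

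First, I would reinterpret the two cardinalities. Two indices $n_1, n_2 \in \Lambda$ contribute the same residue to $\{ hb^n - hb^m \ (\textup{mod } M_1\cdots M_N) : n \in \Lambda \}$ if and only if $hb^{n_1} - hb^m \equiv hb^{n_2} - hb^m \ (\textup{mod } M_1\cdots M_N)$. They contribute the same tuple to $\{\Phi_N(n) : n \in \Lambda\}$ if and only if $\mathsf{d}_j(hb^{n_1} - hb^m) = \mathsf{d}_j(hb^{n_2} - hb^m)$ for every $0 \le j \le N-1$, by the definition of $\Phi_N$ in \eqref{eq-Phi}. Thus each side of the desired equality equals the number of equivalence classes of the corresponding relation on $\Lambda$.

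Since every $n \in \Lambda \subset (m:\infty)$ satisfies $n > m$, the integer $hb^n - hb^m$ is non-negative, so Lemma \ref{equivalent-condition} applies with $a = hb^{n_1}-hb^m$ and $b = hb^{n_2}-hb^m$ and tells us that the two equivalence relations above are identical. Hence the induced partitions of $\Lambda$ coincide, and therefore the two image sets have the same cardinality, which is the claim. The only thing to verify is the non-negativity hypothesis of Lemma \ref{equivalent-condition}, which is automatic from $\Lambda \subset (m:\infty)$; there is no substantive obstacle, consistent with the authors' remark that this is a direct corollary of Lemma \ref{equivalent-condition}.
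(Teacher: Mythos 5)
Your proof is correct and matches the paper exactly: the paper gives no separate proof of this lemma, simply remarking that it is a direct corollary of Lemma~\ref{equivalent-condition}, and your argument spells out precisely why — the two sets are images of $\Lambda$ under maps whose induced equivalence relations coincide by Lemma~\ref{equivalent-condition}, so they have equal cardinality.
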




Let us first investigate the injectivity and surjectivity of $\Phi_{N}$ below. 

\begin{lem}\label{distri-esti} Let $ r $ be an integer with $ r \ge r_0 + 1 $. Let $ m $ be an integer with $ m \ge n_0 - 1 . $ Let $ s $ and $ j $ be integers with $r_{0} \le s \le r - 1 $ and $ 0 \le j \le \ell_{s+1} $, respectively. 

Then the following statements hold:

 \begin{itemize}
     \item[(i)] Let $  I \subset ( m : \infty ) $ be an interval whose length is at least $ \ord_{ N_s q_{s+1}^{j}/Q }(b) $. Then
     \begin{equation}\label{lem-3.4-eq1}
         \#\{ \Phi_{ L_s + j }( n ) : n \in I  \} = \ord_{ N_s q_{s+1}^{j}/Q }(b). 
     \end{equation} 
     \item[(ii)] Let $  I_{0} \subset ( m : \infty ) $ be an interval of length $ \ord_{ N_s q_{s+1}^{j}/Q }(b) $. Let $ n_1, n_2 \in I_0  $. Then 
     $$
     n_1=n_2 ~ \Longleftrightarrow ~ \Phi_{ L_s + j }( n_1 ) = \Phi_{ L_s + j }( n_2 ).
     $$
     i.e. $\Phi_{L_s+j}$ is injective on $I_0$.
 \end{itemize}
\end{lem}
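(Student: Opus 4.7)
My plan is to translate the cardinality question for $\Phi_{L_s+j}$ into a congruence-counting question modulo $N_s q_{s+1}^j$, and then further reduce it to counting powers of $b$ modulo $N_s q_{s+1}^j/Q$, at which point Corollary \ref{residual-class} finishes the job.

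\textbf{Step 1: Reduction to counting powers of $b$ modulo $N_s q_{s+1}^j/Q$.} Since $M_1 \cdots M_{L_s+j} = N_s q_{s+1}^j$, Lemma \ref{mod&Pi} gives
\[
\#\{\Phi_{L_s+j}(n) : n \in I\} = \#\{hb^n - hb^m \bmod{N_s q_{s+1}^j} : n \in I\}.
\]
Removing the constant $hb^m$ via Lemma \ref{basic-prop-2}(ii), this equals $\#\{hb^n \bmod{N_s q_{s+1}^j} : n \in I\}$. Now $m \ge n_0 - 1$ forces $n \ge n_0$ throughout $I$, so Lemma \ref{gcd-result}(ii) yields $\gcd(hb^n, N_s q_{s+1}^j) = Q$; in particular $Q \mid hb^n$ for every $n \in I$ and $Q \mid N_s q_{s+1}^j$. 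Applying Lemma \ref{basic-prop-2}(iii),
\[
\#\{hb^n \bmod{N_s q_{s+1}^j} : n \in I\} = \#\{hb^n/Q \bmod{N_s q_{s+1}^j/Q} : n \in I\}.
\]

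\textbf{Step 2: Reducing to a pure power-of-$b$ calculation.} Writing $hb^n/Q = (hb^{n_0}/Q) \cdot b^{n-n_0}$ for $n \ge n_0$, and noting that Lemma \ref{gcd-result}(ii) (applied at $n = n_0$) together with Lemma \ref{basic-prop-1}(ii) gives $\gcd(hb^{n_0}/Q, N_s q_{s+1}^j / Q) = 1$, Lemma \ref{basic-prop-2}(i) allows us to cancel the factor $hb^{n_0}/Q$, leaving
\[
\#\{\Phi_{L_s+j}(n) : n \in I\} = \#\{b^{n-n_0} \bmod{N_s q_{s+1}^j / Q} : n \in I\}.
\]
The substitution $k = n - n_0$ transforms this into $\#\{b^k \bmod{N_s q_{s+1}^j/Q} : k \in I - n_0\}$, where $I - n_0$ is still a discrete interval of the same length as $I$.

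\textbf{Step 3: Applying the order bound.} By Lemma \ref{gcd(b,Nr)}(i) we have $\gcd(b, N_s q_{s+1}^j/Q) = 1$, so $\ord_{N_s q_{s+1}^j/Q}(b)$ is well-defined. Since $I$ (and hence $I - n_0$) has length at least $\ord_{N_s q_{s+1}^j/Q}(b)$, Corollary \ref{residual-class} delivers
\[
\#\{b^k \bmod{N_s q_{s+1}^j/Q} : k \in I - n_0\} = \ord_{N_s q_{s+1}^j / Q}(b),
\]
proving (i). For (ii), apply (i) with $I = I_0$, which has length exactly $\ord_{N_s q_{s+1}^j/Q}(b)$; then $\Phi_{L_s+j}$ maps the $\ord_{N_s q_{s+1}^j/Q}(b)$ elements of $I_0$ onto a set of exactly the same cardinality, so by pigeonhole it is injective on $I_0$.

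The bookkeeping is routine once the divisions by $Q$ and the cancellation of $hb^{n_0}/Q$ are justified; the only place where real content enters is the invocation of Lemma \ref{gcd-result}(ii) and Lemma \ref{gcd(b,Nr)}(i), which rely on the careful choice of $n_0$ and $r_0$ made in Section \ref{add_sec}. I do not anticipate a genuine obstacle, but the step most likely to need care in writing is the justification that $Q \mid hb^n$ and $\gcd(b, N_s q_{s+1}^j/Q) = 1$ uniformly across the range of $s$ and $j$ appearing in the lemma.
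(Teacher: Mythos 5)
Your proposal is correct and takes essentially the same route as the paper's proof of part (i): the identical chain of reductions (Lemma \ref{mod&Pi}, removing $hb^m$ via Lemma \ref{basic-prop-2}(ii), dividing by $Q$ using Lemma \ref{gcd-result}(ii) together with Lemma \ref{basic-prop-2}(iii), cancelling the coprime factor $hb^{n_0}/Q$ via Lemma \ref{basic-prop-2}(i), and finally invoking Lemma \ref{gcd(b,Nr)}(i) and Corollary \ref{residual-class}). Your pigeonhole derivation of (ii) from (i) is a valid small shortcut; the paper instead reruns the congruence chain to show $\Phi_{L_s+j}(n_1)=\Phi_{L_s+j}(n_2)$ forces $b^{n_1-n_0}\equiv b^{n_2-n_0}$ modulo $N_sq_{s+1}^{j}/Q$ and hence $n_1=n_2$, but both arguments are sound.
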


\begin{proof}
    We start by proving part (i).
    First, by Lemma \ref{mod&Pi}, 
    \begin{equation}\label{pi&mod-a}
       \# \{ \Phi_{ L_s + j }(n) : n \in I \} =  \#\{ hb^n - hb^m \textup{ (mod } N_s q_{s+1}^{j}) : n \in I  \}. 
    \end{equation}
    By Lemma \ref{basic-prop-2} (ii), the right hand side of (\ref{pi&mod-a}) is equal to 
    \begin{equation}\label{lem-3.4-eq2}
         \#\{ hb^n \textup{ (mod } N_s q_{s+1}^{j}) : n \in I  \}.
    \end{equation} 
    Note that for $ n \in I $, we have $ n \ge m+1 \ge n_0 $. Then by Lemma \ref{gcd-result} (ii), for all $ n\in I $, we have
     \begin{equation}\label{eq-3.1}\gcd( hb^{n}, N_s q_{s+1}^{j} ) = Q ~\Longrightarrow ~     \gcd \left( \frac{hb^{n} }{Q} , \frac{N_{s} q_{ s + 1 }^{j} }{ Q}  \right) = 1 \end{equation}
     by a property of gcd (c.f. Lemma \ref{basic-prop-1} (ii)).   Therefore, another property of gcd implies (c.f. Lemma \ref{basic-prop-2} (iii)), 
    $$ \#\{ hb^n \textup{ (mod } N_s q_{s+1}^{j}) : n \in I  \} = \# \left\{ \frac{hb^n }{ Q}  \ \left( \textup{mod } \frac{N_s q_{s+1}^{j} }{Q} \right) : n\in I  \right\}. $$
    Hence, to prove (\ref{lem-3.4-eq1}), it suffices to show 
    \begin{equation} \label{eq-3.1-c1}
        \# \left\{ \frac{hb^n}{ Q } \ \left( \textup{mod } \frac{N_s q_{s+1}^{j} }{ Q} \right)  : n\in I  \right\} = \ord_{ N_s q_{s+1}^{j}/Q }(b).  
    \end{equation}
    Note that for $ n \in I  $, 
    \begin{equation}\label{decomp}
        \frac{hb^n }{ Q} = \left( \frac{hb^{n_{0}} }{Q} \right) \cdot b^{n-n_0}. 
    \end{equation} 
        Combining (\ref{decomp}), (\ref{eq-3.1}) with $ n = n_0 $ and Lemma \ref{basic-prop-2} (i), we obtain
    \begin{equation}\label{eq3.1-c2}
         \# \left\{ \frac{hb^n }{Q}   \ \left(\textup{mod } \frac{N_s q_{s+1}^{j} }{Q}  \right)  : n\in I  \right\} = \# \left\{ b^{ n - n_0 }    \ \left(\textup{mod } \frac{N_s q_{s+1}^{j} }{Q} \right)  : n\in I  \right\}.   
    \end{equation}

    By Lemma \ref{gcd(b,Nr)} (i), $ b $ and $ N_s q_{s+1}^{j} / Q $ are coprime, then by Corollary \ref{residual-class}, we have 
    \begin{equation}\label{eq3.1-c3}
        \# \left\{ b^{ n - n_0 }    \ \left(\textup{mod } \frac{N_s q_{s+1}^{j} }{Q} \right)  : n \in I  \right\} = \ord_{ N_s q_{s+1}^{j} / Q }(b).
    \end{equation}
    Combining (\ref{eq3.1-c2}) and (\ref{eq3.1-c3}) gives (\ref{eq-3.1-c1}). This completes the proof of part (i).

\smallskip

    For part (ii), we just need to establish the converse side. Suppose that $\Phi_{L_s+j}(n_1) =\Phi_{L_s+j}(n_2) $, where $ n_1, n_2 \in I_0 $. By Lemma \ref{equivalent-condition}, this is also equivalent to $hb^{n_1} - hb^m \equiv hb^{n_2} - hb^m \textup{ (mod } N_s q_{s+1}^{j})$. This is equivalent to  
    \begin{equation}\label{lem-3.4-18}
        hb^{n_1} \equiv hb^{n_2} \ (\textup{mod} \ N_s q_{s+1}^{j} ) ~\Longleftrightarrow~   \frac{hb^{n_1}}{Q} \equiv \frac{hb^{n_2}}{Q} \ \left(\textup{mod} \ \frac{N_s q_{s+1}^{j}}{Q} \right),
    \end{equation}
    since $ Q $ is the common divisor of $ hb^{n_1}, hb^{n_2} $ and $ N_s q_{s+1}^{j} $.
    Since $ n_1, n_2 \in I_0 $, we have $ n_1, n_2 \ge n_0 $. Note that $ hb^{n_i}/Q = b^{ n_i - n_0 } \cdot hb^{n_0}/Q $ for $i = 1,2$. Combining this with (\ref{eq-3.1}) (with $ n = n_0 $), we find that (\ref{lem-3.4-18}) holds if and only if the following congruence holds:
    \begin{equation}\label{lem-3.4-19}
        b^{n_1 - n_0} \equiv b^{n_2 - n_0} \ \left(\textup{mod} \ \frac{N_s q_{s+1}^{j} }{Q} \right)  
    \end{equation}
    (c.f. Property of gcd in Lemma \ref{basic-prop-1} (iii)). Note that the interval $ I_0 $ has length $ \ord_{ N_s q_{s+1}^{j}/Q }(b) $. Then, by Corollary \ref{residual-class}, (\ref{lem-3.4-19}) holds if and only if $ n_1 - n_0 = n_2 - n_0 $, or equivalently, $ n_1 = n_2 $. This proves part (ii).
    \end{proof}

\smallskip

We recall the definition of $Y_{c,d}$ in (\ref{eqY_cd}). In particular, we have 
$$
Y_{s,s+1} = \{0,1\cdots, q_{s+1}-1\}^{j_{s+1}}.
$$
We also note that for $ n \in \mathbb{N} $, by the definition $\Phi_N$ and $\Pi_{r,s}$, we have the following
      \begin{equation}\label{Phi_L_s+1}
      \Phi_{ L_{s+1} }( n ) = ( \Phi_{ L_s + k_{s+1} }(n), \Pi_{ s, s + 1}(n) ). 
      \end{equation}

\begin{lem}\label{multi-lemma} Let $ r $ be an integer with $ r \ge r_0 + 1 $. Let $ m $ be an integer with $ m \ge n_0 - 1  . $ Let $s$ be an integer satisfying $ r_0 \le s \le r - 1 $, and let $ I \subset ( m : \infty ) $ be an interval with length $ 
    \ord_{ N_{s+1} / Q }(b). $  Then for all  $ n_1 \in I $ and $ y \in Y_{ s, s + 1 } $, there exists a unique $ n_2 \in I $ such that $\Pi_{ s, s + 1}(n_2)=y$ and
    \begin{equation}\label{lem4.5-condi}
       \Phi_{ L_{s+1} }( n_2 ) = (\Phi_{L_s+k_{s+1}}(n_1),~y).
    \end{equation}
\end{lem}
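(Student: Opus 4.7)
The plan is to prove Lemma \ref{multi-lemma} by a fiber counting argument that combines the decomposition (\ref{Phi_L_s+1}) with the cardinality formulas from Lemma \ref{distri-esti} applied at two nested scales, $L_s + k_{s+1}$ and $L_{s+1}$, and then invokes the key factorization $\ord_{N_{s+1}/Q}(b) = q_{s+1}^{j_{s+1}} \cdot \ord_{N_s q_{s+1}^{k_{s+1}}/Q}(b)$ from Lemma \ref{order-prop-Nr}.

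First, I would apply Lemma \ref{distri-esti}(i) to the interval $I$ at the level $L_{s+1}$: since $|I| = \ord_{N_{s+1}/Q}(b)$, this gives $\#\Phi_{L_{s+1}}(I) = \ord_{N_{s+1}/Q}(b)$, and by Lemma \ref{distri-esti}(ii), $\Phi_{L_{s+1}}$ is injective on $I$. Next, I would apply Lemma \ref{distri-esti}(i) at the coarser level $L_s + k_{s+1}$; to do so I need to check that $|I| \ge \ord_{N_s q_{s+1}^{k_{s+1}}/Q}(b)$, which follows since $N_s q_{s+1}^{k_{s+1}}/Q$ divides $N_{s+1}/Q$ and therefore, by Lemma \ref{ordern|m}(i), $\ord_{N_s q_{s+1}^{k_{s+1}}/Q}(b)$ divides $\ord_{N_{s+1}/Q}(b) = |I|$. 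This yields $\#\Phi_{L_s+k_{s+1}}(I) = \ord_{N_s q_{s+1}^{k_{s+1}}/Q}(b)$.

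Now the identity (\ref{Phi_L_s+1}) says that $\Phi_{L_{s+1}}(n) = (\Phi_{L_s+k_{s+1}}(n), \Pi_{s,s+1}(n))$, with the second coordinate taking values in $Y_{s,s+1}$, a set of cardinality $q_{s+1}^{j_{s+1}}$. For each $u \in \Phi_{L_s+k_{s+1}}(I)$, set $N(u) := \#\{y \in Y_{s,s+1} : (u,y) \in \Phi_{L_{s+1}}(I)\}$, so trivially $N(u) \le q_{s+1}^{j_{s+1}}$. Summing over $u$ and using Lemma \ref{order-prop-Nr},
$$\sum_{u \in \Phi_{L_s+k_{s+1}}(I)} N(u) = \#\Phi_{L_{s+1}}(I) = q_{s+1}^{j_{s+1}} \cdot \ord_{N_s q_{s+1}^{k_{s+1}}/Q}(b) = q_{s+1}^{j_{s+1}} \cdot \#\Phi_{L_s+k_{s+1}}(I).$$
Since each $N(u)$ is bounded by $q_{s+1}^{j_{s+1}}$ and the average equals that bound, equality is forced for every $u$. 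Thus $\Phi_{L_{s+1}}(I) = \Phi_{L_s+k_{s+1}}(I) \times Y_{s,s+1}$.

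To conclude, note that $u_0 := \Phi_{L_s+k_{s+1}}(n_1)$ lies in $\Phi_{L_s+k_{s+1}}(I)$, so the fiber product structure above gives, for every $y \in Y_{s,s+1}$, at least one $n_2 \in I$ with $\Phi_{L_{s+1}}(n_2) = (u_0, y)$; uniqueness of such $n_2$ follows immediately from the injectivity of $\Phi_{L_{s+1}}$ on $I$ established in the first step. I do not anticipate a serious obstacle; the delicate point is merely making sure the two applications of Lemma \ref{distri-esti} are justified (in particular that the interval is long enough at the coarser scale), and that the multiplicative splitting of $\ord_{N_{s+1}/Q}(b)$ in Lemma \ref{order-prop-Nr} exactly matches $\#Y_{s,s+1}$, which forces the fibers to be full.
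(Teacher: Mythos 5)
Your proposal is correct and follows essentially the same route as the paper: two applications of Lemma \ref{distri-esti}(i) at the scales $L_s+k_{s+1}$ and $L_{s+1}$, the factorization of $\ord_{N_{s+1}/Q}(b)$ from Lemma \ref{order-prop-Nr}, an averaging/pigeonhole argument forcing every fiber over $\Phi_{L_s+k_{s+1}}(I)$ to be all of $Y_{s,s+1}$, and injectivity from Lemma \ref{distri-esti}(ii) for uniqueness. Your explicit check that $|I|\ge\ord_{N_sq_{s+1}^{k_{s+1}}/Q}(b)$ via Lemma \ref{ordern|m}(i) is a minor point the paper leaves implicit, but otherwise the arguments coincide.
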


\begin{proof}
      By Lemma \ref{distri-esti} (i), the following two equalities hold:
      $$ \# \{ \Phi_{ L_{s+1} } ( n ): n \in I \} = \ord_{ N_{s+1}/ Q }(b), \textup{ and } \# \{ \Phi_{ L_s + k_{s+1} }( n ): n \in I \} = \ord_{ N_s q_{s+1}^{k_{s+1}} / Q }(b). $$ 
      By Lemma \ref{order-prop-Nr} and the above, we have
      \begin{equation}\label{cardi-relation}
       \# \{ \Phi_{ L_{s+1} } ( n ): n \in I \} = q_{ s + 1 }^{ j_{s+1} } \cdot \# \{ \Phi_{ L_s + k_{s+1} }( n ): n \in I \}.   
      \end{equation}
              Let $ x_1, \cdots, x_{W } $ be all the distinct elements in $ \{ \Phi_{ L_s + k_{s+1} }( n ): n \in I \} $, where $ W= \# \{ \Phi_{ L_s + k_{s+1} }( n ): n \in I \} $. Let also
      $$
      A_i := \{ \Phi_{ L_{s+1} }(n): n \in I, \Phi_{ L_s + k_{s+1} }(n) = x_i \}, ~~i =  1,\cdots, W.
      $$
      We claim that $A_i = \{x_i\}\times Y_{s,s+1}$. To see this, equality (\ref{cardi-relation}) can be rewritten as 
      \begin{equation}\label{cardi-relation-rev}
          \sum_{ i = 1 }^{W } \# A_i = q_{ s + 1 }^{ j_{s+1} } \cdot W.
      \end{equation}
      
      As $ \# \{ \Pi_{ s, s + 1}(n) : n \in I \} \le q_{ s + 1 }^{ j_{s+1} } $, with the first $ L_s + k_{s+1} $ coordinates fixed, it follows that  $ \# A_i \le q_{ s + 1 }^{ j_{s+1} } $ for all $ 1 \le i \le W $. On the other hand, (\ref{cardi-relation-rev}) forces that for all $ 1 \le i \le W $, the cardinality of $ A_i $ is $ q_{ s + 1 }^{ j_{s+1} } $. Note that for all $ 1 \le i \le W $, clearly, $ A_i $ is a subset of $ \{ x_i \} \times Y_{ s, s+1 } $ and the cardinality of $ \{ x_i \} \times Y_{ s, s+1 } $ is $ q_{ s + 1 }^{ j_{s+1} } $. It follows that $ A_i = \{ x_i \} \times Y_{ s, s+1 } $ for $ 1 \le i \le W $. 

\smallskip

      With the claim justified. We take any $ n_1 \in I $. Since $ \Phi_{ L_s + k_{s+1} }( n_1 ) $ is an element in $ \{ x_i \}_{ 1 \le i \le W } $. By the claim,  we have 
      $$ \{ \Phi_{ L_{s+1} }( n ): n \in I, \Phi_{ L_s + k_{s+1} }( n ) = \Phi_{ L_s + k_{s+1} }( n_1 )\} = \{ \Phi_{ L_s + k_{s+1} }( n_1 ) \} \times Y_{ s, s+1 }. $$
      Therefore, for all $ y \in Y_{s,s+1} $, there always exists $ n_2 \in I $ such that (\ref{lem4.5-condi}) holds.

      Finally, we will show $ n_2 $ satisfying (\ref{lem4.5-condi}) is unique. Suppose that we have $n_2,n_2'$ both satisfy (\ref{lem4.5-condi}). Then it implies that 
      $$
      \Phi_{ L_{s+1} }( n_2) = \Phi_{ L_{s+1} }( n_2').
      $$
      Applying Lemma \ref{distri-esti} (ii), $ n_2 = n_{2}' $ follows. 
\end{proof}

\begin{lem}\label{3.2-pre-key-lem}
    Let $ r $ be an integer with $ r \ge r_0 + 1 $. Let $ m $ be an integer with $ m \ge n_0 - 1 . $ Let $ s $ be an integer satisfying $ r_0 \le s \le r - 1 $, and let $ I \subset ( m : \infty ) $ be an interval with length $ \ord_{ N_{s+1} / Q }(b).$ Then 
 \begin{enumerate}
   
    \item for $ s \ge r_0 + 1 $, for all $ x \in Y_{ r_0 , s } $, and $ y, z \in Y_{ s, s+1 } $,
    \begin{equation}\label{x-y-z}
    \# \left( \Pi_{ r_0, s + 1 }^{ -1 } \{  (x,y)\}  \cap I \right)  = \# \left( \Pi_{ r_0, s + 1 }^{ -1 } \{  (x,z)\}  \cap I \right),
\end{equation}
where $ (x,y)$, $(x,z)\in Y_{ r_0 , s }\times Y_{ s, s+1 }$. 
  \item for all $ y, z \in Y_{ r_0, r_0 + 1 } $,
    \begin{equation}\label{x-y}
    \# \left( \Pi_{ r_0, r_0 + 1 }^{ -1 } \{ y  \}  \cap I \right) = \# \left( \Pi_{ r_0, r_0 + 1 }^{ -1 } \{ z  \}  \cap I \right).
    \end{equation}
 \end{enumerate}   
\end{lem}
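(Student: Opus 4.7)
The plan is to extract from the proof of Lemma \ref{multi-lemma} the following fibration structure of $I$, and then exploit the positional fact that $\Pi_{r_0,s}$ only reads digits at positions strictly below $L_s+k_{s+1}$.

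\textbf{Fibration structure (from Lemma \ref{multi-lemma}).} Set $\widetilde{\Phi}:=\Phi_{L_s+k_{s+1}}$. For $r_0\le s\le r-1$ and $I\subset(m:\infty)$ of length $\ord_{N_{s+1}/Q}(b)$, the proof of Lemma \ref{multi-lemma} shows that $I$ is the disjoint union of its $\widetilde{\Phi}$-fibers $\widetilde{\Phi}^{-1}(x_i)\cap I$, $i=1,\ldots,W$, where $W=\ord_{N_s q_{s+1}^{k_{s+1}}/Q}(b)$. Each fiber has exactly $q_{s+1}^{j_{s+1}}$ elements, and $\Pi_{s,s+1}$ restricts to a \emph{bijection} from each fiber onto $Y_{s,s+1}$. (Indeed, the claim $A_i=\{x_i\}\times Y_{s,s+1}$ proved there, combined with the injectivity of $\Phi_{L_{s+1}}$ on $I$ from Lemma \ref{distri-esti}(ii), gives exactly this.)

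\textbf{Proof of Part (1).} Fix $s\ge r_0+1$ and $x\in Y_{r_0,s}$, and set $B_x=\{n\in I:\Pi_{r_0,s}(n)=x\}$. By the definition of $\Pi_{r_0,s}$, the digits read off are indexed by positions in $\bigcup_{i=r_0}^{s-1}[L_i+k_{i+1}:L_{i+1}-1]$, each of which is strictly less than $L_s+k_{s+1}$. Therefore $\Pi_{r_0,s}(n)$ is a function of $\widetilde{\Phi}(n)$. In particular $B_x$ is a union of some subfamily $\mathcal{I}_x$ of the $\widetilde{\Phi}$-fibers. By the fibration structure, on each fiber $\Pi_{s,s+1}$ is a bijection onto $Y_{s,s+1}$, so for every $y\in Y_{s,s+1}$,
\[
\#\bigl(\Pi_{r_0,s+1}^{-1}\{(x,y)\}\cap I\bigr)=\#\mathcal{I}_x,
\]
which does not depend on $y$. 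This yields \eqref{x-y-z}.

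\textbf{Proof of Part (2).} Apply the fibration structure with $s=r_0$. The interval $I$ of length $\ord_{N_{r_0+1}/Q}(b)$ splits into $\widetilde{\Phi}$-fibers, each of size $q_{r_0+1}^{j_{r_0+1}}$, with $\Pi_{r_0,r_0+1}$ acting as a bijection from each fiber onto $Y_{r_0,r_0+1}$. Hence for every $y\in Y_{r_0,r_0+1}$,
\[
\#\bigl(\Pi_{r_0,r_0+1}^{-1}\{y\}\cap I\bigr)=\ord_{N_{r_0}q_{r_0+1}^{k_{r_0+1}}/Q}(b),
\]
which is independent of $y$, proving \eqref{x-y}.

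\textbf{Main obstacle.} Essentially all analytic work is already concentrated in Lemma \ref{multi-lemma}; the only real content here is the positional bookkeeping that $\Pi_{r_0,s}$ factors through $\widetilde{\Phi}=\Phi_{L_s+k_{s+1}}$. Provided this is isolated cleanly, the two statements reduce to counting complete fibers.
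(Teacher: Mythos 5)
Your proof is correct, and it rests on exactly the same two ingredients as the paper's argument: Lemma \ref{multi-lemma} (which, via $\Phi_{L_{s+1}}(n)=(\Phi_{L_s+k_{s+1}}(n),\Pi_{s,s+1}(n))$ and Lemma \ref{distri-esti}(ii), is precisely your statement that $\Pi_{s,s+1}$ is a bijection from each $\Phi_{L_s+k_{s+1}}$-fiber of $I$ onto $Y_{s,s+1}$) together with the positional observation that $\Phi_{L_s+k_{s+1}}$ determines $\Pi_{r_0,s}$. The only difference is presentational: the paper constructs an injection between the $(x,y)$- and $(x,z)$-preimages and invokes symmetry, while you count one element per complete fiber, which as a small bonus gives the explicit value of the common cardinality (recovered in the paper only in Corollary \ref{3.2-key-lem}).
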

\begin{proof}

(1). It suffices to show that 
\begin{equation}\label{le}
    \# \left( \Pi_{ r_0, s + 1 }^{ -1 } \{ (x,y)) \} \cap I \right)  \le \# \left( \Pi_{ r_0, s + 1 }^{ -1 }\{ (x,z) \}  \cap I \right). 
\end{equation}
By the symmetry of $y$ and $z$, the above is indeed an equality, which will therefore show (\ref{x-y-z}). Take $ n_1 \in \Pi_{ r_0, s + 1 }^{ -1 } \{ (x,y) \} \cap I $. Note that 
$$ \Pi_{ r_0, s + 1 } ( n ) = \left( \Pi_{ r_0, s }( n ), \Pi_{ s, s + 1 }( n ) \right) . $$
It follows that $  \Pi_{ r_0, s }( n_1 )  = x $ and $ \Pi_{ s, s+1 } ( n_{1} ) = y $. By Lemma \ref{multi-lemma}, there exists unique $ n_2 \in I $ such that 
\begin{equation}\label{n_2}
    \Phi_{ L_s + k_{s+1} } ( n_{2} ) = \Phi_{ L_s + k_{s+1} } ( n_{1} ), ~ \Pi_{ s, s + 1 } ( n_2 ) = z.
\end{equation}
From the definition of $\Phi_{L_s+k_{s+1}}$ in (\ref{eq-Phi}), it contains the coordinates corresponding to $\Pi_{r_0,s}$.
We have $ \Pi_{ r_0, s }( n_2 ) = \Pi_{ r_0, s }( n_1 ) = x $. In addition, note that $ \Pi_{ s, s + 1 } ( n_2 )  = z $. Hence, $ n_2 \in \Pi_{ r_0, s + 1 }^{ -1 } \{ (x,z)\}  \cap I $. In order to show (\ref{le}),
it suffices to show that the map $n_1\mapsto n_2$ is an injective map, i.e. if $n_1 \neq n_{1}' $, then the corresponding $ n_2 \neq n_{2}' $.

Suppose $ n_1 \neq n_{1}' $. By Lemma \ref{distri-esti} (ii), it follows 
\begin{equation}\label{n_1_neq_n_1'}
    \Phi_{ L_{s+1}  } ( n_{1} ) \neq \Phi_{ L_{s+1} } ( n_{1}' ).
\end{equation}   
Note that for $ n \in \mathbb{N} $, 
\begin{equation} \label{eq-Phi-L_s}
\Phi_{ L_{s+1}  } ( n ) = ( \Phi_{ L_s + k_{s+1} } ( n ), \Pi_{ s, s + 1 } ( n ) ). \end{equation}
As $ \Pi_{ s, s+1 } ( n_{1} ) = \Pi_{ s, s+1 } ( n_{1}' ) = y $ since $n_1,n_1'\in \Pi_{r_0,s+1}^{-1}\{(x,y)\}$, combining this with (\ref{n_1_neq_n_1'}), it follows that $ \Phi_{ L_s + k_{s+1} } ( n_{1} ) \neq \Phi_{ L_s + k_{s+1} } ( n_{1}' ) $. Then by (\ref{n_2}), 
$$ \Phi_{ L_s + k_{s+1} } ( n_{2} ) \neq \Phi_{ L_s + k_{s+1} } ( n_{2}' ), $$
which implies that
$$ \Phi_{ L_{s+1}  } ( n_{2} ) \neq \Phi_{ L_{s+1} } ( n_{2}' ). $$
By Lemma \ref{distri-esti} (ii), it follows $ n_2 \neq n_{2}' $. Therefore, the inequality (\ref{le}) holds. This completes the proof.  

\smallskip

(2). It is essentially the same proof. By symmetry, it suffices to show that \begin{equation}\label{le{}}
    \# \left( \Pi_{ r_0, r_0 + 1 }^{ -1 } \{  y  \}  \cap I \right)  \le \# \left( \Pi_{ r_0, r_0 + 1 }^{ -1 } \{  z  \}  \cap I \right). 
\end{equation}
We follow the same argument for (1). Take $ n_1 \in \Pi_{ r_0, r_0 + 1 }^{ -1 } \{  y  \}  \cap I $. By Lemma \ref{multi-lemma}, there exist unique $ n_2 \in I $ such that 
\begin{equation}
    \Phi_{ L_{r_0+1}}  ( n_{2} ) = \Phi_{ L_{r_0} + k_{r_0+1} } ( n_{1} ), ~ \Pi_{ r_0, r_0 + 1 } ( n_2 ) = z.  
\end{equation}
We just need to show that $n_1\mapsto n_2$ is injective. Following the same argument and replacing (\ref{eq-Phi-L_s}) by
$$ \Phi_{ L_{ r_0 + 1}  } ( n ) = ( \Phi_{ L_{r_0} + k_{ r_0 + 1 } } ( n ), \Pi_{ r_0, r_0 + 1 } ( n ) ). $$
The rest of the proof will remain the same. 
\end{proof}

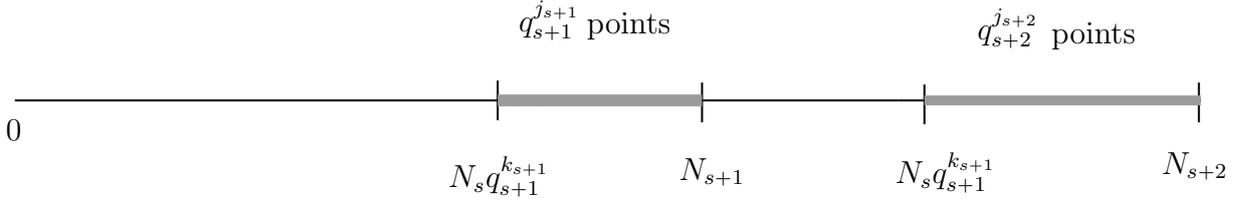
\begin{figure}[h]
    \centering

\tikzset{every picture/.style={line width=0.75pt}} 

\begin{tikzpicture}[x=0.75pt,y=0.75pt,yscale=-1,xscale=1]

\draw    (6,130) -- (447,130) ;
\draw    (247,120) -- (247,140) ;
\draw    (349,121) -- (349,141) ;
\draw  [line width=3] [line join = round][line cap = round] (258,130) .. controls (258,130.33) and (258,130.67) .. (258,131) ;
\draw  [line width=3] [line join = round][line cap = round] (8,130) .. controls (8,130) and (8,130) .. (8,130) ;
\draw  [line width=3] [line join = round][line cap = round] (267,130) .. controls (267,130.33) and (267,130.67) .. (267,131) ;
\draw    (447,130) -- (598,130) ;
\draw    (460,122) -- (460,142) ;
\draw    (597,121) -- (597,141) ;
\draw [color={rgb, 255:red, 155; green, 155; blue, 155 }  ,draw opacity=1 ][line width=4.5]    (349,130) -- (247,130) ;
\draw [color={rgb, 255:red, 155; green, 155; blue, 155 }  ,draw opacity=1 ][line width=3.75]    (460,130) -- (598,130) ;

\draw (0,138) node [anchor=north west][inner sep=0.75pt]   [align=left] {0};
\draw (221,156.4) node [anchor=north west][inner sep=0.75pt]    {$N_{s} q_{s+1}^{k_{s+1}}$};
\draw (336,157.4) node [anchor=north west][inner sep=0.75pt]    {$N_{s+1}$};
\draw (256,78.4) node [anchor=north west][inner sep=0.75pt]    {$q_{s+1}^{j_{s+1}}$};
\draw (290,84) node [anchor=north west][inner sep=0.75pt]   [align=left] {points};
\draw (444,154.4) node [anchor=north west][inner sep=0.75pt]    {$N_{s} q_{s+1}^{k_{s+1}}$};
\draw (577,154.4) node [anchor=north west][inner sep=0.75pt]    {$N_{s+2}$};
\draw (485,82.4) node [anchor=north west][inner sep=0.75pt]    {$q_{s+2}^{j_{s+2}}$};
\draw (522,89) node [anchor=north west][inner sep=0.75pt]   [align=left] {points};

\end{tikzpicture}
 \caption{Corollary \ref{3.2-key-lem} showed that when $hb^n-hb^m$ is expanded into the digit expansion (\ref{digit-representation-1}), where $n$ runs over an interval of length $\ord_{N_{s+2}/Q}(b)$, all possible digit combinations in $(N_sq_{s+1}^{k_{s+1}}: N_{s+1})$ and $(N_{s+1}q_{s+2}^{k_{s+2}}: N_{s+2})$ (the grey intervals above) are achieved and they all have equal number of preimages. This is also true for all such intervals starting from $r_0$. This leads to the partition in Proposition \ref{well-distributed}. }
    \label{fig:1}
\end{figure}

\begin{cor}\label{3.2-key-lem} Let $ r $ be an integer with $ r \ge r_0 + 1 $. Let $ m $ be an integer with $ m \ge n_0 - 1 . $ Let $ s $ be an integer satisfying $ r_0 \le s \le r - 1 $, and let $ I \subset ( m : \infty ) $ be an interval with length $ \ord_{ N_{s+1} / Q }(b).$ Then 
\begin{enumerate}
\item for $ s \ge r_0 + 1 $, for all $ x \in Y_{ r_0, s } $ and $ y \in Y_{ s, s + 1 } $, the following equality holds: 
    \begin{equation}\label{fiber-esti}
        \# \left( \Pi_{ r_0, s }^{ -1 } \{ x \}  \cap I \right) =  q_{ s+1 }^{ j_{ s + 1 } } \cdot \# \left( \Pi_{ r_0, s + 1 }^{ -1 } \{ ( x, y ) \}  \cap I \right),
    \end{equation}
  \item for $ s = r_0 $, for all $ y \in Y_{ r_0, r_0 + 1 } $, we have
    \begin{equation}\label{fiber-esti-s=r_0}
         \#I = \ord_{ N_{r_0 + 1} /Q }(b) = q_{ r_0 + 1 }^{ j_{ r_0 + 1 }} \cdot \# \left( \Pi_{ r_0, r_0 + 1 }^{ -1 } \{ y  \}  \cap I \right).
    \end{equation}
\end{enumerate}
\end{cor}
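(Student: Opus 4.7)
The plan is to derive both equalities as straightforward counting consequences of Lemma \ref{3.2-pre-key-lem}, since the hard work (showing that the various preimage sets have equal cardinality) has already been done there. Essentially, the corollary just reads off the cardinalities by partitioning into equi-cardinal fibers.

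For part (1), I would start from the observation that $\Pi_{r_0,s+1}(n) = (\Pi_{r_0,s}(n), \Pi_{s,s+1}(n))$, which gives the disjoint decomposition
\begin{equation*}
\Pi_{r_0,s}^{-1}\{x\} \cap I \;=\; \bigsqcup_{z \in Y_{s,s+1}} \bigl(\Pi_{r_0,s+1}^{-1}\{(x,z)\} \cap I\bigr).
\end{equation*}
By Lemma \ref{3.2-pre-key-lem}(1), every summand on the right has the same cardinality, namely $\#(\Pi_{r_0,s+1}^{-1}\{(x,y)\} \cap I)$ for the fixed $y$ given in the statement. Since $\#Y_{s,s+1} = q_{s+1}^{j_{s+1}}$, summing over $z \in Y_{s,s+1}$ immediately yields (\ref{fiber-esti}).

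For part (2), I would proceed in exactly the same way, except that the role of the fiber over $x$ is played by the whole interval $I$. The identity $\#I = \ord_{N_{r_0+1}/Q}(b)$ is part of the hypothesis, and the partition
\begin{equation*}
I \;=\; \bigsqcup_{z \in Y_{r_0,r_0+1}} \bigl(\Pi_{r_0,r_0+1}^{-1}\{z\} \cap I\bigr)
\end{equation*}
combined with Lemma \ref{3.2-pre-key-lem}(2) (equal cardinality of all such fibers) and $\#Y_{r_0,r_0+1} = q_{r_0+1}^{j_{r_0+1}}$ gives (\ref{fiber-esti-s=r_0}) at once.

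There is no genuine obstacle here; the only thing to double-check is that Lemma \ref{3.2-pre-key-lem}(1) indeed applies to every pair $(x,z) \in Y_{r_0,s} \times Y_{s,s+1}$ that appears in the partition, rather than only to a priori nonempty fibers. This is fine: the statement of Lemma \ref{3.2-pre-key-lem}(1) is for all $x \in Y_{r_0,s}$ and $y,z \in Y_{s,s+1}$ without any nonemptiness assumption, so if one fiber is empty then all are, and the identity (\ref{fiber-esti}) reduces to $0 = q_{s+1}^{j_{s+1}} \cdot 0$. A fully symmetric remark handles part (2). Thus the proof of the corollary is essentially two lines once Lemma \ref{3.2-pre-key-lem} is in hand.
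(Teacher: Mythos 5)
Your proof is correct and is essentially the paper's own argument: both decompose $\Pi_{r_0,s}^{-1}\{x\}\cap I$ (respectively $I$ itself) into the disjoint fibers of $\Pi_{r_0,s+1}^{-1}$ (respectively $\Pi_{r_0,r_0+1}^{-1}$), invoke Lemma \ref{3.2-pre-key-lem} to see all fibers have equal cardinality, and multiply by $\#Y_{s,s+1}=q_{s+1}^{j_{s+1}}$. Nothing further is needed.
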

\begin{proof}
(1). When $ s \ge r_0 + 1 $, for $ x \in Y_{ r_0, s } $, note that $ \Pi_{ r_0, s }^{ -1 } \{ x \} \cap I$ is the disjoint union of $ \Pi_{ r_0, s + 1 }^{ -1 } \{ ( x, y ) \}  \cap I $ over $ y \in Y_{ s, s + 1 } $, we have
\begin{equation*}
 \# \left( \Pi_{ r_0, s }^{ -1 } \{ x \}  \cap I \right)  = \sum_{ y \in Y_{ s, s + 1 } } \# \left( \Pi_{ r_0, s + 1 }^{ -1 } \{ ( x, y ) \}  \cap I \right).
\end{equation*}
By (\ref{x-y-z}) in Lemma \ref{3.2-pre-key-lem}, the cardinality in the sum are all the same for all $ y \in Y_{ s, s + 1 } $, and $\#Y_{s,s+1} = q_{s+1}^{j_{s+1}}$. This  establishes (\ref{fiber-esti}). 

(2). Since the interval  $ I $ is the disjoint union of $ \Pi_{ r_0, r_0 + 1 }^{ - 1 }  \{ y \}  \cap I $ over $ y \in Y_{ r_0, r_0 + 1 } $, we have
\begin{equation}\label{cardi_I}
         \#  I  = \sum_{ y \in Y_{ r_0 , r_0 + 1 } } \# \left( \Pi_{ r_0, r_0 + 1 }^{ -1 } \{ y \}  \cap I \right).
\end{equation}
By (\ref{x-y}) in Lemma \ref{3.2-pre-key-lem}, each set in the sum shares the same cardinality, then (\ref{fiber-esti-s=r_0}) follows. 
\end{proof}

Using Corollary \ref{3.2-key-lem}, we are ready to establish our main result in this section.

\begin{proof}[Proof of Proposition \ref{3.2-number-count}]\label{equal-cardi'}
Recall that $ I $ is an interval of length $ \ord_{ N_r / Q }(b) $ contained in $ ( m : \infty ) $. It suffices to claim the following:

\smallskip
{\bf Claim: } For all $ y \in Y_{ r_0, r } $,
\begin{equation}\label{fiber-esti1}
    \# \left( \Pi^{ - 1 }_{ r_0, r }  \{ y \}  \cap I \right) =  J: =  \ord_{ N_r / Q }(b) \cdot  q_{ r_0 + 1 }^{ - j_{ r_0 + 1 } } \cdots q_{r}^{ - j_r } . 
\end{equation}

\smallskip

 Suppose the  claim holds. We collect one element from each $\Pi^{ - 1 }_{ r_0, r }  \{ y \} \cap I $ to form $S_1,\cdots, S_J$. Hence, the interval $ I $ can be divided into $ J $ disjoint subsets $ S_1, \cdots, S_J $, where each $ S_k $ has cardinality $ q_{ r_0 + 1 }^{ j_{ r_0 + 1 } } \cdots q_{r}^{ j_r } $ for $ 1 \le k \le J $. In addition, from our definition of $S_k$, we have $ \Pi_{ r_0, r }( S_k ) = Y_{ r_0, r } $. It follows that $ S_k $ is well-distributed with respect to  $ \Pi_{ r_0, r } $ for all $ 1 \le k \le J $. 

 We will prove the claim (\ref{fiber-esti1}) by induction on $r$.  When $ r = r_0 + 1 $. This has been already proved  in Corollary \ref{3.2-key-lem} (See equality (\ref{fiber-esti-s=r_0})). Suppose that  (\ref{fiber-esti1}) holds for $ r = t $, where $ r_0 + 1 \le t \le r - 1 $. We will show that (\ref{fiber-esti1}) also holds for $ r = t + 1 $.

Let $ I $ be an interval of length $ \ord_{ N_{ t + 1 } / Q }(b) $ contained in $ ( m : \infty ) $. Let $ y \in Y_{ r_0, t + 1 } $. Then there exist $ y_1 \in Y_{ r_0, t  } $ and $ y_2 \in Y_{ t, t+1 } $ such that $ y = ( y_1, y_2 ) $. Note that the interval $ I $ can be divided into $ M $ disjoint intervals $ I_1, \cdots, I_M $, where $ M = \ord_{ N_{ t + 1 } / Q }(b) / \ord_{ N_t / Q }(b) $ 
(By Lemma \ref{ordern|m} (i), $ M $ is an integer). We require that for all $ 1 \le k \le M $, the interval $ I_k $ has length $ \ord_{ N_t / Q }(b) $. By the induction hypothesis, for all $ 1 \le k \le M $, 
\begin{equation}\label{cardi_Ik}
    \# \left( \Pi^{ - 1 }_{ r_0, t }  \{ y_1 \}  \cap I_k \right) =  \ord_{ N_t / Q }(b) \cdot q_{ r_0 + 1 }^{ - j_{ r_0 + 1 } } \cdots q_{t}^{ - j_t }. 
\end{equation}
Since $ I $ is the disjoint union of $ I_1, \cdots, I_M $, it follows 
\begin{equation}\label{int-div}
    \begin{aligned}
         \# \left( \Pi^{ - 1 }_{ r_0, t }  \{ y_1 \}  \cap I \right) & = \sum_{ k = 1 }^{ M }  \# \left( \Pi^{ - 1 }_{ r_0, t }  \{ y_1 \}  \cap I_k \right) \\
         & = M \cdot  \ord_{ N_t / Q }(b) \cdot q_{ r_0 + 1 }^{ - j_{ r_0 + 1 } } \cdots q_{t}^{ - j_t } \quad (\textup{by }(\ref{cardi_Ik})) \\
         & = \ord_{ N_{ t + 1 } / Q }(b) \cdot q_{ r_0 + 1 }^{ - j_{ r_0 + 1 } } \cdots q_{t}^{ - j_t } \quad (\textup{by the definition of }M).
    \end{aligned}
\end{equation}
Next, applying the equality (\ref{fiber-esti}) in Corollary \ref{3.2-key-lem} with $ s = t $, we have 
\begin{equation}\label{lem4.7-eq1}
    \# \left( \Pi_{ r_0, t }^{ -1 } \{ y_1 \}  \cap I \right) = q_{ t + 1 }^{ j_{ t + 1 } } \cdot \# \left( \Pi_{ r_0, t + 1 }^{ -1 } \{ ( y_1 , y_2 ) \}  \cap I \right).
\end{equation}
Combining (\ref{int-div}) and (\ref{lem4.7-eq1}), it follows 
$$ \# \left( \Pi_{ r_0, t + 1 }^{ -1 } \{ ( y_1 , y_2 ) \}  \cap I \right) = \ord_{ N_{ t + 1 } / Q }(b) \cdot q_{ r_0 + 1 }^{ - j_{ r_0 + 1 } } \cdots q_{t+1}^{ - j_{t+1} }, $$
which finishes the induction step and hence the proof of the claim. The proof of the proposition is now complete. 
\end{proof}

\section{Proof of Theorem \ref{thm-1.5} (III): Completion of the proof}\label{sec-4}
We will complete the proof of Theorem \ref{thm-1.5} in this section. Without explicit mentioning, we will continue using all constants and notations defined in Section \ref{add_sec}. Let $m\ge 1$. Recall in Lemma \ref{cos-esti}, if for some $n>m$, we have 
  \begin{equation}\label{5.1eq-1}
              \left\lfloor \dfrac{q_{s+1}}{3} \right\rfloor \le  \mathsf{d}_{ L_s + j  }(h(b^n-b^m)) \le  2 \left\lfloor \dfrac{q_{s+1}}{3} \right\rfloor , 
        \end{equation}
then the cosine factor will be at most $\gamma<1$. If we have $w$ such digits, the Fourier transform $|\widehat{\mu}(hb^n-hb^m)|$ will be at most $\gamma^w$. Because of this, we first estimate the number of elements with such digit distribution in a well-distributed set.



   \begin{lem}\label{cardina-esti}
  Let $m\ge 1$, $ r \ge r_0 + 1 $ and let $u = \sum_{ i = r_0 + 1 }^{ r } j_i  $.  Suppose that $ \Lambda$  is a well-distributed set associated with $\Pi_{r_0,r}$. For each $ 0 \le  k  \le u  $, we define 
$$
B_k = \{n\in\Lambda: \Pi_{r_0,r}(n) \ \textup{contains exactly } k \ \textup{entries such that} \ (\ref{5.1eq-1}) \textup{ holds}\}
.$$
    Then we have
       \begin{equation}\label{upper_Bk}
           \# B_k \le \binom{ u }{ k } q_{r_{0}+1}^{ j_{ r_0 + 1 } } \cdots q_{r}^{ j_r }  \left( \dfrac{1}{2}  \right)^{ k } \left( \dfrac{2}{3} \right)^{ u - k } = : C(k)
       \end{equation}
       for all $ 0 \le k \le u $. If $ u \ge 6000 $, then there exist constants $ 0 < \alpha \le 0.998 $ and $ \widetilde{C} > 0 $, both of which are independent of $ r $ and $ u $, such that 
       $$  C\left( \left\lfloor  \dfrac{u}{6} \right\rfloor  \right) \le \widetilde{C} \cdot q_{r_{0}+1}^{ j_{ r_0 + 1 } } \cdots q_{r}^{ j_r } \cdot u \alpha^{u}. $$
       
   \end{lem}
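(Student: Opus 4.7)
The estimate (\ref{upper_Bk}) will follow from a direct counting argument that leverages the well-distributed property of $\Lambda$. Since $\Pi_{r_0,r}$ restricts to a bijection from $\Lambda$ onto $Y_{r_0,r}$, $\#B_k$ equals the number of tuples $y\in Y_{r_0,r}$ with exactly $k$ of their $u$ coordinates falling in the respective middle-third ranges $[\lfloor q_s/3\rfloor : 2\lfloor q_s/3\rfloor]$. My plan is to choose first which $k$ of the $u$ coordinate positions are ``middle'' (contributing a factor $\binom{u}{k}$), and then count independently the admissible values at each coordinate. At a coordinate of index $s$, the middle-third range contains $\lfloor q_s/3\rfloor+1$ of the $q_s$ possible values, a fraction bounded above by $1/3+1/q_s \le 1/3+1/7 < 1/2$ because $q_s\ge q_1\ge 7$; the complementary fraction is bounded above by $2/3$ by the same estimate. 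Factoring out the total cardinality $q_{r_0+1}^{j_{r_0+1}}\cdots q_r^{j_r}$ of $Y_{r_0,r}$ and applying these uniform fractional bounds across the $k$ middle and $u-k$ non-middle positions yields (\ref{upper_Bk}) at once.

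For the second estimate, I would substitute $k=\lfloor u/6\rfloor$ in $C(k)$ and apply the classical entropy bound $\binom{u}{k}\le 2^{u H(k/u)}$, where $H(p)=-p\log_2 p-(1-p)\log_2(1-p)$ is the binary entropy. A direct computation identifies the asymptotic base as
\[
\beta \;=\; 2^{H(1/6)}\cdot (1/2)^{1/6}\cdot (2/3)^{5/6}\;=\;6^{1/6}(6/5)^{5/6}\cdot 2^{-1/6}\cdot (2/3)^{5/6},
\]
which evaluates numerically to $\beta \approx 0.997$. By continuity of $H$ at $1/6$, for all $u$ sufficiently large (large enough that $\lfloor u/6\rfloor/u$ is close to $1/6$) one has $\binom{u}{\lfloor u/6\rfloor}\le 2^{u(H(1/6)+\varepsilon)}$ for any prescribed small $\varepsilon>0$. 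Combining with (\ref{upper_Bk}) and absorbing the boundary factors $(1/2)^{\lfloor u/6\rfloor - u/6}$ and $(2/3)^{u-\lfloor u/6\rfloor - 5u/6}$ into a constant, this produces a bound of the form $C(\lfloor u/6\rfloor)\le \widetilde{C} \cdot q_{r_0+1}^{j_{r_0+1}}\cdots q_r^{j_r}\cdot \alpha^u$ with $\alpha = \beta\cdot 2^\varepsilon \le 0.998$. The extra factor of $u$ in the statement is comfortable slack to absorb any polynomial (in fact subexponential) corrections from the Stirling/entropy estimate, and the threshold $u\ge 6000$ quantifies how close $\lfloor u/6\rfloor/u$ must be to $1/6$ for the asymptotic $\beta$ to dominate and keep $\alpha$ below $0.998$.

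The main obstacle is purely numerical: verifying that the explicit constant $\beta = 2^{H(1/6)}(1/2)^{1/6}(2/3)^{5/6}$ really does fall below $0.998$. The choice $k=\lfloor u/6\rfloor$ (as opposed to, say, the binomial mode near $u/2$) is essential, because near the mode the combinatorial weight $\binom{u}{k}$ is too large to be offset by the $(1/2)^k(2/3)^{u-k}$ decay; by pushing $k$ smaller the binomial cost drops faster than the geometric gain is lost, until the overall base dips below $1$. Once this balancing is confirmed, the remainder of the argument is routine.
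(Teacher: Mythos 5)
Your proof of the first inequality (\ref{upper_Bk}) is exactly the paper's argument: pass from $\Lambda$ to $Y_{r_0,r}$ via the bijection $\Pi_{r_0,r}$, choose the $k$ ``middle'' positions to account for $\binom{u}{k}$, and bound the per-coordinate fractions by $1/2$ and $2/3$ using $q_s\ge 7$.

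For the second inequality you take a genuinely different (and, in one respect, cleaner) route. The paper substitutes $k=\lfloor u/6\rfloor$ and bounds $\binom{u}{\lfloor u/6\rfloor}$ by Stirling's formula, using the crude inequality $u/6-1 \ge (999/6000)u$, which is precisely what forces the threshold $u\ge 6000$ and produces the extra factor of $u$ in the bound. You instead use the binary-entropy bound $\binom{u}{k}\le 2^{uH(k/u)}$, which is exact for every $u$. Your asymptotic base $\beta = 2^{H(1/6)}(1/2)^{1/6}(2/3)^{5/6}=3^{1/6}(4/5)^{5/6}\approx 0.9972$ agrees with the base hiding inside the paper's $\alpha$, and the two proofs are arithmetically parallel. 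The one place your write-up is suboptimal (but not wrong) is the appeal to \emph{continuity} of $H$ at $1/6$ to get $H(\lfloor u/6\rfloor/u)\le H(1/6)+\varepsilon$: since $\lfloor u/6\rfloor/u\le 1/6<1/2$ and $H$ is increasing on $[0,1/2]$, you have $H(\lfloor u/6\rfloor/u)\le H(1/6)$ outright for \emph{every} $u$, which eliminates the $\varepsilon$ and the factor $2^{\varepsilon}$. Combined with the boundary absorption $(1/2)^{\lfloor u/6\rfloor - u/6}(2/3)^{u/6-\lfloor u/6\rfloor}\le 2$, you get $C(\lfloor u/6\rfloor)\le 2\,q_{r_0+1}^{j_{r_0+1}}\cdots q_r^{j_r}\,\beta^u$ with $\beta<0.998$, valid for all $u$, no threshold and no factor of $u$ needed. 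Consequently your final sentence attributing the role of $u\ge 6000$ to making $\lfloor u/6\rfloor/u$ ``close to $1/6$'' is a misattribution in your own argument (it is needed only in the paper's Stirling route); and the ``binomial mode near $u/2$'' comment is slightly off, since the mode of $C(k)$ is near $3u/7$, though that does not affect the point you are making.
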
 
   \begin{proof}
       Let $ s $ be an integer with $ r_0 \le s \le r - 1 $. Since we have chosen $q_1\ge 7$, we have 
        \begin{equation}\label{4.1eq-3}
            \dfrac{ \left\lfloor \dfrac{q_{s+1}}{3} \right\rfloor  + 1 }{ q_{ s+1 } } \le \dfrac{1}{3} + \dfrac{1}{ q_{s+1} } \le \dfrac{1}{2},
        \end{equation}
        and
        \begin{equation}\label{4.1eq-4}
            \dfrac{ q_{s+1} - \left\lfloor \dfrac{q_{s+1}}{3} \right\rfloor - 1 }{ q_{s+1} } \le \dfrac{ q_{s+1} - \dfrac{q_{s+1}}{3} }{q_{s+1}} = \dfrac{2}{3}.
        \end{equation}  
        
        To find an upper bound for the cardinality of the set $ B_k $, we first recall that the definition 
        $$
        Y_{r_0,r} = \{ 0,1, \cdots, q_{ r_0+ 1 } - 1 \}^{ j_{ r_0 + 1 } } \times \cdots \times \{ 0, 1, \cdots, q_r - 1 \}^{ j_r }.
        $$
         $Y_{r_0,r}$ consists of $u$ dimensional vectors, denoted by $(y_1,\cdots, y_u)$. 

         \smallskip
         
         For each coordinates $y_i$, there corresponds a unique pair $(s,j)$ such that the digit $y_i = {\mathsf d}_{L_s+j} (h(b^n-b^m))$ via the mapping $\Pi_{r_0,r}$.           Let us fix $k$ of the coordinates in $Y_{r_0,r}$ and denote it by $Y$.  
         Define
           $$
           B_k (Y) = \{n\in B_k: (\ref{5.1eq-1}) \  \textup{holds exactly at coordinates from} \ Y \}.
           $$
           If $ y_i $ is a coordinate in $ Y_{r_0, r} $, then there exists unique $s\in \{r_0,\cdots, r-1\}$ such that $y_i$ takes values in $\{0,1,\cdots, q_{s+1}-1\}$. If (\ref{5.1eq-1}) holds at $y_i\in Y$, there are exactly  $\left\lfloor\dfrac{q_{s +1}}{3} \right\rfloor  + 1$ many such digits to choose from. While if $y_i \notin Y$, there are exactly $q_{s+1}-\left(\left\lfloor\dfrac{q_{s +1}}{3} \right\rfloor  + 1\right)$  many digits to choose from.    By the property that $\Lambda$ is a well-distributed set,  $\Pi_{r_0,r}$ is a bijection. Therefore,   
        \begin{equation}
            \begin{aligned}\label{4.1-single-esti}
            & \# B_{k}(Y)  \\
            = & \prod_{y_i \in Y} \left( \left\lfloor \dfrac{q_{s +1}}{3} \right\rfloor  + 1 \right) \cdot \prod _{ y_i  \not\in Y } \left( q_{s+1} - \left\lfloor \dfrac{q_{s+1}}{3} \right\rfloor - 1 \right) \\
            = & q_{r_{0}+1}^{ j_{ r_0 + 1 } } \cdots q_{r}^{ j_r } \prod_{y_i \in Y} \left( \dfrac{ \left\lfloor \dfrac{q_{s +1}}{3} \right\rfloor  + 1 }{ q_{ s +1 } } \right) \cdot \prod _{ y_i \not\in Y } \left( \dfrac{ q_{s+1} - \left\lfloor \dfrac{q_{s+1}}{3} \right\rfloor - 1 }{ q_{s+1} } \right) \\
            \le & q_{r_{0}+1}^{ j_{ r_0 + 1 } } \cdots q_{r}^{ j_r }  \left( \dfrac{1}{2} \right)^{ k } \left( \dfrac{2}{3} \right)^{ u - k } \quad (\textup{by } (\ref{4.1eq-3}) \textup{ and } (\ref{4.1eq-4})).
        \end{aligned}
        \end{equation}
           Note that $ B_k $ is the disjoint union of $ B_{k}\left( Y \right) $ over all possible pairs $Y$ with exactly $\#Y =k$. Since there are $ \binom{u}{k} $ distinct ways to choose these $ k $ coordinates, we can use (\ref{4.1-single-esti}) to derive:
        \begin{equation}
            \begin{aligned}
                \# B_k & = \sum_{ Y: \#Y=k} \# B_{k}\left(Y \right)  \\
                & \le \binom{ u }{ k } q_{r_{0}+1}^{ j_{ r_0 + 1 } } \cdots q_{r}^{ j_r }  \left( \dfrac{1}{2}  \right)^{ k } \left( \dfrac{2}{3} \right)^{ u - k }. 
            \end{aligned}
        \end{equation}
This shows that (\ref{upper_Bk}) holds. To prove the second inequality, by a direct calculation, 
        $$C( k  ) < C ( k+1 ) ~\Longleftrightarrow~  k < \dfrac{ 3 u - 4 }{ 7  }. $$ 
     In particular, it is enough for us to consider $C(k)$ on $k<\left\lfloor u/6 \right\rfloor$ where it is increasing. By the Stirling's formula, there exists a constant $ C_1 > 0 $ that is independent of $ r $ and $ u $ such that 
        \begin{equation}\label{stirling}
            \begin{aligned}
                \binom{u}{\left\lfloor \dfrac{u}{6}  \right\rfloor} & = \dfrac{ u! } { \left( \left\lfloor \dfrac{u}{6} \right\rfloor \right) ! \left( u - \left\lfloor \dfrac{u}{6} \right\rfloor \right) ! } \\
                & \le C_1 \cdot \dfrac{u^u}{ \left( \left\lfloor \dfrac{u}{6} \right\rfloor \right)^{ \left\lfloor \dfrac{u}{6} \right\rfloor } \left(  u - \left\lfloor \dfrac{u}{6} \right\rfloor \right)^{ u - \left\lfloor \dfrac{u}{6} \right\rfloor } }.
            \end{aligned}
        \end{equation}

        Since $ u \ge 6000 $, we have $u/6-1\ge (999/6000) u$.  Combining this with (\ref{stirling}), we have
        \begin{equation}\label{stirling-2}
            \begin{aligned}
                 \binom{u}{ \left\lfloor \dfrac{u}{6} \right\rfloor } &  \le C_1 \cdot  \dfrac{ u^u }{ \left( \dfrac{u}{6} - 1 \right)^{ \left( \dfrac{u}{6} - 1 \right) }  \left( \dfrac{5u}{6} \right)^{ \left( \dfrac{5u}{6} \right) } }  \\
                & \le C_1 \cdot \frac{ u^{u}  }{ \left( 999u/6000 \right)^{ u/6 } \left( 5u/6 \right)^{ \left( 5u/6 \right) }  } \cdot \left( \dfrac{u}{6} - 1 \right)   \\
                & \le C_1 \cdot u \cdot \left( 6000/999 \right)^{ u/6 } (  6/5 )^{ 5u/6 }.
            \end{aligned}
        \end{equation}
         Hence, we  establish the following upper bound for $ C\left( \left\lfloor  \dfrac{u}{6} \right\rfloor  \right) $,
        \begin{equation}\label{5.9}
            \begin{aligned}
                C\left( \left\lfloor  \dfrac{u}{6} \right\rfloor  \right) & \le  \binom{u}{ \left\lfloor \dfrac{u}{6} \right\rfloor } \cdot q_{r_{0}+1}^{ j_{ r_0 + 1 } } \cdots q_{r}^{ j_r } \cdot  \left( \dfrac{1}{2}  \right)^{   \dfrac{u}{6} - 1 } \left( \dfrac{2}{3} \right)^{ u - \dfrac{u}{6}  } \\
                & \le \widetilde{C} \cdot q_{r_{0}+1}^{ j_{ r_0 + 1 } } \cdots q_{r}^{ j_r } \cdot  u \alpha^{u} \quad (\textup{by (\ref{stirling-2})}),
            \end{aligned}
        \end{equation}
        where $ \alpha = (6000/999)^{ 1/6 } ( 6/5 )^{ 5/6 } (1/2)^{1/6} (2/3)^{5/6} \le 0.998 $ and $ \widetilde{C} = 2 C_1 $. 
   \end{proof}

The following lemma obtains the estimate of the $\widehat{\mu}$ over a well-distributed set.

\begin{pro}\label{round-esti}
         Let $ \mu $ be the measure defined in (\ref{eg}). Then there exist two positive constants $A$ and $B$, and a large integer $r_1$ such that for all $ r \ge  r_1  $, all 
         \begin{equation}\label{eq-m-lemma5.2}
          m \ge n_0 - 1, 
         \end{equation}
         and all \textit{well-distributed} set $\Lambda\subset ( m : \infty ) $ , the following inequality holds:
        \begin{equation}\label{key-ineq-pre}
        \sum_{ n\in\Lambda} \vert \widehat{\mu}(hb^{n} - hb^{m} ) \vert \le A \cdot q_{r_{0}+1}^{ j_{ r_0 + 1 } } \cdots q_{r}^{ j_r } \cdot e^{ - B \cdot \sum_{ i = r_0 + 1 }^{ r } j_i }.
    \end{equation}
    Moreover, for every interval $ I \subset (m : \infty ) $  with length $ \ord_{ N_r / Q }(b) $ and for all $ r \ge  r_1  $, 
     \begin{equation}\label{eq_I}
         \sum_{ n \in I } \vert \widehat{\mu}(hb^{n} - hb^{m} ) \vert \le A \cdot \ord_{ N_r / Q }(b) \cdot e^{ - B \cdot \sum_{ i = r_0 + 1 }^{ r } j_i }.
     \end{equation}
  \end{pro}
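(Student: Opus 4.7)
The plan is to combine the trigonometric bound of Lemma~\ref{cos-esti} with the cardinality estimate of Lemma~\ref{cardina-esti}. First, I would fix $u := \sum_{i=r_0+1}^{r} j_i$ and decompose the well-distributed set as $\Lambda = \bigsqcup_{k=0}^{u} B_k$, where $B_k$ is the set defined in Lemma~\ref{cardina-esti}. Every factor of the infinite product (\ref{eq4.1-11}) expressing $|\widehat{\mu}(hb^n - hb^m)|$ has modulus at most $1$; moreover, if $n \in B_k$ then at least $k$ of these factors (those at positions $(s,j)$ with $r_0 \le s \le r-1$ and $k_{s+1} \le j \le \ell_{s+1}-1$ for which condition (\ref{5.1eq-1}) holds) are bounded by $\gamma$, by Lemma~\ref{cos-esti}. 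Hence $|\widehat{\mu}(hb^n - hb^m)| \le \gamma^{k}$ for every $n \in B_k$, so $\sum_{n \in \Lambda} |\widehat{\mu}(hb^n - hb^m)| \le \sum_{k=0}^{u} \#B_k \cdot \gamma^{k}$.

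The next step is to split this sum at the threshold $k_0 := \lfloor u/6 \rfloor$. For $k \le k_0$, since $k_0 \le (3u-4)/7$ once $u \ge 6000$, the quantity $C(k)$ from Lemma~\ref{cardina-esti} is non-decreasing on $\{0,\ldots,k_0\}$, so $\#B_k \le C(k) \le C(k_0)$; combined with $\gamma^{k} \le 1$ and the refined bound for $C(k_0)$ in Lemma~\ref{cardina-esti}, this produces
$$
\sum_{k=0}^{k_0} \#B_k\, \gamma^{k} \le (k_0+1)\,\widetilde{C}\, q_{r_0+1}^{j_{r_0+1}} \cdots q_r^{j_r}\, u\,\alpha^{u}.
$$
For $k > k_0$, the trivial bound $\#B_k \le \#\Lambda = q_{r_0+1}^{j_{r_0+1}} \cdots q_r^{j_r}$ together with $\gamma^{k} \le \gamma^{k_0+1}$ yields
$$
\sum_{k > k_0} \#B_k\, \gamma^{k} \le u\, q_{r_0+1}^{j_{r_0+1}} \cdots q_r^{j_r}\, \gamma^{k_0+1}.
$$
Setting $\beta := \max\{\alpha, \gamma^{1/6}\} < 1$, both partial sums are dominated by a constant multiple of $u^{2} \beta^{u} \cdot q_{r_0+1}^{j_{r_0+1}} \cdots q_r^{j_r}$. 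Fixing any $0 < B < -\log\beta$, one has $u^{2}\beta^{u} \le A\, e^{-Bu}$ for some $A > 0$ once $u$ is large enough. Since $j_i \to \infty$ by Lemma~\ref{esti-kr}, choosing $r_1$ so that $\sum_{i=r_0+1}^{r_1} j_i \ge 6000$ (and large enough to absorb the polynomial factor) yields (\ref{key-ineq-pre}) for all $r \ge r_1$.

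For the moreover part, the hypothesis $m \ge n_0 - 1$ lets me apply Proposition~\ref{3.2-number-count}: every interval $I \subset (m:\infty)$ of length $\ord_{N_r/Q}(b)$ admits a disjoint decomposition $I = \Lambda_1 \sqcup \cdots \sqcup \Lambda_J$ with $J = \ord_{N_r/Q}(b) \cdot q_{r_0+1}^{-j_{r_0+1}} \cdots q_r^{-j_r}$, each $\Lambda_\ell$ being well-distributed with respect to $\Pi_{r_0,r}$. Applying (\ref{key-ineq-pre}) to each piece and summing gives
$$
\sum_{n \in I} |\widehat{\mu}(hb^n - hb^m)| \le J \cdot A\, q_{r_0+1}^{j_{r_0+1}} \cdots q_r^{j_r}\, e^{-B u} = A \cdot \ord_{N_r/Q}(b) \cdot e^{-B \sum_{i=r_0+1}^{r} j_i},
$$
which is (\ref{eq_I}). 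The main obstacle is the first step: one must verify that the binomial peak $C(\lfloor u/6\rfloor)$ is already smaller by a genuine exponential factor $\alpha^{u}$ (with $\alpha < 1$) than the trivial size $q_{r_0+1}^{j_{r_0+1}} \cdots q_r^{j_r}$ of $\Lambda$. This is precisely what Lemma~\ref{cardina-esti} supplies, and it hinges on the choice $q_n \ge 7$, which keeps the proportions $(\lfloor q_{s+1}/3\rfloor + 1)/q_{s+1}$ and $(q_{s+1} - \lfloor q_{s+1}/3\rfloor - 1)/q_{s+1}$ safely bounded by $1/2$ and $2/3$. Once this combinatorial gain is secured, the summation splitting and the well-distributed partition do the rest mechanically.
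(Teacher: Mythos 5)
Your proposal is correct and follows essentially the same route as the paper's proof: decompose $\Lambda$ into the sets $B_k$, apply Lemma \ref{cos-esti} to get $|\widehat{\mu}(hb^n-hb^m)|\le\gamma^k$ on $B_k$, split the sum at $\lfloor u/6\rfloor$ using the monotonicity and peak bound of $C(k)$ from Lemma \ref{cardina-esti}, and then deduce the interval estimate by summing over the well-distributed partition of Proposition \ref{3.2-number-count}. The only differences are cosmetic bookkeeping (an extra factor $u$ in the tail sum and absorbing the polynomial factor by choosing $B<-\log\beta$ rather than the paper's $(1.001)^u\ge u^2$ device), which do not affect the result.
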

    \begin{proof}
         Let $ R $ be a large integer such that for all $ x \ge R $, the inequality
        \begin{equation}\label{eq4.1-7}
            (1.001)^{x} \ge x^{2}  
        \end{equation}
        holds. Next, define $ r_1 = r_0  + \max\{ 6000, R \} $.
        Let $ r \ge r_1 $. Let $ m $ be an integer satisfying (\ref{eq-m-lemma5.2}), and let $\Lambda\subset (m: \infty) $ be a well-distributed set.

 For $ 0 \le k \le u $, let $ B_k $ be the set defined in Lemma \ref{cardina-esti}. Since $ r \ge r_1 \ge r_0 + 6000 $, and $ j_i $ is a positive integer for all $ i \ge r_0 $, it follows that $ u = \sum_{ i = r_0 + 1 }^{ r } j_i \ge 6000 $. By Lemma \ref{cardina-esti}, we have $ \# B_k \le C( k ) $ for all $ 0 \le k \le u $, where $ C(k) $ is defined in (\ref{upper_Bk}). Furthermore, $ C(k) $ is increasing when $ k \le \lfloor u/6 \rfloor $, and there exist constants $ 0 < \alpha \le 0.998 $ and $ \widetilde{C} > 0 $, both of which are independent of $ r $ and $ u $, such that 
         \begin{equation}\label{eq4.1-9}
             C\left( \left\lfloor  \dfrac{u}{6} \right\rfloor  \right) \le \widetilde{C} \cdot q_{r_{0}+1}^{ j_{ r_0 + 1 } } \cdots q_{r}^{ j_r } \cdot  u \alpha^{u}. 
         \end{equation}
       
        Since $\Lambda = \bigcup_{k=0}^{u} B_k$ and the union is disjoint, the left hand side of the inequality (\ref{key-ineq-pre}) can be written as 
        \begin{equation}\label{4.1-sum}
             \sum_{k=0}^{u} \sum_{ i \in B_k } \vert \widehat{\mu}(hb^{n_i} - hb^{m} ) \vert . 
        \end{equation}
        We proceed to provide an estimate of this summation (\ref{4.1-sum}). By (\ref{eq4.1-11}), for $ n \in \mathbb{N} $,
        \begin{equation}\label{eq4.1-12}
            \vert \widehat{\mu} ( hb^n - hb^m ) \vert = \prod_{ s=0 }^{ \infty } \prod_{ j=0 }^{ \ell_{s+1} - 1 } \left\vert \mathsf{M_{\D_n}} \left(  \dfrac{ (hb^n - hb^m) }{ N_s q_{s+1}^{j + 1} } \right) \right\vert. 
        \end{equation}
        By combining Lemma \ref{cos-esti} and the definition of $ B_{k}$, for $ n \in B_k $,
        \begin{equation}\label{eq4.1-10}
            \left\vert \widehat{\mu} ( hb^{n} - hb^m ) \right\vert \le \gamma^{k},
        \end{equation}
            where $\gamma$ was defined in Lemma \ref{cos-esti}. Moreover, since $ u = \sum_{ i = r_0 + 1 }^{ r } j_i \ge R $, we have $ (1.001)^{u} \ge u^{2} $ (by (\ref{eq4.1-7})). Combining this with $\alpha\le 0.998$, we have
        \begin{equation}\label{eq4.1-8}
            u^{2} \alpha^{u} \le ( \alpha + 0.001 \alpha  )^{u} \le ( \alpha + 0.001  )^{u} \le  (0.999)^u.
        \end{equation}

        We are going to show the inequality (\ref{key-ineq-pre}). 
        We divide the summation (\ref{4.1-sum}) into two parts: the first over $ k $ from $ 0 $ to $ \left\lfloor u/6 \right\rfloor $, and the second over $ k $ from $ \left\lfloor u/6 \right\rfloor + 1 $ to $ u $. For the first part, we just bound the left hand side of the inequality (\ref{key-ineq-pre}) using $|\widehat{\mu}(\xi)|\le 1$, we see that 
        $$
         \sum_{ n \in B_k } \vert \widehat{\mu}(hb^{n} - hb^{m} ) \vert \le \#B_k \le C(k)\le C\left(\lfloor u/6 \right\rfloor)
        $$
        by the monotonicity of $C(\cdot)$. Hence, 
        \begin{equation}\label{4.1-sum-1st}
            \begin{aligned}
                \sum_{ k = 0 }^{ \left\lfloor u/6 \right\rfloor }  \sum_{ n \in B_k } \vert \widehat{\mu}(hb^{n} - hb^{m} ) \vert  & \le \left( \left\lfloor u/6 \right\rfloor + 1 \right) C( \left\lfloor u/6 \right\rfloor )  \\
                & \le  \widetilde{C} \cdot q_{r_{0}+1}^{ j_{ r_0 + 1 } } \cdots q_{r}^{ j_r } \cdot u^{2} \alpha^{u} \quad (\textup{by (\ref{eq4.1-9}))} \\
                & \le \widetilde{C} \cdot q_{r_{0}+1}^{ j_{ r_0 + 1 } } \cdots q_{r}^{ j_r } \cdot (0.999)^u \quad (\textup{by (\ref{eq4.1-8}))}.
            \end{aligned}
        \end{equation}
        For the second part, by (\ref{eq4.1-10}),
        \begin{equation}\label{4.1-sum-2nd}
        \begin{aligned}
             \sum_{ k > \left\lfloor u/6 \right\rfloor }^{}  \sum_{ n\in B_k } \vert \widehat{\mu}(hb^{n} - hb^{m} ) \vert 
            \le & \sum_{ k > \left\lfloor u/6 \right\rfloor }^{}  \sum_{ n\in B_k } \gamma^{ k  }    \\
            \le & \# \Lambda \cdot \gamma^{ u/6  } \\
         = & q_{r_{0}+1}^{ j_{ r_0 + 1 } } \cdots q_{r}^{ j_r } \cdot \gamma^{ u/6  },
      \end{aligned}
        \end{equation}
        where the last equality holds since the cardinality of $ \Lambda $ is $ q_{r_{0}+1}^{ j_{ r_0 + 1 } } \cdots q_{r}^{ j_r } $.
  Combining the estimations (\ref{4.1-sum-1st}) and (\ref{4.1-sum-2nd}), we conclude that the summation (\ref{4.1-sum}) is bounded above by
        \begin{equation}\label{4.1-eq11}
            A \cdot  q_{r_{0}+1}^{ j_{ r_0 + 1 } } \cdots q_{r}^{ j_r } \cdot e^{ - B u }, 
        \end{equation}
        where $ A = \max\{ \widetilde{C}, 1 \} $ and $ B = - \max\left\{ \ln(0.999), \dfrac{1}{6} \ln\gamma \right\} > 0 $. This shows (\ref{key-ineq-pre}).   

     We now prove (\ref{eq_I}). Applying Proposition \ref{3.2-number-count}, we can divide the interval $ I $ into 
    $$ J = \ord_{ N_r / Q }(b) \cdot q_{r_{0}+1}^{ - j_{ r_0 + 1 } } \cdots q_{r}^{ - j_r } $$ 
    disjoint subsets $ \Lambda_1, \cdots, \Lambda_J $, where $ \Lambda_k $ is \textit{well-distributed} with respect to $ \Pi_{ r_0, r } $ for all $ 1 \le k \le J $. Then the inequality (\ref{key-ineq-pre}) holds for all $ \Lambda_k $, $ 1\le k \le J $. Namely, for all $ 1\le k \le J $ and $ r \ge  r_1  $,  
        \begin{equation}\label{eq4.2-1}
        \sum_{ n\in \Lambda_k  } \vert \widehat{\mu}(hb^{n} - hb^{m} ) \vert \le A \cdot q_{r_{0}+1}^{ j_{ r_0 + 1 } } \cdots q_{r}^{ j_r } \cdot e^{ - B \cdot \sum_{ i = r_0 + 1 }^{ r } j_i }.
    \end{equation}
    Consequently, decomposing $I$ into disjoint union of $\Lambda_1,\cdots, \Lambda_J$,
    \begin{equation}\label{eq4.2-3} 
        \begin{aligned}
            \sum_{ n \in I } \vert \widehat{\mu}(hb^{n} - hb^{m} ) \vert & = \sum_{k=1}^{J} \sum_{ n\in \Lambda_k  } \vert \widehat{\mu}(hb^{n} - hb^{m} ) \vert \\
            & \le J \cdot A \cdot q_{r_{0}+1}^{ j_{ r_0 + 1 } } \cdots q_{r}^{ j_r } \cdot e^{ - B \cdot \sum_{ i = r_0 + 1 }^{ r } j_i } \\
            & = A \cdot \ord_{ N_r / Q }(b) \cdot e^{ - B \cdot \sum_{ i = r_0 + 1 }^{ r } j_i }. 
        \end{aligned}
    \end{equation}
In the last step, we apply the definition of $J$. This completes the proof.
          \end{proof}

     We have the following proposition, in which we use Lemma \ref{round-esti} to obtain the estimate over the whole $(m: N_r)$. 
\begin{pro}\label{key-prop}
   Let $ \mu $ be the measure defined in (\ref{eg}). Let $ r_1, A, B $ be the constants chosen in Lemma \ref{round-esti}. Then for $ r \ge r_1 $ and $ n_0 - 1 \le m < N_r - 1 $, the following inequality holds:
    \begin{equation}\label{key-ineq}
        \sum_{n = m +1 }^{ N_r - 1 } \vert \widehat{\mu}(hb^{n} - hb^{m} ) \vert \le   2 A \cdot N_{r}e^{ - B \cdot \sum_{ i = r_0 + 1 }^{ r } j_i }   . 
    \end{equation}

\end{pro}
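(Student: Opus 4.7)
The plan is to cover the discrete interval $(m : N_r)$ by consecutive blocks each of length $L := \ord_{N_r/Q}(b)$ and then apply the per-interval estimate \eqref{eq_I} of Proposition~\ref{round-esti} to each block.

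Concretely, I would set $K = \lceil (N_r - 1 - m)/L \rceil$ and let $I_k = \{m + (k-1)L + 1, \ldots, m + kL\}$ for $1 \le k \le K$. Each $I_k$ is a discrete interval of exact length $L$ contained in $(m : \infty)$, and $\bigcup_{k=1}^{K} I_k \supseteq \{m+1, \ldots, N_r - 1\}$ (the last block $I_K$ may overshoot past $N_r - 1$, but since $|\widehat{\mu}| \ge 0$ this only enlarges the right-hand side). Thus
$$\sum_{n = m+1}^{N_r - 1} |\widehat{\mu}(hb^n - hb^m)| \le \sum_{k=1}^{K} \sum_{n \in I_k} |\widehat{\mu}(hb^n - hb^m)|.$$
Because $r \ge r_1$ and $m \ge n_0 - 1$, the hypotheses of Proposition~\ref{round-esti} are satisfied for each $I_k$, which yields the per-block bound $A \cdot L \cdot e^{-B \sum_{i=r_0+1}^{r} j_i}$. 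Summing over $k$ gives an upper bound of $K \cdot A \cdot L \cdot e^{-B \sum_{i=r_0+1}^{r} j_i}$.

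To close the estimate, I would bound $KL$ directly. By definition of the ceiling, $KL \le (N_r - 1 - m) + L \le N_r + L$. Since $L = \ord_{N_r/Q}(b)$ divides $\phi(N_r/Q)$, and $\phi(N_r/Q) \le N_r/Q \le N_r$, we have $L \le N_r$, hence $KL \le 2N_r$. Substituting gives the claimed inequality $2A \cdot N_r \cdot e^{-B \sum_{i=r_0+1}^{r} j_i}$.

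The argument is almost entirely a bookkeeping step resting on the two previous propositions: Proposition~\ref{round-esti} contributed the Fourier-analytic input on a single order-length interval, while Proposition~\ref{3.2-number-count} supplied the well-distributed partition that powered it. The only mildly delicate point is the inequality $L \le N_r$, which is what absorbs the ceiling overshoot into a harmless factor of $2$ and accounts for the constant $2A$ (rather than $A$) on the right-hand side of \eqref{key-ineq}.
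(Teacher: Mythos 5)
Your proposal is correct and follows essentially the same route as the paper: cover $[m+1:N_r-1]$ by intervals of length $\ord_{N_r/Q}(b)$, apply the per-interval bound (\ref{eq_I}) of Proposition \ref{round-esti} to each, and absorb the overshoot via $\ord_{N_r/Q}(b)\le \phi(N_r/Q)\le N_r$, which is exactly where the factor $2A$ comes from in the paper as well. The only cosmetic difference is the bookkeeping of the number of blocks ($\lceil (N_r-1-m)/L\rceil$ versus $[N_r/t]+1$), which changes nothing.
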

\begin{proof}
     Since we notice that $| \widehat{\mu}(x)| = |\widehat{\mu}(-x)|, x\in \mathbb{R} $, it suffices to consider the case when $ h $ is a positive integer.

   Let $ m $ be an integer with $ n_0 - 1 \le m < N_r - 1 $.  We now cover $[m+1 : N_r-1]$ by discrete intervals of length $t : =\ord_{ N_r / Q }(b)$. In addition, we require that these intervals are contained in $ ( m : \infty ) $. We need at most $L = [N_r/t]+1$ many intervals. Denote these intervals by $J_1,\cdots, J_L$. By Proposition \ref{round-esti}, all of these intervals satisfy (\ref{eq_I}) for all $ r \ge r_1 $. Hence, we have for $ r\ge r_1 $, 
    \begin{equation}\label{eq4.2-large}
        \begin{aligned}
         \sum_{n = m +1 }^{ N_r - 1 } \vert \widehat{\mu}(hb^{n} - hb^{m} ) \vert & \le \sum_{ i=1 }^{ L  } \sum_{ n \in J_i } \vert \widehat{\mu}(hb^{n} - hb^{m} ) |\\
         & \le  L \cdot A \cdot \ord_{ N_r / Q }(b) \cdot e^{ - B \cdot \sum_{ i = r_0 + 1 }^{ r } j_i } \quad (\textup{by }(\ref{eq_I})) \\
         & \le A \cdot  (N_{r} / t + 1 )t \cdot  e^{ - B \cdot \sum_{ i = r_0 + 1 }^{ r } j_i } \quad (\textup{by the definition of } L \textup{ and }t) \\
          & \le 2A \cdot N_r e^{ - B \cdot \sum_{ i = r_0 + 1 }^{ r } j_i },
    \end{aligned}
    \end{equation}
    where the last inequality holds since $ t  = \ord_{N_r/Q}(b)\le \phi ( N_r / Q ) \le \phi( N_r ) \le N_r$.
\end{proof}

    
    We are now completely equipped to prove Theorem \ref{thm-1.5}.
\begin{proof}[Proof of Theorem \ref{thm-1.5}]
    Recall that $ \{ q_n \}_n $ is a strictly increasing sequence of primes with $ q_n = O( n^d ) $ for some $ d \ge 1 $. Let $ c $ be the positive constant chosen in Lemma \ref{esti-kr}. Let $ b \ge 2 $ be an integer, and let $ h $ be a non-zero integer. Let $ n_0 $ and $ r_1 $ be the two integers chosen in (\ref{def-n0}) and Proposition \ref{round-esti} respectively.
    By Proposition \ref{key-prop}, there exist two constants $ A, B > 0 $ such that for all $ r \ge r_1 $ and $ n_0 - 1 \le m < N_r - 1 $, the following inequality holds:
    \begin{equation}\label{4.3eq-1}
        \sum_{n = m +1 }^{ N_r - 1 } \vert \widehat{\mu}(hb^{n} - hb^{m} ) \vert \le 2A \cdot N_{r}e^{ - B \cdot \sum_{ i = r_0 + 1 }^{ r } j_i }.
    \end{equation}
We are going to apply Theorem \ref{DEL} by showing that $\mathsf{DEL}$ criterion holds. Inside the sum of (\ref{DEL}), we first take out the diagonal terms $n=m$ in the double sum and then notice that the rest of the sums is symmetric between $m$ and $n$, so (\ref{DEL}) can be rewritten as
    \begin{equation}\label{4.3eq-2}
    \sum_{N=1}^{\infty} \dfrac{1}{N^2} + \sum_{ N = 2 }^{ \infty } \frac{2}{N^3} \sum_{ m=0 }^{ N-2 }\sum_{ n= m+1 }^{ N-1 } \left\vert \widehat{ \mu }(  h(b^{n} - b^{m} ) ) \right\vert.
    \end{equation}
    We just need to show that the second summation is finite. If $n_0 = 1$, we keep the original term.   If $ n_0 \ge 2 $, discarding finitely many terms for $2\le N <n_0$, the second sum on the right hand side of  (\ref{4.3eq-2}) is  split  as follows
    \begin{equation*}\label{n_0+1-to-infty}
        \sum_{ N = n_0 + 1 }^{ \infty } \frac{2}{N^3} \sum_{ m=0 }^{ n_0 - 2 }\sum_{ n= m+1 }^{ N-1 } \left\vert \widehat{ \mu }(  h(b^{n} - b^{m} ) ) \right\vert + \sum_{ N = n_0 + 1 }^{ \infty } \frac{2}{N^3} \sum_{ m= n_0 - 1 }^{ N - 2 }\sum_{ n= m+1 }^{ N-1 } \left\vert \widehat{ \mu }(  h(b^{n} - b^{m} ) ) \right\vert,
    \end{equation*}
    and the first summation is upper bounded by $ \sum_{N = n_0 + 1 }^{ \infty } N^{-3}\cdot ( n_0 - 1) \cdot N < \infty. $
    Therefore, we just need to show that 
    $$
    \sum_{ N = n_0 + 1 }^{ \infty } \frac{2}{N^3} \sum_{ m= n_0 - 1 }^{ N - 2 }\sum_{ n= m+1 }^{ N-1 } \left\vert \widehat{ \mu }(  h(b^{n} - b^{m} ) ) \right\vert<\infty.
    $$
    As this is also the sum of (\ref{4.3eq-2}) when $n_0 = 1$, we just need to show that the above holds for all $n_0\ge 1$.  
    
    To this end, we decompose the summation on $N$ into blocks of discrete intervals $(N_{r-1} : N_r]$ for $r\ge 1$. Discarding finitely many terms,  it suffices to show that
    \begin{equation}\label{4.3eq-3}
           \sum_{ r = r_1 }^{ \infty } \sum_{ N_{ r-1 } < N \le N_r } \frac{2}{N^3} \sum_{ m= n_0 - 1 }^{ N-2 }\sum_{ n= m+1 }^{ N-1 } \left\vert \widehat{ \mu }(  h(b^{n} - b^{m} ) ) \right\vert<\infty.
    \end{equation}
We have the following estimations:  
    \begin{equation}\label{4.3eq-9}
    \begin{aligned}
        & \sum_{ r = r_1 }^{ \infty } \sum_{ N_{ r-1 } < N \le N_r } \frac{2}{N^3} \sum_{ m= n_0 - 1 }^{ N-2 }\sum_{ n= m+1 }^{ N-1 } \left\vert \widehat{ \mu }(  h(b^{n} - b^{m} ) ) \right\vert \\
        \le & \sum_{ r = r_1 }^{ \infty } \dfrac{ 2 N_r }{ N_{r-1}^{3} } \sum_{ m= n_0 - 1 }^{ N_r - 2 }\sum_{ n= m+1 }^{ N_r -1 } \vert \widehat{ \mu }(  h(b^{n} - b^{m} ) ) \vert \\
        \le & 4 A \cdot \sum_{ r = r_1 }^{ \infty } \dfrac{ N_{r}^{3} }{ N_{r-1}^{3} }e^{ - B \cdot \sum_{ i = r_0 + 1 }^{ r } j_i } \quad (\textup{by }(\ref{4.3eq-1})).
    \end{aligned}  
    \end{equation}
   
    Next, we have 
    \begin{equation}\label{sumj-i}
  {\bf Claim:}  \ \exists C_0>0 \ \ \mbox{such that} \     \sum_{ i = r_0 + 1 }^{ r } j_i \ge C_0 r^{ d + 1 }. 
    \end{equation}
Indeed, by Lemma \ref{esti-kr}, for $ r \ge r_0 + 1 $, $ k_r \le c\log b \cdot \dfrac{r^d}{\log r} $. Recall that $ \ell_r = r^d $, then there exists a constant $ C_1 = C_1( b ,c ) > 0 $ such that for large $ r $, $ j_r = \ell_r - k_r \ge C_1 r^{d}  $. It follows that there exists $ C_0 = C_0( b,c,d ) > 0 $ such that the claim \eqref{sumj-i} holds.

    \smallskip
    
    Recall that $ q_r \le c\cdot r^d $ for $ r \ge 1 $.
    By the definition of $ N_r $,
    \begin{equation}\label{4.3eq-4}
    \begin{aligned}
        \sum_{ r = r_1 }^{ \infty } \dfrac{ N_{r}^{3} }{ N_{r-1}^{3} } e^{ - B \cdot \sum_{ i = r_0 + 1 }^{ r } j_i } & = \sum_{ r = r_1 }^{ \infty } q_{r}^{ 3 \ell_r } e^{ - B \cdot \sum_{ i = r_0 + 1 }^{ r } j_i } \\
        & \le \sum_{ r = r_1 }^{ \infty } ( c \cdot r^d )^{ 3 \ell_r } e^{ - B \cdot \sum_{ i = r_0 + 1 }^{ r } j_i }  \\
        & \le \sum_{ r = r_1 }^{ \infty } e^{ 3r^d \cdot ( d \ln r + \ln c  ) } \cdot e^{ - B C_0 \cdot r^{ d + 1 } } \quad (\textup{By } (\ref{sumj-i})) \\
        & = \sum_{ r = r_1 }^{ \infty } e^{ ( 3d \ln r + 3\ln c - BC_0 \cdot r ) r^d }     \\
        & < \infty,
    \end{aligned}
    \end{equation}
    where the last inequality holds since $ 3d \ln r + 3\ln c - BC_0 \cdot r $ tends to negative infinity for large $ r $.
    This justifies (\ref{4.3eq-3}). Hence, the Davenport, Erd\H{o}s and LeVeque's criterion in Theorem \ref{DEL} holds for all $b\ge2, h\in\N$.  This completes the proof of Theorem \ref{thm-1.5}. 
    \end{proof}

Finally, we remark that the assumption on the digit sets $\D_n\subset\{0,1,\cdots M_n-1\}$ and the weights of Theorem \ref{thm-1.5}  can be weakened to
\begin{equation}\label{eq-sharper}
\sup_{n\ge 1}\sup_{\xi\in[\frac16,\frac56]} |{\mathsf{M}_{\D_n}}(\xi)|<1, ~~ \textup{where } {\mathsf{M}_{\D_n}}(\xi) = \sum_{d\in\D_n}\omega_{d,n}e^{-2\pi i d \xi}, 
\end{equation}
and $\sum_{d\in\D_n}\omega_{d,n}=1$, for $ n \ge 1 $. Then Lemma \ref{cos-esti} holds via this assumption and all the rest of the arguments in this section will remain the same. The interval $[1/6,5/6]$ and the range for which our digit lies within defined as in (\ref{5.1eq-1}) were modified from  the proof in \cite[p.97-98]{Cassels1959}. It is possible to adjust these parameters to obtain some sharper conditions than (\ref{eq-sharper}). The only requirement is to ensure $\alpha$ in (\ref{5.9}) can still be chosen to be less than 1. We do not plan to optimize the constants here as it is not necessary for the purpose of our paper.

\section{Proof of Theorem \ref{main-theorem} and other dimensional results}\label{proof-th-bigset}

\subsection{Full Hausdorff dimension.} We are going to prove Theorem \ref{main-theorem} and some other large dimensional results in this section. As a warm-up,  we begin by proving

\begin{pro}\label{main-theorem0}
There exists a Moran set $K$ of Hausdorff dimension 1 such that $K$ is an absolutely normal set of uniqueness. 
\end{pro}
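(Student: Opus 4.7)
The plan is to apply Proposition \ref{prop-large-set} with $\omega_n=1/2$ and a careful choice of the auxiliary digit sets $\mathcal{E}_n$, arranged so that $\#\mathcal{F}_n$ is close to $M_n$ on almost every level while the constraint \eqref{liminf-eq} is still met along a sparse subsequence.

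Concretely, I would fix $\{q_n\}$ satisfying the hypotheses of Theorem \ref{thm-1.5} (taking $d=1$ and $q_n=p_n$ for definiteness), so that $M_n=q_{s+1}$ on the block $(L_s:L_{s+1}]$ of length $\ell_{s+1}$. On the sparse subsequence $n_j:=L_{j-1}+1$ I would set $a_{n_j}=2$, which makes $(a_{n_j}+1)/M_{n_j}=3/M_{n_j}\le 3/7<5/6$; on every other $n$ I would take $a_n$ to be the largest even integer strictly less than $M_n-1$, which since $M_n$ is an odd prime at least $7$ gives $\#\mathcal{F}_n=M_n-1$. Proposition \ref{prop-large-set} then yields directly that $K$ is a set of uniqueness and that $\mu\ast\nu$ is pointwise absolutely normal. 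Since the convolution of the equi-weighted atomic measures on $\mathcal{D}_n=\{0,1\}$ and $\mathcal{E}_n$ is the equi-weighted atomic measure on $\mathcal{F}_n$, the measure $\mu\ast\nu$ is the uniform Moran measure on $K$: it charges every basic interval and is supported on $\mathbb{A}$, so $K\cap\mathbb{A}$ is dense in $K$.

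It remains to show $\dim_H(K)=\dim_H(K\cap\mathbb{A})=1$. First I would verify that
$$\frac{\sum_{k=1}^N\log\#\mathcal{F}_k}{\sum_{k=1}^N\log M_k}\longrightarrow 1,$$
which holds because the loss from the non-sparse terms is $\sum_k\log(M_k/(M_k-1))=O(\sum_k 1/M_k)$, of lower order than $\sum_k\log M_k$, while the sparse loss of at most $\sum_{j}\log q_j$ is dwarfed by the main contribution $\sum_s\ell_s\log q_s$ (smaller by a factor $\ell_s\to\infty$). Second, I would use that the level-$n$ basic intervals of $K$ are consecutive subintervals of their parent interval of length $(M_1\cdots M_n)^{-1}$, so for any $x\in K$ and any radius $(M_1\cdots M_{n+1})^{-1}\le r<(M_1\cdots M_n)^{-1}$ the ball $B(x,r)$ meets at most three level-$n$ basic intervals, giving $\mu\ast\nu(B(x,r))\le 3(\#\mathcal{F}_1\cdots\#\mathcal{F}_n)^{-1}$. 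Combined with the limit above, this forces the lower local dimension $\underline{d}(\mu\ast\nu,x)\ge 1$ for every $x\in K$, and hence $=1$ since $K\subset[0,1]$. The mass distribution principle, applied to the full-measure subset $K\cap\mathbb{A}$, then yields $\dim_H(K\cap\mathbb{A})\ge 1$, completing the proof.

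The only subtlety I anticipate is establishing the local dimension bound \emph{uniformly in $x\in K$} rather than merely $\mu\ast\nu$-almost-everywhere; but this is essentially elementary here given the consecutive-digit structure, so no additional separation or dynamical input is required beyond the construction already in place.
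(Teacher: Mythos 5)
Your proposal is correct in substance, but it proves the proposition by a genuinely different route from the paper's own proof of Proposition \ref{main-theorem0}. The paper makes the uniform choice $a_n = 2\cdot[M_n/4]$ at \emph{every} level, so (\ref{liminf-eq}) holds trivially, and then absorbs the resulting factor-$2$ loss per level via the growth estimate of Lemma \ref{eq-h-rate} ($h(r)/\log r\to 0$, because $M_n\to\infty$), getting $\eta(B(x,r))\le 8r\cdot 2^{h(r)}\le 8r^{1-\varepsilon}$ and hence $\dim_{\mathrm H}(K\cap\mathbb{A})\ge 1-\varepsilon$ for every $\varepsilon$. You instead keep almost all digits at almost every level ($\#\mathcal{F}_n=M_n-1$) and insert a sparse subsequence of small digit sets solely to trigger (\ref{liminf-eq}), then run an entropy-ratio computation $\sum_{k\le n}\log\#\mathcal{F}_k/\sum_{k\le n}\log M_k\to 1$ together with the three-interval ball-covering bound and the mass distribution principle. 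This is closer in spirit to the paper's proof of the stronger Theorem \ref{main-theorem}, but simpler than that proof: because your $\mathcal{E}_n$ consists of even digits with $\mathcal{D}_n=\{0,1\}$, every element of $\mathcal{F}_n$ has a unique representation, so $\mu\ast\nu$ stays equi-weighted and your local-dimension bound is deterministic and uniform in $x$, with no need for the unequal-weight/strong-law argument of Proposition \ref{prop-local-dim}. What the paper's route buys instead is independence from the density of the digit sets (it works even after discarding half the digits at every level, which is what later yields sets of any prescribed dimension $\alpha$ via $a_n=2[M_n^{\alpha}]$); what your route buys is that you never need Lemma \ref{eq-h-rate}.

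Two small instantiation slips, neither affecting the argument: (a) "$d=1$ and $q_n=p_n$" is inconsistent with Theorem \ref{thm-1.5}, since $q_1\ge 7$ is required and $p_n\sim n\log n$ is not $O(n)$; take instead, say, $q_n=p_{n+3}$ with $d=2$ (any admissible sequence works, as your asymptotics only use $M_n\ge 7$, $\ell_s\to\infty$, and $\log M_{n+1}=o\bigl(\sum_{k\le n}\log M_k\bigr)$). (b) Setting $a_{n_j}=2$ violates the stated constraint $2<a_n<M_n-1$ in (\ref{eq_E_n}); take $a_{n_j}=4$, which still gives $(a_{n_j}+1)/M_{n_j}\le 5/7<5/6$, so Proposition \ref{prop-large-set} applies verbatim.
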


 We will fix $ \{M_n\}_n $ to be the sequence of prime numbers we fixed since the introduction. The explicit expression is in (\ref{def_Mn}) in Section \ref{add_sec}. For each $ 0 < r < 1 $, we define $h(r)$ to be the unique integer such that 
\begin{equation}\label{def-h(r)}
    (M_1\cdots M_{h(r)+1})^{-1} <r \le (M_1\cdots M_{h(r)})^{-1}.
\end{equation}
$h(r)$ will be used throughout this section.  We begin with a growth rate estimate lemma. 
\begin{lem}\label{eq-h-rate}
    $$\lim_{r\to 0}\frac{h(r)}{\log r} = 0.$$
\end{lem}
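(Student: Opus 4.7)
The plan is to exploit the fact that the mixed radix bases $\{M_n\}_{n\ge 1}$ diverge to infinity. Recall from (\ref{def_Mn}) that $M_n=q_{s+1}$ whenever $n$ lies in the block $[L_s+1:L_{s+1}]$. Since $\{q_s\}$ is a strictly increasing sequence of primes and each block has finite length $\ell_{s+1}=(s+1)^d$, it follows immediately that $M_n\to\infty$ as $n\to\infty$. This qualitative fact will be enough; no quantitative growth rate on $q_n$ is needed for this lemma.

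First I would unwind the defining inequality (\ref{def-h(r)}) by taking logarithms, obtaining
$$\log(1/r)\ \ge\ \log(M_1\cdots M_{h(r)})\ =\ \sum_{n=1}^{h(r)}\log M_n,$$
and observing that $h(r)\to\infty$ as $r\to 0^{+}$ (since the bound $(M_1\cdots M_{h(r)+1})^{-1}<r$ forces $M_1\cdots M_{h(r)+1}\to\infty$, hence $h(r)\to\infty$).

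Next, given an arbitrary constant $A>1$, I would choose $N_0=N_0(A)$ so that $M_n\ge A$ for all $n\ge N_0$, which is possible because $M_n\to\infty$. Then for $r$ small enough that $h(r)\ge N_0$, the displayed inequality yields
$$\log(1/r)\ \ge\ \sum_{n=N_0}^{h(r)}\log M_n\ \ge\ (h(r)-N_0+1)\log A,$$
so
$$0\ \le\ \frac{h(r)}{\log(1/r)}\ \le\ \frac{h(r)}{(h(r)-N_0+1)\log A}\ \xrightarrow[r\to 0]{}\ \frac{1}{\log A}.$$
Since $A>1$ is arbitrary, $\limsup_{r\to 0}h(r)/\log(1/r)=0$, hence the limit is $0$. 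Translating back via $\log r=-\log(1/r)$ gives $h(r)/\log r\to 0$, as claimed.

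There is essentially no obstacle: the statement is qualitative, so beyond $M_n\to\infty$ nothing is needed. The only points to be careful about are the verifications that $h(r)\to\infty$ and that the contribution of the finitely many initial terms $\sum_{n<N_0}\log M_n$ is absorbed into a constant that becomes negligible as $h(r)\to\infty$. One could in fact obtain the stronger quantitative bound $h(r)=O(\log(1/r)/\log\log(1/r))$ by using the block structure $\ell_s=s^d$ together with $q_s\ge s$, but this is not required for the stated lemma.
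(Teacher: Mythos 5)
Your proof is correct and uses essentially the same idea as the paper's: fix a threshold (your $N_0$, the paper's $s_0$) beyond which the bases are uniformly large, bound $\log(1/r)$ below by the tail sum $\sum_{n\ge N_0}^{h(r)}\log M_n$, conclude $\limsup h(r)/\log(1/r)\le 1/\log A$, and let the threshold tend to infinity. The only difference is cosmetic: you abstract the block structure into the single fact $M_n\to\infty$ and parametrize by the lower bound $A$ rather than by the prime index $s_0$, with $\log A$ playing exactly the role of $\log q_{s_0}$ in the paper.
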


\begin{proof}
Since $ \dfrac{h(r)}{ \log r } \le 0 $, it suffices to show $ \liminf_{ r \to 0 } \dfrac{h(r)}{\log r} \ge 0 $. Let $ s_0 $ be a positive integer. For sufficiently small $ r > 0 $, there exist unique $ s \ge s_0 + 1 $ and $ 1 \le j \le \ell_{s+1} $ such that 
$$
h(r) = L_s + j = \sum_{i=1}^s \ell_i +j.
$$
Combining this with the definition of $ M_n $ in (\ref{def_Mn-introd}) and the upper bound in (\ref{def-h(r)}), 
    $$ r \le ( q_{1}^{ \ell_1 } \cdots q_{s}^{ \ell_{s} } q_{s+1}^j  )^{ -1 }. $$  
    Since $ \{ q_n \}_n $ is strictly increasing,  it follows that
    \begin{equation}\label{q_s_0}
        r \le \left( q_{1}^{ \ell_1 } \cdots q_{ s_0 }^{ \ell_{s_0} } \cdot  q_{s_0}^{ \ell_{ s_0 + 1 } + \cdots + \ell_s + j }  \right)^{ -1 }.
    \end{equation}
By (\ref{q_s_0}), we have 
\begin{align*}
    \dfrac{h(r)}{ \log r } & \ge \dfrac{ h(r) }{ \log \left( q_{1}^{ \ell_1 } \cdots q_{ s_0 }^{ \ell_{s_0} } \cdot  q_{s_0}^{ \ell_{ s_0 + 1 } + \cdots + \ell_s + j }  \right)^{ -1 } } \\
    & = - \dfrac{ h(r) }{ \ell_1 \log q_1 + \cdots + \ell_{s_0} \log q_{ s_0 } + ( h(r) - \ell_1 - \cdots - \ell_{s_0} ) \log q_{s_0} },
\end{align*}
which tends to $ - \dfrac{1}{ \log q_{s_0} } $ when $ r $ tends to $ 0 $ (since $ h(r) $ tends to infinity as $ r \to 0 $). Therefore, $ \liminf_{ r \to 0 } \dfrac{ h(r) }{ \log r } \ge - \dfrac{1}{ \log q_{s_0} } $. Let $ s_0 $ tend to infinity, we complete the proof.
\end{proof}

Before we start the proof, let's also recall the mass distribution principle that computes the $ \mathcal{H}^{ \varphi } $-measure as well as the Hausdorff dimension.

\begin{thm}[See e.g. \cite{BishopPeres2017} Theorem 4.3.3]\label{Peres-Bishop-thm}
    Let $ \nu $ be a Borel measure and let $ E \subset \mathbb{R}^{d} $ be a Borel set. Let $ \varphi $ be a gauge function.  If there exists some $ \alpha > 0 $ such that
    \begin{equation}\label{uplim-meas}
        \limsup_{ r \rightarrow 0 } \dfrac{ \nu( B(x,r) ) }{ \varphi(r) } < \alpha,
    \end{equation}
    for all $ x \in E $, then $ \mathcal{H}^{ \varphi }(E) \ge \alpha^{-1} \nu(E) $.  
\end{thm}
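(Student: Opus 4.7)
The plan is to carry out the standard stratification argument underlying the mass distribution principle: since the hypothesis only controls $\nu(B(x,r))$ on a neighborhood of zero whose size a priori depends on $x$, I will first exhaust $E$ by pieces on which the bound holds uniformly, and then bound $\nu$ on each piece against $\mathcal{H}^{\varphi}$ using the cover definition.

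\textbf{Step 1 (stratification).} For each $k \in \mathbb{N}$ I will set
\[
E_k := \bigl\{ x \in E : \nu(B(x,r)) \le \alpha\,\varphi(r) \text{ for every } 0 < r \le 1/k \bigr\}.
\]
Because $\limsup_{r \to 0} \nu(B(x,r))/\varphi(r) < \alpha$ at every $x \in E$, each $x$ lies in some $E_k$, so $E = \bigcup_{k \ge 1} E_k$ with $\{E_k\}$ nondecreasing. I will also need to check that each $E_k$ is Borel: for fixed $r>0$, $x \mapsto \nu(B(x,r))$ is Borel (indeed, upper semicontinuous with respect to open balls), and the defining condition of $E_k$ can be written as a countable intersection of Borel inequalities using the monotonicity of $r \mapsto \nu(B(x,r))$ and the continuity of $\varphi$.

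\textbf{Step 2 (cover comparison and limits).} Next I will fix $k$ and take $0 < \delta \le 1/k$, and then let $\{U_i\}$ be an arbitrary countable cover of $E_k$ with $|U_i| \le \delta$. Discarding the $U_i$ that miss $E_k$, I pick $x_i \in U_i \cap E_k$; then $U_i \subseteq \overline{B}(x_i, |U_i|)$ and $|U_i| \le 1/k$, so the definition of $E_k$ yields
\[
\nu(U_i) \le \nu\bigl(\overline{B}(x_i, |U_i|)\bigr) \le \alpha\,\varphi(|U_i|).
\]
Summing over $i$ and taking the infimum over all admissible covers gives $\nu(E_k) \le \alpha\,\mathcal{H}^{\varphi}_\delta(E_k)$; letting $\delta \to 0$ produces $\nu(E_k) \le \alpha\,\mathcal{H}^{\varphi}(E_k) \le \alpha\,\mathcal{H}^{\varphi}(E)$. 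Continuity of $\nu$ from below will then upgrade this to $\nu(E) \le \alpha\,\mathcal{H}^{\varphi}(E)$, i.e., $\mathcal{H}^{\varphi}(E) \ge \alpha^{-1}\nu(E)$.

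\textbf{Main difficulty.} I do not expect a serious obstacle; the argument is essentially bookkeeping once the stratification decouples the uniform radius threshold from the pointwise assumption. The two spots requiring a moment of care are the Borel measurability of $E_k$ (needed so that $\nu(E_k) \to \nu(E)$ by continuity from below) and the choice to compare $U_i$ with the ball $\overline{B}(x_i, |U_i|)$ of radius equal to the \emph{diameter} of $U_i$: this is what guarantees $U_i \subseteq \overline{B}(x_i, |U_i|)$ when $x_i \in U_i$, and hence allows the pointwise bound on balls to be transferred to arbitrary cover elements.
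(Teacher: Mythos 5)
Your argument is correct, and it is essentially the standard proof of the mass distribution principle; the paper itself does not prove this statement but simply quotes it from Bishop–Peres (Theorem 4.3.3), so your write-up supplies the expected textbook argument (stratify by a uniform radius threshold, compare covering sets with balls centered at points of $E_k$, pass to the infimum, then use continuity from below). The only points needing a word of care are the ones you already flag, plus one small mismatch: the hypothesis controls $\nu(B(x_i,r))$ (presumably open balls), while you bound $\nu(\overline{B}(x_i,|U_i|))$; this is fixed by noting $\overline{B}(x_i,|U_i|)\subseteq B(x_i,r)$ for every $r>|U_i|$ and letting $r\downarrow |U_i|$ using continuity of $\varphi$ (taking $\delta<1/k$ so such $r\le 1/k$ exist), and similarly the case $|U_i|=0$ is harmless since the hypothesis forces $\nu(\{x\})=0$ for $x\in E$.
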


\noindent{\bf Proof of Proposition \ref{main-theorem0}}. We just need to choose $ \{ a_n \}_{n=1}^{\infty} $ appropriately to define $ \{ {\mathcal E}_n \}_n $ in (\ref{eq_E_n}). To this end, we define 
$$
a_n = 2 \cdot\left[\frac{M_n}{4}\right] ,\textup{ for } n \ge 1.
$$
Then $\frac{a_{n}+1}{M_n} \le \frac{1}{2}+ \frac17<\frac56$ for all $n$, so (\ref{liminf-eq}) is satisfied. By Proposition \ref{prop-large-set}, the homogeneous Moran set $K$ in (\ref{eq_K_large})  is a set of uniqueness and $\mu\ast\nu$ is compactly supported on $K$ and is pointwise absolutely normal. Let $\eta = \mu\ast\nu$. We will complete the proof by showing that for all $\varepsilon>0$ and for all $x\in K$, the ball of radius $r$ around $x$,  $B(x,r)$, satisfies
\begin{equation}\label{eq-mass-distribution}
\eta (B(x,r)) \le 8 \cdot  r^{1-\varepsilon},    
\end{equation}
for all $r$ sufficiently small. As we know that $\eta$ is pointwise absolutely normal, $\eta (K\cap {\mathbb A}) = 1$ and by the mass distribution principle, $\dim_{\textup{H}}(K\cap {\mathbb A})\ge 1-\varepsilon$. Taking $\varepsilon\to 0$ completes the proof. 

Note that $\eta$ assigns equal mass to each $k^{th}$ generation intervals for each $k$.   For $ 0 < r \le 1/M_1 $, if $ I $ is an $ ( h(r) + 1 ) $-th level basic interval, then $ I $ has length $ ( M_1 \cdots M_{h(r) + 1} )^{-1} $, and 
    \begin{equation}\label{meas_fund_int_eta}
        \eta (I) = \prod_{ k=1}^{h(r) + 1}   \dfrac{1}{ 2 \cdot[ M_k / 4 ] + 2 }. 
    \end{equation}
    For all $ x\in K $, the ball $ B(x,r) $ contains at most 
    \begin{equation}\label{num_ball}
        2 \left( \left[ r/( M_1 \cdots M_{h(r) + 1} )^{-1} \right]  + 1 \right) 
    \end{equation}
    $ ( h(r) + 1 ) $-th level basic interval.  Combining (\ref{meas_fund_int_eta}) and (\ref{num_ball}), it follows 
    \begin{equation}\label{cal-esti}
        \begin{aligned}
             \eta(B(x,r)) \le & 2 \left( \left[ r/( M_1 \cdots M_{h(r) + 1} )^{-1} \right]  + 1 \right) \cdot \prod_{ k=1}^{h(r)+1} \dfrac{1}{ 2 \cdot [ M_k / 4 ] + 2 } \\
            \le &  8 r \cdot 2^{ h(r) }.  \\
        \end{aligned}
    \end{equation}
Using Lemma \ref{eq-h-rate}, for all $\varepsilon>0$ we can find $r_0>0$ such that for all $0<r<r_0$, $\frac{h(r)}{\log_{2} r}>-\varepsilon$. Thus, $2^{h(r)}< r^{-\varepsilon}$ for all $r$ sufficiently small. This combines with (\ref{cal-esti}) gives (\ref{eq-mass-distribution}). \qquad$\Box$
\smallskip

We remark that if we fix $\alpha\in (0,1)$ and we take $a_n = 2[M_n^{\alpha}]$. The same argument will result in an absolutely normal set of uniqueness of Hausdorff dimension $\alpha$. This also gives another construction of absolutely normal set of uniqueness with arbitrary dimensions, other than the self-similar set with Pisot contraction ratios mentioned in the introduction.

\subsection{Proof of Theorem \ref{main-theorem}.} We begin with the following lemma, whose conclusion is intuitively clear. 

  \begin{lem}\label{lemmaN} Let $h$ be the function defined in (\ref{def-h(r)}). For all continuous $g$ such that $g(r)\to \infty$ as $r\to 0$,  there exists an infinite set of integers ${\mathcal N}$ such that  
     \begin{equation}\label{eq-A-r-bound}
     2^{\#\A_r } \le g(r) ,     
     \end{equation}
     where $\A_r = [1: h(r)+1]\cap {\mathcal N}$. 
  \end{lem}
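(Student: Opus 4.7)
The plan is to construct $\mathcal{N} = \{n_1 < n_2 < \cdots\}$ inductively, making the gaps so large that the estimate $2^{\#\mathcal{A}_r}\le g(r)$ holds automatically. The key observation is that $\#\mathcal{A}_r$ jumps by $1$ exactly when $h(r)+1$ crosses some $n_k$, so it suffices to ensure that by the time $r$ shrinks enough for $h(r)+1$ to reach $n_k$, the function $g$ has already grown past $2^k$.

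First I would use the hypothesis $g(r)\to\infty$ as $r\to 0^+$ to pick a strictly decreasing sequence $\delta_k\to 0$ such that $g(r)\ge 2^k$ for every $r\in(0,\delta_k]$. Then, since the products $M_1\cdots M_n\to\infty$, I would inductively pick a strictly increasing sequence of positive integers $n_1<n_2<\cdots$ with
\[
(M_1M_2\cdots M_{n_k-1})^{-1}\le \delta_k, \qquad k\ge 1,
\]
and set $\mathcal{N}=\{n_k:k\ge 1\}$.

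For the verification, fix any $r>0$ small enough that $h(r)+1\ge n_1$, and let $k=\#\mathcal{A}_r$, i.e., the largest index with $n_k\le h(r)+1$. Then $h(r)\ge n_k-1$, and by the defining inequality \eqref{def-h(r)} for $h$, this forces
\[
r\le (M_1\cdots M_{h(r)})^{-1}\le (M_1\cdots M_{n_k-1})^{-1}\le \delta_k,
\]
so that $g(r)\ge 2^k=2^{\#\mathcal{A}_r}$, as required. For the bounded range $r>(M_1\cdots M_{n_1-1})^{-1}$ (where $\#\mathcal{A}_r=0$ and one would need $g(r)\ge 1$), one can simply enlarge $n_1$ at the start of the induction so that $\delta_1$ dominates this range; since $g(r)\to\infty$, such a choice is always available, and the rest of the selection goes through unchanged.

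There is no real obstacle: the argument is a standard sparse/diagonal selection, and the only mild subtlety is matching the monotonicity directions of $h$ (decreasing in $r$) with the growth of $g$ (blowing up as $r\to 0$), which the inductive choice of $\{n_k\}$ handles cleanly.
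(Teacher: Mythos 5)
Your proof is correct and uses the same sparse-selection idea as the paper, just parametrized from the opposite end. The paper first reduces to decreasing $g$ via $\widetilde g(r)=\inf_{t\le r}g(t)$, picks integers $n_k$ with $n_k\ge 2^k$, sets $r_k=\sup\{r>0: g(r)\ge n_k\}$, passes to a subsequence so that $h(r_k)$ is strictly increasing, and takes $\mathcal{N}=\{h(r_k)+1\}$. You instead extract radii $\delta_k\downarrow 0$ on which $g$ has already exceeded $2^k$ and then pick $n_k$ directly from the defining inequality \eqref{def-h(r)}, so that $h(r)+1\ge n_k$ forces $r\le\delta_k$. Your version is slightly cleaner: it skips both the monotonicity reduction and the subsequencing step, and it sidesteps a small ambiguity in the paper's claim that $\mathcal{A}_r=\mathcal{A}_{r_k}$ for $r_{k+1}<r\le r_k$ (which can fail when $h(r)=h(r_{k+1})$, since $h$ is a step function). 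The one soft spot in your write-up is the remark about the range $r>(M_1\cdots M_{n_1-1})^{-1}$ where $\#\mathcal{A}_r=0$: enlarging $n_1$ \emph{widens} rather than shrinks that range and does not force $g\ge 1$ there. This is harmless, though — the lemma is invoked only inside a $\limsup_{r\to 0}$ via the mass distribution principle, so the inequality is needed only for $r$ sufficiently small, which both your argument and the paper's deliver.
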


\begin{proof}
    Let $\widetilde{g}(r) = \inf \{g(t): t\le r\}$. The function $ \widetilde{g} $ is continuous and satisfies  $ \widetilde{g}( r ) \le g(r)  $. It suffices to show (\ref{eq-A-r-bound}) with $\widetilde{g}(r)$ on the right hand side. As  $\widetilde{g}(r)$ is decreasing when $r$ increases and $\widetilde{g}(r)\le g(r)$, by replacing $g$ with $\widetilde{g}$ if necessary, we will assume without loss of generality that $g$ is a decreasing function.  Let $n_1 = 2$. We now choose a subsequence $ \{ n_k \}_{ k \ge 1 } $ of $ \mathbb{N} $ such that the integral part $[\log_2 n_k] \ge k$ for all $ k $.  Let
    $$
    r_k = \sup \{ r>0: g(r)\ge n_k\}. 
    $$
By further choosing a subsequence if necessary, we may assume that $ \{ h(r_k) \}_k $ is a strictly increasing sequence of integers.  Define 
$$
{\mathcal N} = \{h(r_{k})+1: k = 1,2,3,...\}.
$$
In this case, $\#{\mathcal A}_{r_{k}} = k\le [\log_2 n_k]$. Thus, $2^{\#\A_{ r_{ k } } } \le n_k\le g(r_{k})$, where the second inequality holds by the definition of $ r_k $ and the continuity of $ g $.  For all $ 0 < r < 1 $, we choose $k$ so that $r_{{k+1}}<r\le r_{k}$. Then $\A_r = \A_{r_{k}}$. Hence, 
$$
2^{\#\A_r} \le n_k \le g(r_{k}) \le g(r),
$$
since $g$ is decreasing. This completes the proof. 
\end{proof}

\begin{proof}[Proof of Theorem \ref{main-theorem}.] Fix ${\mathcal N}$ to be the subset of integers in Lemma \ref{lemmaN} with $g(r) = \varphi(r)/r$. We will now choose $ \{ \mathcal{E}_{n} \}_{n \ge 1} $ to be a sequence of digit sets defined by
    \begin{equation}
      \mathcal{E}_{n} = \begin{cases}
          \{ 0, 2, 4, \cdots, 2  \cdot [M_n / 4]  \}, & n \in \mathcal{N} \\
          \{ 0, 1, 2, \cdots, M_n - 2 \}, & n \notin \mathcal{N}      
      \end{cases}
    \end{equation}
    for $ n \ge 1 $. Let 
    $$ \nu = \Conv_{n=1}^{ \infty } \dfrac{1}{\# \mathcal{E}_{n}} \sum_{ d\in \mathcal{E}_n } \delta_{ d \cdot (M_1 \cdots M_n)^{-1} }  $$
    be the Cantor-Moran measure generated by $ \{ M_n \}_{n \ge 1} $ and $ \{ \mathcal{E}_{n} \}_{n \ge 1} $. Since convolution of the Dirac measures is supported on the arithmetic sum of the supports, we have that 
    \begin{equation}\label{eq-conv-lambda}
        \mu \ast \nu = \Conv_{n=1}^{ \infty } \lambda_n, 
    \end{equation} 
    where $ \lambda_n  = \delta_{\mathcal D_n}\ast \delta_{{\mathcal E}_n}$, or more explicitly, 
    \begin{equation*}
        \lambda_n = \begin{cases}
            \dfrac{1}{ 2 \cdot [ M_n / 4 ] + 2 } \sum_{ j = 0 }^{ 2 \times [ M_n / 4 ] + 1 } \delta_{ j \cdot ( M_1 \cdots M_n )^{ -1 } }, \quad &  n \in \mathcal{N}  \\
            \dfrac{1}{2( M_n - 1 )}( \delta_0 + \delta_{ ( M_n - 1 ) \cdot ( M_1 \cdots M_n )^{ -1 } } ) + \dfrac{1}{M_n - 1} \sum_{ j = 1 }^{ M_n - 2 } \delta_{ j \cdot ( M_1 \cdots M_n )^{-1} }, &  n \not\in \mathcal{N}
        \end{cases}
    \end{equation*}

    Let $ \{ \mathcal{F}_{n} \}_{n \ge 1} $ be a sequence of digit sets defined by 
    \begin{equation}
      \mathcal{F}_{n} = \begin{cases}
          \{ 0, 1, \cdots, 2 \cdot [M_n / 4] + 1 \}, & n \in \mathcal{N} \\
          \{ 0, 1, \cdots,  M_n - 1 \}, & n \notin \mathcal{N}         \end{cases}
    \end{equation}
    for $ n \ge 1 $. Indeed, ${\mathcal F}_n = {\mathcal D}_n+{\mathcal E}_n$.  Let $ K $ be the homogeneous Moran set generated by $ \{ M_n \}_{n \ge 1} $ and $ \{ \mathcal{F}_{n} \}_{n \ge 1} $, i.e.
      \begin{equation}\label{eq_K-thm1.3}
      K = \left\{ \sum_{ n = 1 }^{ \infty } \dfrac{ \omega_n }{ M_1 \cdots M_n }: \omega_n \in \mathcal{F}_n \textup{ for all }  n  \right\}  = \bigcap_{n=1}^{\infty} \bigcup_{ (\omega_1,\cdots, \omega_n)\in \mathcal{F}_1\times...\times{\mathcal F}_n} I_{\omega_1\cdots \omega_n}.
      \end{equation}
      Here $I_{\omega_1\cdots \omega_n} = \left[\sum_{ k = 1 }^{ n } \omega_k {\mathbf M_k}^{-1}, \sum_{ k = 1 }^{ n } \omega_k {\mathbf M_k}^{-1}+{\mathbf M_n}^{-1}\right]$, where we write  ${\bf M}_k = M_1\cdots M_k$ in short. $I_{\omega_1\cdots \omega_n}$ are known as the basic intervals that generate $K$.     Note that $\mu\ast\nu$ is fully supported on $K$ because it assigns positive measures to all basic intervals that generate $K$. The proof will be complete if we can prove  the following claims:  
    \begin{enumerate}
        \item $\mu\ast\nu$ is pointwise absolutely normal.
         \item $K$ is a set of uniqueness.
        \item $ \mathcal{H}^{ \varphi }( K ) > 0$.
       \item $\dimH(K\cap {\mathbb A})=1$.  
    \end{enumerate}

\noindent We will omit the proofs of (1) and (2) since they are respectively the same proofs as in Proposition \ref{prop-large-set} and the proof of Proposition \ref{main-theorem0}.   

\noindent(3). Next, we show that $ \mathcal{H}^{ \varphi }( K ) > 0 $.  Let 
    $$ \eta = \Conv_{n=1}^{ \infty } \dfrac{1}{\# \mathcal{F}_{n}} \sum_{ d\in \mathcal{F}_n } \delta_{ d \cdot (M_1 \cdots M_n)^{-1} }  $$
    be the associated Cantor-Moran measure assigning equal measure on each basic interval at the same level. With the same definition of $h(r)$ in (\ref{def-h(r)}), 
    \begin{equation}\label{meas_fund_int}
        \eta (I) = \prod_{ k \in \{ 1, \cdots, h(r) + 1 \} \cap \mathcal{N} } \dfrac{1}{ 2 \cdot [ M_k / 4 ] + 2 } \cdot \prod_{ k \in \{ 1, \cdots, h(r) + 1 \} \setminus \mathcal{N} } \dfrac{1}{ M_k }.  
    \end{equation}
    For all $ x\in K $, the ball $ B(x,r) $ contains at most $2 \left( \left[ r/( M_1 \cdots M_{h(r) + 1} )^{-1} \right]  + 1 \right) $  $ ( h(r) + 1 ) $-th level basic intervals (c.f.(\ref{num_ball})) . For convenience, let $ \mathcal{A}_r = [1: h(r)+1] \cap \mathcal{N} $ and $ \mathcal{B}_r = [1:h(r)+1]\setminus \mathcal{N} $. 
    Combining (\ref{meas_fund_int}) and (\ref{num_ball}), it follows 
    \begin{equation*}\label{cal-esti-m}
        \begin{aligned}
            \eta(B(x,r)) 
            \le & 2 \left( \left[ r/( M_1 \cdots M_{h(r) + 1} )^{-1} \right]  + 1 \right) \cdot \prod_{ k \in \mathcal{A}_r } \dfrac{1}{ 2 \cdot[ M_k / 4 ] + 2 } \cdot \prod_{ k \in \mathcal{B}_r } \dfrac{1}{M_k} \\
            \le & 4 ( r/( M_1 \cdots M_{h(r) + 1} )^{-1} ) \cdot \prod_{ k \in\mathcal{A}_r } \dfrac{1}{  M_k / 2  } \cdot \prod_{ k\in \mathcal{B}_r } \dfrac{1}{ M_k } \\
            = &  4  \left( 2^{\#\A_r } \cdot \left(\frac{r}{\varphi(r)}\right) \right) \varphi(r) \le 4 \varphi (r), \\ 
        \end{aligned}
    \end{equation*}
using Lemma \ref{lemmaN} by applying $ g(r) = \dfrac{ \varphi(r) }{ r } $ in the last line. We have thus obtain the criterion for the mass distribution principle, it follows that $ \mathcal{H}^{ \varphi }( K ) > 0 $. 

\smallskip

\noindent (4). Consider $\lambda = \mu\ast \nu$ in (\ref{eq-conv-lambda}).  We have already proved that  $\lambda$ is pointwise absolutely normal. We now consider the Hausdorff dimension of a measure $\lambda$, which is defined to be  
$$
\dimH\lambda = \inf \{\dimH(E): \lambda (E)>0, E \ \mbox{Borel}\}.
$$
We will prove the following proposition in the next subsection, since it requires some techniques from dimension theory.
\begin{pro}\label{prop-local-dim}
  $$\dimH\lambda = 1.$$
\end{pro}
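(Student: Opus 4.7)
The plan is to bound the lower local dimension $\underline d(\lambda,x):=\liminf_{r\to 0}\log\lambda(B(x,r))/\log r$ uniformly in $x\in K$, then invoke the standard pointwise criterion that $\underline d(\lambda,x)\ge 1$ on $\mathrm{supp}(\lambda)$ forces $\dim_{\textup H}\lambda\ge 1$. The matching upper bound $\dim_{\textup H}\lambda\le 1$ is immediate from $\mathrm{supp}(\lambda)\subset[0,1]$.

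The key step is the basic-interval estimate. Since $\lambda=\Conv_n\lambda_n$ is a convolution of atomic measures on the mixed-radix grids, independence of the digits yields $\lambda(I_n(\omega_1,\ldots,\omega_n))=\prod_{k=1}^n w_k(\omega_k)$ off a $\lambda$-null set of sums with ambiguous expansion, where $w_k(\omega_k):=\lambda_k(\omega_k(M_1\cdots M_k)^{-1})$. A direct inspection of the explicit formula for $\lambda_k$ shows $\max_{\omega_k}w_k(\omega_k)\le 2/M_k$ uniformly in $k$ (using $2[M_k/4]+2\ge M_k/2$ when $k\in\mathcal{N}$, and $M_k-1\ge M_k/2$ for $M_k\ge 7$ otherwise), so $\max_\omega\lambda(I_n(\omega))\le 2^n/(M_1\cdots M_n)$.

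Next I exploit that $M_k=q_{s(k)}$ with $s(k)\to\infty$ through successive blocks of length $\ell_s=s^d$, so the average $\sum_{k=1}^n\log M_k/n$ diverges; more explicitly, $\sum_{k=1}^n\log M_k=\Theta(n\log n)$, which dominates the correction $n\log 2$. Therefore, for every $\varepsilon>0$ and all large $n$,
\[
\max_\omega\lambda(I_n(\omega))\le (M_1\cdots M_n)^{-(1-\varepsilon)}.
\]

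To pass from basic intervals to balls, for each small $r>0$ choose $n$ with $(M_1\cdots M_n)^{-1}<r\le(M_1\cdots M_{n-1})^{-1}$; the ball $B(x,r)$ meets at most three level-$(n-1)$ basic intervals, so $\lambda(B(x,r))\le 3(M_1\cdots M_{n-1})^{-(1-\varepsilon)}\le 3(M_n r)^{1-\varepsilon}$. Using $M_n=O(n^{d/(d+1)})$ and $n=O(\log(1/r))$, the prefactor $M_n^{1-\varepsilon}$ is merely polylogarithmic in $1/r$ and can be absorbed into $r^{-\varepsilon'}$ for any $\varepsilon'>\varepsilon$. Letting $\varepsilon'\downarrow 0$ gives $\underline d(\lambda,x)\ge 1$ uniformly in $x\in K$, whence $\dim_{\textup H}\lambda\ge 1$. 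The most delicate point I foresee is justifying the product formula $\lambda(I_n(\omega))=\prod w_k(\omega_k)$; this relies on the a.s.\ uniqueness of the mixed-radix representation of $\sum_k X_k$ with $X_k\sim\lambda_k$ independent, and the ambiguous sums form a countable (hence $\lambda$-null) set.
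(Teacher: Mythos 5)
Your argument is correct, but it takes a genuinely different route from the paper's. The paper proves the proposition probabilistically: it invokes the local-dimension formula for Moran measures (Proposition \ref{KLS}), models the digits as independent random variables $X_k=\log\eta_k(\{\omega_k\})$, verifies the variance condition and applies Kolmogorov's strong law (Proposition \ref{prop-SLLN}) together with the Stolz--Ces\`aro theorem, concluding that $\liminf_n \log\lambda(I_n(x))/\log((M_1\cdots M_n)^{-1})\ge 1$ for $\lambda$-a.e.\ $x$. You instead observe that \emph{every} digit weight is at most $2/M_k$ (on $\mathcal{N}$ via $2[M_k/4]+2\ge M_k/2$, off $\mathcal{N}$ via $1/(M_k-1)\le 2/M_k$), which gives the deterministic worst-case bound $\lambda(I_{\omega_1\cdots\omega_n})\le 2^n(M_1\cdots M_n)^{-1}=(M_1\cdots M_n)^{-(1-o(1))}$ since $\sum_{k\le n}\log M_k=\Theta(n\log n)$; passing to balls and applying the mass distribution principle (already recorded in the paper as Theorem \ref{Peres-Bishop-thm}) yields the uniform Frostman-type estimate $\lambda(B(x,r))\le C_{\varepsilon}r^{1-\varepsilon}$ for all $x$ and small $r$, hence $\dimH\lambda\ge 1$, the upper bound being trivial. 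Your route is more elementary (no SLLN, no appeal to the external local-dimension proposition) and gives a stronger, pointwise-uniform conclusion; the paper's averaging argument is the more robust one, since it would still apply if some digits carried weights much larger than $1/M_k$, where a worst-case cylinder bound fails but the averaged $\log$-weights still behave correctly. Two small points to make explicit: the identity $\lambda(I_{\omega_1\cdots\omega_n})=\prod_{k\le n}w_k(\omega_k)$ needs single points to be $\lambda$-null, which follows because any atom of $\lambda$ has mass at most $\prod_k 2/M_k=0$ (countability of the ambiguous expansions alone is not enough without non-atomicity); alternatively, for the upper bound you only need that, apart from its two endpoints, every point of the support of $\lambda$ lying in $I_{\omega_1\cdots\omega_n}$ has first $n$ digits equal to $(\omega_1,\dots,\omega_n)$.
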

Assuming this proposition holds, as we know that $\lambda (K\cap {\mathbb A}) =1 >0$, it immediately implies that $\dimH(K\cap{\mathbb A})=1 = \dimH(K)$. 
\end{proof}

\subsection{Proof of Proposition \ref{prop-local-dim}}
This subsection is devoted to the proof of Proposition \ref{prop-local-dim}, which will complete the whole proof of Theorem \ref{main-theorem}. For each $x\in K$, we let $I_n(x)$ be the unique $n^{th}$ basic generation interval that contains $x$. From \cite[Proposition 3.1]{KLS2016}, we can compute the local dimension using basic intervals as follows:

\begin{pro}[\cite{KLS2016} Proposition 3.1]\label{KLS}
    Let $K$ be the Cantor set defined in (\ref{eq_K-thm1.3}). Then for $\lambda$-a.e. $x\in K$, 
    $$
    \dimH \lambda  = \liminf_{n\to\infty} \frac{\log (\lambda (I_n(x))}{\log((M_1\cdots M_n)^{-1})}.
    $$
\end{pro}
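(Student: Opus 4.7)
The plan is to prove Proposition \ref{KLS} by combining Kolmogorov's 0-1 law (to show that the liminf on the right-hand side is $\lambda$-almost surely constant) with matching upper and lower bounds for $\dimH \lambda$ (to identify that constant). The key structural observation is that $\lambda = \Conv_{k=1}^\infty \lambda_k$ is the pushforward of the product measure $\bigotimes_k \lambda_k$ under the coding $\pi : \prod_k \mathcal{F}_k \to K$, $(d_k)_k \mapsto \sum_k d_k/(M_1 \cdots M_k)$, which is injective off a $\lambda$-null countable set of boundary points. Writing $p_{k,d} := \lambda_k(\{d/(M_1 \cdots M_k)\})$, this yields the cylinder factorisation $\lambda(I_n(x)) = \prod_{k=1}^n p_{k, d_k(x)}$, so $\log \lambda(I_n(x))$ is a partial sum of independent (but not identically distributed) bounded random variables under the coding.

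Set $r_n := (M_1 \cdots M_n)^{-1}$ and $\alpha(x) := \liminf_n \log \lambda(I_n(x))/\log r_n$. Since altering finitely many digits of $x$ changes $\log \lambda(I_n(x))$ by a uniformly bounded quantity, which is $o(\log r_n^{-1})$ as $n \to \infty$, the functional $\alpha(x)$ is tail-measurable under the product structure; Kolmogorov's 0-1 law then forces $\alpha(x) = \alpha$ almost surely for some constant $\alpha$. For the upper bound $\dimH \lambda \le \alpha$, fix $\varepsilon > 0$: for $\lambda$-a.e.\ $x$ there are infinitely many $n$ with $\lambda(I_n(x)) \ge r_n^{\alpha + \varepsilon}$. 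Because basic intervals are either nested or disjoint, one extracts a countable disjoint subcover $\{I_{n_i}(x_i)\}$ of arbitrarily small diameter from this Vitali-type cover of a conull set, and then
$$\sum_i r_{n_i}^{\alpha + \varepsilon} \;\le\; \sum_i \lambda(I_{n_i}(x_i)) \;\le\; 1,$$
giving that $\mathcal{H}^{\alpha + \varepsilon}$ of a conull set is finite; hence $\dimH \lambda \le \alpha + \varepsilon$, and $\varepsilon \downarrow 0$ completes the bound.

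For the lower bound $\dimH \lambda \ge \alpha$, fix $\varepsilon > 0$ and set $E_N := \{x : \lambda(I_n(x)) \le r_n^{\alpha - \varepsilon}\ \text{for all}\ n \ge N\}$, so $\lambda(E_N) \uparrow 1$. For any Borel $E$ with $\lambda(E) > 0$, choose $N$ with $E' := E \cap E_N$ of positive $\lambda$-mass. Given a $\delta$-cover $\{U_j\}$ of $E'$, pick $n_j$ with $r_{n_j+1} \le |U_j| < r_{n_j}$; then $U_j$ meets at most two basic intervals at level $n_j$, and for those meeting $E'$ the mass is bounded by $r_{n_j}^{\alpha - \varepsilon} \le (M_{n_j+1}|U_j|)^{\alpha - \varepsilon}$ (the ``neighbour'' term is handled by enlarging $N$, since for $\lambda$-a.e.\ $x$ a Borel--Cantelli argument on the symbolic coding places $x$ in the interior of its basic intervals at sufficient depth). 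Invoking the regularity estimate
$$\frac{\log M_{n+1}}{\log r_n^{-1}} \;\longrightarrow\; 0 \qquad (n \to \infty),$$
which in our setting holds because $r_n^{-1}$ accumulates $\ell_s = s^d$ factors of every earlier prime $q_s$ while $M_{n+1}$ is a single prime of at most polynomial size, one gets $M_{n_j+1}^{\alpha - \varepsilon} \le |U_j|^{-\varepsilon}$ for $|U_j|$ small, hence $\lambda(U_j) \le 2|U_j|^{\alpha - 2\varepsilon}$. Summing, $\sum_j |U_j|^{\alpha - 2\varepsilon} \ge \lambda(E')/2 > 0$, so $\mathcal{H}^{\alpha - 2\varepsilon}(E) > 0$ and $\dimH E \ge \alpha - 2\varepsilon$; taking $\varepsilon \downarrow 0$ and infimum over $E$ yields $\dimH \lambda \ge \alpha$.

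The main obstacle is the lower bound, where converting an estimate on $\lambda(I_n(x))$ into an estimate on $\lambda(U)$ for a general cover element requires the regularity $\log M_{n+1}/\log r_n^{-1} \to 0$; without it, the factor $M_{n+1}^{\alpha - \varepsilon}$ would not be controlled by a negligible power of $|U|$ and the Hausdorff measure bound would fail. A secondary technical point is justifying that ``typical'' $x \in E_N$ sits in the interior of its basic intervals so that balls or cover sets of relevant scale touch only intervals whose $\lambda$-mass one already controls; both issues are resolved by the polynomial-in-$s$ repetition pattern of each prime $q_s$ built into $\{M_n\}$, after which Billingsley's mass distribution principle completes the proof.
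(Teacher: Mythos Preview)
Your argument is correct, but the paper does not prove this statement from scratch. It is quoted as Proposition~3.1 of \cite{KLS2016}, and the paper's ``proof'' consists only of checking that the filtration hypotheses $(F_1)$--$(F_4)$ of that reference are satisfied: one takes the half-open basic intervals $Q_{\omega_1\cdots\omega_n}=[a,b)$ when $I_{\omega_1\cdots\omega_n}=[a,b]$, with $\delta_n=(3M_1\cdots M_n)^{-1}$ and $\gamma_n=(M_1\cdots M_n)^{-1}$, and then invokes the cited proposition directly.

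Your route---Kolmogorov's 0--1 law for a.s.\ constancy of the symbolic liminf, a Vitali-type cover for the upper bound, and Billingsley's mass distribution principle for the lower bound---is a self-contained alternative. The paper's approach buys brevity; yours buys transparency, in particular by isolating the regularity condition $\log M_{n+1}/\sum_{k\le n}\log M_k\to 0$ as the real content of the lower bound (this is implicit in $(F_1)$--$(F_4)$ and is essentially a reformulation of Lemma~\ref{eq-h-rate}). One minor simplification: the Borel--Cantelli aside about interiors is not needed. In the lower bound you only have to control $\lambda|_{E'}(U_j)$, and any point of $E'\cap U_j$ lies in a level-$n_j$ basic interval whose $\lambda$-mass is already $\le r_{n_j}^{\alpha-\varepsilon}$; since $U_j$ meets at most two such intervals, the bound $\lambda|_{E'}(U_j)\le 2r_{n_j}^{\alpha-\varepsilon}$ follows without any interior argument.
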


With respect to the notations in \cite{KLS2016}, we can take $Q_{\omega_1\cdots \omega_n} = [a,b)$ if $I_{\omega_1\cdots \omega_n} = [a,b]$, $\delta_n =  ( 3 M_1\cdots M_n)^{-1}$ and $\gamma_n = (M_1\cdots M_n)^{-1}$. It generates the filtration for $K$. It is straightforward to check that the assumptions ($F_1$) to ($F_4$) in \cite{KLS2016} are satisfied. Hence, we can apply \cite[Proposition 3.1]{KLS2016} to obtain the above proposition. 

We also need the following version of the strong law of large numbers due to Kolmogorov, which can be found in \cite[p.389]{Shiryaev}.

\begin{pro}\label{prop-SLLN}
    Let $ \{ X_k \}_{ k \ge 1 } $ be a sequence of independent random variables defined on a probability space. Suppose that 
    \begin{equation}\label{variance-condition}
    \sum_{k=1}^{\infty} \frac{\textup{Var}(X_k)}{k^2}<\infty.
    \end{equation}
    Then almost surely
    $$
    \lim_{n\to\infty} \frac{1}{n}\left(\sum_{k=1}^n X_k -\sum_{k=1}^n{\mathbb E}(X_k)\right) = 0.
    $$
\end{pro}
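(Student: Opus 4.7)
The plan is to follow the classical route through the Khintchine--Kolmogorov convergence theorem combined with Kronecker's lemma. First, I would reduce to the centered case by setting $Y_k = X_k - \mathbb{E}[X_k]$. The $Y_k$ remain independent, with $\mathbb{E}[Y_k] = 0$ and $\mathrm{Var}(Y_k) = \mathrm{Var}(X_k)$, so the hypothesis $\sum_k \mathrm{Var}(X_k)/k^2 < \infty$ is preserved, and the claim reduces to showing $n^{-1}\sum_{k=1}^n Y_k \to 0$ almost surely.

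The main intermediate goal is to establish almost sure convergence of the random series $\sum_{k=1}^\infty Y_k/k$. For this I would invoke Kolmogorov's maximal inequality: for independent centered $L^2$ random variables $Z_1,\dots,Z_n$,
$$P\Bigl(\max_{1\le j\le n}\Bigl|\sum_{k=1}^j Z_k\Bigr|\ge \epsilon\Bigr)\le \epsilon^{-2}\sum_{k=1}^n\mathrm{Var}(Z_k).$$
Apply this to $Z_k = Y_k/k$ on each block $k \in (N, N+m]$ and let $m\to\infty$ to obtain
$$P\Bigl(\sup_{n>N}\Bigl|\sum_{k=N+1}^n Y_k/k\Bigr|\ge\epsilon\Bigr)\le\epsilon^{-2}\sum_{k>N}\mathrm{Var}(Y_k)/k^2,$$
which tends to $0$ as $N\to\infty$ by hypothesis. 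Hence the partial sums $S_n=\sum_{k=1}^n Y_k/k$ are almost surely Cauchy, so $\sum_{k=1}^\infty Y_k/k$ converges a.s.

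Finally I would apply Kronecker's lemma, a purely deterministic fact: if $\sum_{k=1}^\infty a_k$ converges and $(b_k)$ is a positive increasing sequence with $b_k\to\infty$, then $b_n^{-1}\sum_{k=1}^n b_k a_k\to 0$. Taking $b_k=k$ and $a_k=Y_k(\omega)/k$ for each $\omega$ in the full-measure set produced above yields $n^{-1}\sum_{k=1}^n Y_k(\omega)\to 0$, which is the desired conclusion.

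The reduction to mean zero is trivial and Kronecker's lemma is a short analytic argument, so the substantive step — and the only genuinely probabilistic one — is passing from the variance bound to almost sure convergence of the series via Kolmogorov's maximal inequality. That is where all the work is concentrated; everything else is bookkeeping.
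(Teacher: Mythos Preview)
Your proof is correct and follows the classical route (Kolmogorov's maximal inequality to get a.s.\ convergence of $\sum Y_k/k$, then Kronecker's lemma). The paper does not prove this proposition at all; it simply cites \cite[p.~389]{Shiryaev}, where exactly this argument appears, so your approach coincides with the intended one.
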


For each $n\in \N$, we let $\Omega_n = \{0,1,\cdots, M_n-1\}$ and $\eta_n$ be the probability measure on $\Omega_n$ such that 
\begin{enumerate}
    \item If $n\in{\mathcal N}$, then $\eta_n (\{i\}) = \frac{1}{2\cdot[M_n/4]+2}$, $\forall i\in\{0,\cdots, 2 \cdot [M_n/4] + 1 \}$,
    \item If $n\not\in{\mathcal N}$, then $\eta_n (\{0\}) = \eta_n(\{M_n-1\}) = \frac{1}{2(M_n-1)}$, and $\eta_n(\{i\}) = \frac{1}{M_n-1}$ if $i\in\{1,\cdots, M_n-2\}$.
\end{enumerate}
Let $\Omega  = \prod_{n=1}^{\infty}\Omega_n$ and $\eta$ be the product measure of $\eta_n$ defined on $\Omega$. Let $[\omega_1\cdots\omega_n]$ denote the cylinder set determined by fixing the first $n$ coordinates $ \omega_1, \cdots, \omega_n $.
We will claim the following holds:

\smallskip

 $$ {\bf Claim:}~~~~~~~\liminf\limits_{n\to\infty} \frac{\log \eta ([\omega_1\cdots\omega_n]) }{\log ( (M_1\cdots M_n)^{-1} ) } \ge 1 ~ \textup{for}~ \eta\textup{-}\textup{a.e.}~ \omega \in \Omega. $$
\smallskip

Suppose the claim holds. Under the natural coding map $\omega \mapsto\sum_{k=1}^{\infty} \omega_k (M_1\cdots M_k)^{-1}$, we have
$$
\eta ([\omega_1\cdots\omega_n])  = \lambda(I_{\omega_1\cdots\omega_n}). 
$$
Therefore, the claim implies that 
$$
\liminf_{n\to\infty} \frac{\log (\lambda (I_n(x))}{\log((M_1\cdots M_n)^{-1})} =1,
$$
for $\lambda$-a.e. $x$. By Proposition \ref{KLS}, $\dimH \lambda = 1 $ and thus Proposition \ref{prop-local-dim} follows.

\smallskip

It remains to justify the claim. To this end, for $ k \ge 1 $, let $X_k$ be the random variable on $\Omega$ defined by 
$$
X_k(\omega) = \log \eta_k( \{ \omega_k \} ), \textup{where} ~ \omega= \omega_1\omega_2\cdots.
$$
Then
\begin{equation}\label{liminf-lower estimate}
\begin{aligned}
\liminf_{n\to\infty}\frac{\log \eta ( [\omega_1\cdots\omega_n] ) }{\log  ((M_1\cdots M_n)^{-1}) }  =&\liminf_{n\to\infty} \frac{\sum_{k=1}^n X_k}{-\sum_{k=1}^n\log M_k}\\
\ge &\liminf_{n\to\infty}\frac{\sum_{k=1}^n (X_k-{\mathbb E}(X_k))}{-\sum_{k=1}^n\log M_k}+\liminf_{n\to\infty}\frac{\sum_{k=1}^n {\mathbb E}(X_k)}{-\sum_{k=1}^n\log M_k}.
\end{aligned}
\end{equation}
If $k\in{\mathcal N}$, $X_k$ is a constant random variable, so its variance $\mbox{Var}(X_k)$ is zero. On the other hand, if $k\not\in {\mathcal N}$, by a standard computation, 
$$
\mbox{Var}(X_k) = \frac{M_k-2}{(M_k-1)^2}(\log 2)^2 \to 0,
$$
when $k\to\infty$. Hence, (\ref{variance-condition}) is satisfied. By Proposition \ref{prop-SLLN} and the fact that $\frac1n\sum_{k=1}^n\log M_k\ge \log 2$ since all $M_k\ge 2$, we have 
$$
\liminf_{n\to\infty}\frac{\sum_{k=1}^n (X_k-{\mathbb E}(X_k))}{\sum_{k=1}^n\log M_k} =0,   ~~\mbox{for}~\eta\textup{-}\mbox{a.e.} ~\omega.
$$
On the other hand, 
$$
\frac{-{\mathbb E}(X_k)}{\log M_k} = \left\{\begin{array}{ll} \frac{\log(2[M_k/4]+2)}{\log M_k}& \mbox{if} ~ k \in{\mathcal N} \\ \frac{\log(2)/(M_k-1)+\log (M_k-1)}{\log M_k} & \mbox{if} ~  k \not\in{\mathcal N} 
\end{array}\right..
$$
Both of them tend to $1$ as $ k $ tends to infinity. It follows $ \lim_{ k \to \infty } \dfrac{ -{\mathbb E}(X_k) }{\log M_k} = 1 $. As $\sum_{k=1}^n\log M_k$ strictly increases to infinity as $n\to\infty$, by the Stolze-Ces\`{a}ro theorem, 
$$
\lim_{n\to\infty}\frac{\sum_{k=1}^n {\mathbb E}(X_k)}{-\sum_{k=1}^n\log M_k} = 1.
$$
Plugging back into (\ref{liminf-lower estimate}), the claim follows. 
  
\subsection{A slight improvement.} In this subsection, we aim to improve Proposition  \ref{main-theorem0} slightly as follows.
\begin{thm}\label{main-theorem2}
 If there exists a constant $c>0$ such that $\varphi$ satisfies
 $$
\lim_{r\to 0}\frac{rH(r)}{\varphi(r)}= 0, \mbox{where} ~ H(r) = \exp\left({c \left(\frac{\log (1/r)}{\log\log(1/r))}\right)^{\frac{1}{4}}}\right),
 $$ 
 then we can find a compact set of uniqueness $K$ such that ${\mathcal H}^\varphi(K)>0$, and 
    $$
    {\mathcal H}^\varphi(K\cap{\mathbb A})>0.
    $$
\end{thm}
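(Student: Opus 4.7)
The strategy is to run the construction of Theorem \ref{main-theorem} with carefully chosen parameters in Theorem \ref{thm-1.5}, and then to verify that the pointwise absolutely normal measure $\lambda = \mu \ast \nu$ itself (rather than the uniform Moran measure used in the proof of Theorem \ref{main-theorem} only to establish $\mathcal{H}^\varphi(K)>0$) satisfies the stronger mass-distribution bound $\lambda(B(x,r)) \le C\varphi(r)$ for every $x \in K$. Since $\lambda$ is fully supported on $K \cap \mathbb{A}$, Theorem \ref{Peres-Bishop-thm} will then yield $\mathcal{H}^\varphi(K\cap\mathbb{A}) \ge C^{-1}\lambda(K\cap\mathbb{A}) = C^{-1} > 0$.

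Concretely, I would fix $d = 3$ in Theorem \ref{thm-1.5} (so $\ell_n = n^3$) and choose the prime sequence $\{q_n\}$ with $q_n \in [C_q n^3,\, 2 C_q n^3]$ for a large constant $C_q$ to be fixed later in terms of the given $c>0$. Following the proof of Theorem \ref{main-theorem}, I would then define $\mathcal{E}_n$, $\mathcal{F}_n$, $\nu$, $\lambda = \mu \ast \nu$ and the Moran set $K$, selecting $\mathcal{N}$ via Lemma \ref{lemmaN} applied to $g(r) = \varphi(r)/(r H(r))$; this $g$ diverges to $\infty$ as $r\to 0$ exactly by the hypothesis. Then $K$ is a set of uniqueness via Proposition \ref{prop-large-set}(2) (for infinitely many $n\in\mathcal{N}$ we have $(a_n+1)/M_n \le 1/2 + 1/M_n < 5/6$), and $\lambda$ is pointwise absolutely normal via Proposition \ref{prop-large-set}(1) together with Theorem \ref{thm-1.5}.

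The key new ingredient is a sharp upper bound on the $\lambda$-measure of basic intervals. A direct inspection of $\lambda_n = \mu_n \ast \nu_n$ gives $\max_{\omega_n}\lambda_n(\{\omega_n\}) \le 2/M_n$ when $n\in\mathcal{N}$ and $\max_{\omega_n}\lambda_n(\{\omega_n\}) = 1/(M_n-1)$ when $n\notin\mathcal{N}$; multiplying over the first $h(r)+1$ levels produces
\begin{equation*}
    \max_I \lambda(I) \;\le\; \frac{2^{\#\mathcal{A}_r}}{M_1 \cdots M_{h(r)+1}}\cdot E(r), \qquad \log E(r) \;\le\; C\!\sum_{k=1}^{h(r)+1}\frac{1}{M_k}.
\end{equation*}
With $\ell_i = i^3$ and $q_i \ge C_q i^3$, the inner sum is at most $\sum_{i=1}^{s(r)} i^3/q_i + O(1) \le s(r)/C_q + O(1)$, where $s(r)$ is the block index containing $h(r)$. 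A straightforward calculation gives $\log(1/r) \asymp \sum_{i \le s(r)} \ell_i \log q_i \asymp s(r)^4 \log s(r)$, hence $s(r) \asymp \bigl(\log(1/r)/\log\log(1/r)\bigr)^{1/4}$. Choosing $C_q$ large enough (depending on $c$) therefore secures $\log E(r) \le c\bigl(\log(1/r)/\log\log(1/r)\bigr)^{1/4}$, i.e., $E(r) \le H(r)$. Combined with the general covering estimate $\lambda(B(x,r)) \le 4r M_1\cdots M_{h(r)+1} \cdot \max_I \lambda(I)$ and the choice of $\mathcal{N}$, this yields
\begin{equation*}
    \lambda(B(x,r)) \;\le\; 4r \cdot 2^{\#\mathcal{A}_r} E(r) \;\le\; 4 r \cdot g(r) \cdot H(r) \;=\; 4\varphi(r)
\end{equation*}
uniformly in $x\in K$, and Theorem \ref{Peres-Bishop-thm} completes the proof.

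The main obstacle is the precise bookkeeping that simultaneously pins down three quantities: the polynomial exponent (the choice $d=3$ is essentially forced, since it is exactly what makes $s(r) \asymp (\log(1/r)/\log\log(1/r))^{1/4}$ and so produces the specific exponent $1/4$ in $H(r)$); the multiplicative constant $C_q$ (chosen large to absorb the error factor $E(r)$ into the prescribed $H(r)$); and the refined choice of $\mathcal{N}$ so that $2^{\#\mathcal{A}_r}$ must now compensate the full ratio $\varphi(r)/(r H(r))$ rather than just $\varphi(r)/r$ as in Theorem \ref{main-theorem}. The strengthened hypothesis $rH(r)/\varphi(r) \to 0$ is precisely what legalizes this refined selection in Lemma \ref{lemmaN}.
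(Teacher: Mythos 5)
Your proposal is correct, and its overall architecture is the same as the paper's: take $d=3$ so that the block index satisfies $s(r)\asymp(\log(1/r)/\log\log(1/r))^{1/4}$, use nearly full digit sets off a sparse set $\mathcal{N}$ chosen by Lemma \ref{lemmaN} with $g(r)=\varphi(r)/(rH(r))$, absorb the loss factor $\prod_k(1-1/M_k)^{-1}$ into $H(r)$, and apply the mass distribution principle to the pointwise absolutely normal measure. The differences are in the details and are worth recording. The paper pins the primes between consecutive cubes via Dudek's theorem ($(n+J)^3\le q_n\le(n+J+1)^3$) and switches to the ``extreme'' equal-weight digit choice $a_n=M_n-3$ off $\mathcal{N}$, so that $\mu\ast\nu$ is the uniform Moran measure and the error is $\prod\gamma_n^{-1}\le Ce^{2s}$; you instead take $q_n\asymp C_q n^3$ with a large tunable constant $C_q$ (Bertrand's postulate suffices, no Dudek needed) and keep the non-uniform measure $\lambda=\mu\ast\nu$ from Theorem \ref{main-theorem}, whose maximal digit masses $1/(M_n-1)$ off $\mathcal{N}$ coincide numerically with the paper's equal weights, so your $E(r)$ is the same product controlled by $\sum_k 1/M_k\lesssim s(r)/C_q$. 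Your version buys something genuine: since the leading asymptotics of $s(r)$ do not depend on $C_q$, choosing $C_q\gtrsim 1/c$ makes the error exponent smaller than the \emph{given} $c$, so the argument works for an arbitrary constant $c>0$ in the hypothesis, whereas the paper's fixed construction produces one specific exponent constant that is implicitly identified with the $c$ of the statement. The only points to write out carefully are the (easy) existence of a strictly increasing prime sequence with $q_n\asymp C_q n^3$ and $q_1\ge 7$, and the observation that the constant in $s(r)\lesssim(\log(1/r)/\log\log(1/r))^{1/4}$ is independent of $C_q$ (it comes from $q_i\ge i^3$ and $\ell_i=i^3$); both are fine, so I see no gap.
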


 In order to create the desired set for Theorem \ref{main-theorem2}, we need to use following interesting result about the distribution of primes.

  \begin{thm}\cite[Theorem 1.1]{Dudek2016}\label{dudek}
    For all $ n \ge \exp(\exp(33.3))$, there is a prime between $ n^{3} $ and $ (n+1)^{3} $. 
\end{thm}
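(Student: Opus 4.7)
The plan is to prove the existence of a prime in $(n^3,(n+1)^3]$ for $n\ge\exp(\exp(33.3))$ by bounding the difference $\psi((n+1)^3)-\psi(n^3)$ of Chebyshev's function from below via the explicit Riemann--von Mangoldt formula and explicit control of the non-trivial zeros of $\zeta$. First I would reduce to $\psi$: writing $\theta(x)=\sum_{p\le x}\log p$, the discrepancy satisfies $\psi(x)-\theta(x)=\sum_{k\ge 2}\theta(x^{1/k})=O(\sqrt{x}\log x)$, so a genuine prime in the interval is guaranteed once
$$\psi((n+1)^3)-\psi(n^3) > C\sqrt{(n+1)^3}\,\log((n+1)^3)$$
for an explicit absolute $C$. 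Since the main gap $(n+1)^3-n^3=3n^2+3n+1$ is of order $n^2\gg n^{3/2}\log n$, the real task is to show that the $\psi$-error $|\psi(x)-x|$ is much smaller than $n^2$ at $x=n^3$ and $x=(n+1)^3$.

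Next I would invoke the truncated explicit formula
$$\psi_0(x)=x-\sum_{|\gamma|\le T}\frac{x^{\rho}}{\rho}+R(x,T),$$
where $R(x,T)$ is an explicit remainder and $\rho=\beta+i\gamma$ ranges over non-trivial zeros. I would split the zero-sum into two ranges. For $|\gamma|\le T_0$, I would use Platt's numerical verification of the Riemann Hypothesis up to a large explicit height $T_0$ so that every such zero satisfies $\beta=\tfrac12$, each contributing at most $\sqrt{x}/|\gamma|$. For $|\gamma|>T_0$, I would apply a Mossinghoff--Trudgian explicit zero-free region of the shape $\beta\le 1-1/(R_0\log|\gamma|)$ together with the classical Riemann--von Mangoldt density estimate for the number of zeros up to height $T$. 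Combining the two regimes yields an estimate of the form
$$|\psi(x)-x|\le c_1\sqrt{x}(\log x)^2+c_2\,x\exp\!\Bigl(-c_3\sqrt{\tfrac{\log x}{\log\log x}}\Bigr)$$
valid for all sufficiently large $x$, with numerically explicit constants $c_1,c_2,c_3$.

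Finally I would compare the two estimates. The inequality
$$(n+1)^3-n^3 > |\psi((n+1)^3)-(n+1)^3|+|\psi(n^3)-n^3|+C\sqrt{(n+1)^3}\log((n+1)^3)$$
reduces, after inserting the bound above at $x\asymp n^3$, to a condition essentially of the form $n^2>c_2 n^3\exp\!\bigl(-c_3\sqrt{3\log n/\log(3\log n)}\bigr)$. Taking logarithms twice this becomes $\log\log n\lesssim c_3^{2}/\log n$ up to explicit lower-order terms, and solving this with the optimal admissible constants produces the threshold $n\ge \exp(\exp(33.3))$, from which the prime in $(n^3,(n+1)^3]$ follows.

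The main obstacle will be the simultaneous optimization of every explicit constant so that the final cutoff is exactly $\exp(\exp(33.3))$ rather than some larger double exponential. This involves (i) taking $T_0$ as large as current numerical RH verifications allow, (ii) plugging in the sharpest known explicit zero-free region for $|\gamma|>T_0$, and (iii) handling the truncation error $R(x,T)$ and the prime-power correction $\psi-\theta$ with tight constants. No genuinely new idea is required, but the numerical calibration is delicate and is where the bulk of the work in Dudek's argument lies.
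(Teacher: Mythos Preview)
This theorem is not proved in the paper at all. It is quoted verbatim from \cite{Dudek2016} as a black box, and the paper immediately applies it to produce the sequence of primes $q_n$ satisfying $(n+J)^3\le q_n\le (n+J+1)^3$ used in the proof of Theorem~\ref{main-theorem2}. There is nothing to compare your proposal against: the paper simply cites the result.

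That said, your sketch is a reasonable outline of how results of this type are actually established, and is broadly in the spirit of Dudek's argument (explicit formula, numerically verified RH up to a height, explicit zero-free region beyond that). One remark: the inequality you arrive at after taking logarithms, ``$\log\log n\lesssim c_3^{2}/\log n$'', is written backwards and does not make sense as stated; the actual comparison is that $\log n$ must be large enough that the exponential factor $\exp(-c_3\sqrt{\log x/\log\log x})$ beats $x^{-1/3}$, which after two logarithms gives a threshold of the form $\log\log n\ge C$ for an explicit $C$. But since the paper does not attempt any of this, the point is moot here.
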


In the same paper, there is also a version that is valid for all $n\ge 1$ by changing $n^3$ to $n^m$ for some very large $m$. We notice that whether there are always primes between $n^2$ and $(n+1)^2$ is still an open problem.  We now let $ \{ q_n \}_n $ be a subsequence of primes such that 
\begin{equation}\label{eq-J3}
(n+J)^3\le q_n\le (n+J+1)^3, \ \mbox{where} \ J = \exp(\exp(33.3)).
\end{equation}
In the following, we will write $a_n = \Theta(b_n)$ if there exists $C,c>0$ such that $cb_n\le a_n\le C b_n$ for all $n\in\N$. With this choice of $q_n$, we have $\ell_s = s^3$, and  
$$
h(r) = L_s+j  = \Theta( s^4). 
$$
 On the other hand, (\ref{def-h(r)}) shows that 
$$
\sum_{k=1}^s k^3\log q_k+(j+1) \log q_{s+1}< -\log r\le \sum_{k=1}^s k^3\log q_k+j \log q_{s+1}.
$$
Note that $q_n$ satisfies (\ref{eq-J3}) and $\sum_{k=1}^s k^3\log k = \Theta(s^4\log s)$, the above shows that 
\begin{equation}\label{eq-asymp-h-0}
\log (1/r) = \Theta( s^4 \log s) = \Theta\left(h(r)\cdot \log (h(r))\right).
\end{equation}
Let $H(x) = x\log x$ and $W(x)$ be the Lambert $W$-function, i.e. the unique function $W(x)$ such that  
$$
W(x)e^{W(x)} = x ,~~\forall x>0.
$$
It is not hard to see that $H^{-1}(y) = e^{W(y)} = \frac{y}{W(y)}$ for $ y > 0 $. Next, we claim that
\begin{equation}\label{eq-asymp-h}
    h(r) =\Theta\left(\frac{\log (1/r)}{\log\log(1/r))} \right).
\end{equation}
To see (\ref{eq-asymp-h}), by (\ref{eq-asymp-h-0}), there exist $ c, C > 0 $ such that 
$$ c \cdot \log(1/r) \le h(r) \cdot \log( h(r) ) \le C \cdot \log (1/r). $$
Since $ H^{-1} $ is increasing, it follows that
$$ H^{-1}( c \cdot \log(1/r) ) \le h(r) \le H^{-1} ( C \cdot \log(1/r) ). $$
By applying $ H^{-1}(y) = \dfrac{y}{W(y)} $, we obtain 
\begin{equation}\label{W(C)}
    \dfrac{ c \cdot \log(1/r) }{ W( c \cdot \log(1/r) ) } \le h(r) \le \dfrac{ C \cdot \log(1/r) }{ W( C \cdot \log(1/r) ) }.
\end{equation}
The asymptotic behavior of the Lambert W-function has been well studied to be $W(x) = \log x-\log\log x+ O(\log\log x/\log x)$ (see \cite[p. 25]{deBruijn}). Applying this to (\ref{W(C)}), we obtain (\ref{eq-asymp-h}).

We are now ready to prove Theorem \ref{main-theorem2}.  In the same argument as in the previous proof in Theorem \ref{main-theorem0}, we are going to choose the appropriate $ \{ a_n \}_{n \ge 1} $. Let ${\mathcal N}$ be a sparse subset of integers to be determined.   This time, we need to make an extreme choice of $ \{ a_n \}_{ n \ge 1 } $ by
$$
a_n = \begin{cases}
          2 \cdot [M_n / 4]  , & n \in \mathcal{N} \\
          M_n-3, & n \notin \mathcal{N}.
          \end{cases}
$$
(Note that $M_n$ is odd, so the largest choice of $a_n$ for every element in ${\mathcal D}_n+{\mathcal E}_n$ to be distinct is $ M_n - 3 $). Let $\gamma_n = 1- 1 / M_n$ so that $ a_n + 2 \le \gamma_n M_n$ for all $ n $ (since $ M_n \ge 7 $). Note that $\eta = \mu\ast \nu$ is the equal weight Moran measure assigning equal mass to all $k^{\rm th}$ generation basic intervals. Hence, similar to (\ref{cal-esti}), $\eta(B(x,r))$ is at most
\begin{equation}\label{eq-ball-main-2}
\begin{aligned}
  & 2 \left( \left[ r/( M_1 \cdots M_{h(r) + 1} )^{-1} \right]  + 1 \right) \cdot \prod_{ k=1, k\not\in {\mathcal N}}^{h(r)+1} \dfrac{1}{\gamma_k M_k}\cdot \prod_{k= 1, k\in{\mathcal N}}^{ h(r) + 1 } \dfrac{1}{ 2 \cdot [ M_k / 4 ] + 2 } \\ & \le   \frac{4 r}{\gamma_1\cdots \gamma_{h(r)+1}} \cdot 2^{\#\A_r }.  \\
        \end{aligned}
\end{equation}
  Since $q_n$ satisfies (\ref{eq-J3}), we have the following estimate for the product of $\gamma_n$ over $n=1,\cdots , h(r)+1$, 
$$
\prod_{n=1}^{h(r)+1} \gamma_n \ge \prod_{j=1}^{s+1} \left(1-\frac{1}{(j+J)^3}\right)^{j^3}. 
$$
For all $ 0 < \varepsilon < 1 $, we can find a large integer $j_0$ such that all $j>j_0$, we have  $\left(1-\frac{1}{(j+J)^3}\right)^{j^3} > e^{-(1 + \varepsilon)}$. Hence, there exist $C,c>0$, independent of $r$ such that 
$$
\prod_{n=1}^{h(r)+1} \gamma_n \ge C e^{-2s} \ge C \exp\left({-c \left(\frac{\log (1/r)}{\log\log(1/r))}\right)^{\frac{1}{4}}}\right)
$$
by $h(r) = \Theta (s^4)$ and (\ref{eq-asymp-h}). Putting back to (\ref{eq-ball-main-2}),  we have 
\begin{equation*}
    \eta(B(x,r)) \le \left( 4C^{-1} \cdot \frac{rH(r)}{\varphi(r)}\cdot 2^{\#\A_r}\right)  \varphi(r).
\end{equation*}
Hence, by the assumption of the theorem and Lemma \ref{lemmaN} we can choose ${\mathcal N}$ so that the above parenthesis is of $O(1)$. Hence, ${\mathcal H}^{\varphi}(K)>0$. Similar to the proof of Theorem \ref{main-theorem0}, $K$ is a set of uniqueness and $\eta$ is pointwise absolutely normal, which means that ${\mathcal H}^{\varphi}(K\cap {\mathbb A})>0$. The proof is complete.

\section{Discussions and open problems}\label{sec-7}

The main result of this paper established a special class of Moran sets, that forms a set of uniqueness as well as it supports pointwise absolutely normal measures.  It raises questions to decide if these concepts are true in more general class Moran sets in (\ref{eq_Moran}) with mixed integer bases and integer digit sets.

Concerning sets of uniqueness,  we have never found any Moran sets in (\ref{eq_Moran}) that can support a Rajchman measure. It is not hard to see that if $\#\D_n = 2$ for all $n\ge 1$, the resulting Moran set is a set of uniqueness.  We conjecture that all of them are indeed sets of uniqueness. 

\smallskip

{\bf Conjecture:} Suppose that the Moran set in (\ref{eq_Moran}) has Lebesgue measure zero. Then it must be a set of uniqueness.

\smallskip

 There have also been some classical studies of determining whether certain Moran Cantor sets is a set of uniqueness (see Meyer \cite{Meyer}). In the old terminologies, they considered symmetric Cantor sets with splitting into two subintervals in each iteration under contraction ratios defined by a sequence of dissection ratios $(\xi_n)_{n=1}^{\infty}$ such that $0<\xi_n<1/2$. In \cite[Chapter VIII]{Meyer}, Meyer showed that symmetric Cantor sets with $\sum_{n=1}^{\infty}\xi_i^2<\infty$ are always sets of uniqueness.  No satisfactory answers have been provided since then. Our results shed some lights on this problem.  Further discussion of symmetric Cantor sets can be found in \cite{Kechris}. 


\smallskip

Concerning supporting a pointwise absolutely normal measure, the following is an interesting question: 

\smallskip

{\bf (Qu 1):} Can we give a classification on the mixed integer bases $ \{M_n\}_n $, so that there exists some $\{ {\mathcal D}_n \}_n$, for which the Moran set (\ref{eq_Moran}) fully supports a pointwise absolutely normal measure?

\smallskip

Our main result demonstrated an explicit example in which $ \{ M_n \}_n $ is a sequence of infinitely many prime numbers with polynomial growth. It is not hard to see that if $ \{ M_n \}_n $ is periodic, it cannot be a pointwise absolutely normal measure. From this evidence, it is possible that a characterization based on the number of primes appeared in $ \{M_n\}_n $ can exist.

Finally, our paper demonstrated the existence of an absolutely normal set of uniqueness with positive Hausdorff measure, for which the gauge function can be made arbitrarily close to $ r $. However, we have not settled the largeness problem for every gauge function. 

\smallskip

{\bf (Qu 2):} For all gauge functions $ \varphi $ with $r/\varphi(r)\to 0$ as $r\to 0$, does there exist a set of uniqueness $K$ such that ${\mathcal H}^{\varphi}(K\cap {\mathbb A})>0$? 

\smallskip

In the dual direction, the question can also be asked for sets of multiplicity. 

\smallskip

{\bf (Qu 3):} For all gauge functions $ \varphi $ with $ \varphi(r) \to 0 $ as $ r \to 0 $, does there exist an absolutely normal set of multiplicity $K$ such that ${\mathcal H}^{\varphi}(K)<\infty$ and it supports a pointwise absolutely normal measure? 

\smallskip

By considering diophantine approximation with arbitrarily fast rate of approximations, Fraser and Nyugen recently showed that there exist arbitrarily thin sets that supports Rajchman measures (\cite[Theorem 2.6]{FN2025}) and hence they are sets of multiplicity. However, we do not expect these measures can have a sufficiently fast decay to verify the ${\mathsf{DEL}}$ criterion. Thus, we do not know if it supports pointwise absolutely normal measures. {\bf (Qu 3)} thus remains open. 

\smallskip

{\noindent \bf Acknowledgments}.The authors would like to thank Professor Malabika Pramanik for bringing up interesting questions that lead to this project. The project was initiated when C.-K Lai was visiting The Chinese University of Hong Kong. He would like to thank Professor De-Jun Feng for his hospitality. Both authors also thank De-Jun Feng for many useful discussions and suggestions over the paper. Chun-Kit Lai is partially supported by the AMS-Simons Research Enhancement Grants for Primarily Undergraduate Institution (PUI) Faculty.

\bibliography{references}{}
\bibliographystyle{plain}
\end{document}